\newtheorem{theorem}{Theorem}[section]
\newtheorem{lemma}{Lemma}[section]
\theoremstyle{remark}
\newtheorem{remark}{Remark}[section]
  \let\fiverm\fivrm
\def\@picture(#1,#2)(#3,#4){%
  \@picht #2\unitlength
  \setbox\@picbox\hbox to #1\unitlength\bgroup
  \let\endpicture=\!latexendpicture
  \let\frame=\!latexframe
  \let\linethickness=\!latexlinethickness
  \let\multiput=\!latexmultiput
  \let\put=\!latexput
  \hskip -#3\unitlength \lower #4\unitlength \hbox\bgroup}
\def\PiC{P\kern-.12em\lower.5ex\hbox{I}\kern-.075emC}
\def\PiCTeX{\PiC\kern-.11em\TeX}
\def\!ifnextchar#1#2#3{%
  \let\!testchar=#1%
  \def\!first{#2}%
  \def\!second{#3}%
  \futurelet\!nextchar\!testnext}
\def\!testnext{%
  \ifx \!nextchar \!spacetoken 
    \let\!next=\!skipspacetestagain
  \else
    \ifx \!nextchar \!testchar
      \let\!next=\!first
    \else 
      \let\!next=\!second 
    \fi 
  \fi
  \!next}
\def\\{\!skipspacetestagain} 
\def\\ {\futurelet\!nextchar\!testnext} 
\def\\{\let\!spacetoken= } \\  
\def\!tfor#1:=#2\do#3{%
  \edef\!fortemp{#2}%
  \ifx\!fortemp\!empty 
    \else
    \!tforloop#2\!nil\!nil\!!#1{#3}%
  \fi}
\def\!tforloop#1#2\!!#3#4{%
  \def#3{#1}%
  \ifx #3\!nnil
    \let\!nextwhile=\!fornoop
  \else
    #4\relax
    \let\!nextwhile=\!tforloop
  \fi 
  \!nextwhile#2\!!#3{#4}}
\def\!etfor#1:=#2\do#3{%
  \def\!!tfor{\!tfor#1:=}%
  \edef\!!!tfor{#2}%
  \expandafter\!!tfor\!!!tfor\do{#3}}
\def\!cfor#1:=#2\do#3{%
  \edef\!fortemp{#2}%
  \ifx\!fortemp\!empty 
  \else
    \!cforloop#2,\!nil,\!nil\!!#1{#3}%
  \fi}
\def\!cforloop#1,#2\!!#3#4{%
  \def#3{#1}%
  \ifx #3\!nnil
    \let\!nextwhile=\!fornoop 
  \else
    #4\relax
    \let\!nextwhile=\!cforloop
  \fi
  \!nextwhile#2\!!#3{#4}}
\def\!ecfor#1:=#2\do#3{%
  \def\!!cfor{\!cfor#1:=}%
  \edef\!!!cfor{#2}%
  \expandafter\!!cfor\!!!cfor\do{#3}}
\def\!empty{}
\def\!nnil{\!nil}
\def\!fornoop#1\!!#2#3{}
\def\!ifempty#1#2#3{%
  \edef\!emptyarg{#1}%
  \ifx\!emptyarg\!empty
    #2%
  \else
    #3%
  \fi}
\def\!getnext#1\from#2{%
  \expandafter\!gnext#2\!#1#2}%
\def\!gnext\\#1#2\!#3#4{%
  \def#3{#1}%
  \def#4{#2\\{#1}}%
  \ignorespaces}
\def\!getnextvalueof#1\from#2{%
  \expandafter\!gnextv#2\!#1#2}%
\def\!gnextv\\#1#2\!#3#4{%
  #3=#1%
  \def#4{#2\\{#1}}%
  \ignorespaces}
\def\!copylist#1\to#2{%
  \expandafter\!!copylist#1\!#2}
\def\!!copylist#1\!#2{%
  \def#2{#1}\ignorespaces}
\def\!wlet#1=#2{%
  \let#1=#2 
  \wlog{\string#1=\string#2}}
\def\!listaddon#1#2{%
  \expandafter\!!listaddon#2\!{#1}#2}
\def\!!listaddon#1\!#2#3{%
  \def#3{#1\\#2}}
\def\!rightappend#1\withCS#2\to#3{\expandafter\!!rightappend#3\!#2{#1}#3}
\def\!!rightappend#1\!#2#3#4{\def#4{#1#2{#3}}}
\def\!leftappend#1\withCS#2\to#3{\expandafter\!!leftappend#3\!#2{#1}#3}
\def\!!leftappend#1\!#2#3#4{\def#4{#2{#3}#1}}
\def\!lop#1\to#2{\expandafter\!!lop#1\!#1#2}
\def\!!lop\\#1#2\!#3#4{\def#4{#1}\def#3{#2}}
\def\!loop#1\repeat{\def\!body{#1}\!iterate}
\def\!iterate{\!body\let\!next=\!iterate\else\let\!next=\relax\fi\!next}
\def\!!loop#1\repeat{\def\!!body{#1}\!!iterate}
\def\!!iterate{\!!body\let\!!next=\!!iterate\else\let\!!next=\relax\fi\!!next}
\def\!removept#1#2{\edef#2{\expandafter\!!removePT\the#1}}
{\catcode`p=12 \catcode`t=12 \gdef\!!removePT#1pt{#1}}
\def\placevalueinpts of <#1> in #2 {%
  \!removept{#1}{#2}}
\def\!mlap#1{\hbox to 0pt{\hss#1\hss}}
\def\!vmlap#1{\vbox to 0pt{\vss#1\vss}}
\def\!not#1{%
  #1\relax
    \!switchfalse
  \else
    \!switchtrue
  \fi
  \if!switch
  \ignorespaces}
\def\wlog#1{}    
\newdimen\headingtoplotskip     
\newdimen\linethickness         
\newdimen\longticklength        
\newdimen\plotsymbolspacing     
\newdimen\shortticklength       
\newdimen\stackleading          
\newdimen\tickstovaluesleading  
\newdimen\totalarclength        
\newdimen\valuestolabelleading  
\newbox\!boxA                   
\newbox\!boxB                   
\newbox\!picbox                 
\newbox\!plotsymbol             
\newbox\!putobject              
\newbox\!shadesymbol            
\newdimen\!Xleft                
\newdimen\!Xright               
\newdimen\!Xsave                
\newdimen\!Ybot                 
\newdimen\!Ysave                
\newdimen\!Ytop                 
\newdimen\!angle                
\newdimen\!arclength            
\newdimen\!areabloc             
\newdimen\!arealloc             
\newdimen\!arearloc             
\newdimen\!areatloc             
\newdimen\!bshrinkage           
\newdimen\!checkbot             
\newdimen\!checkleft            
\newdimen\!checkright           
\newdimen\!checktop             
\newdimen\!dimenA               
\newdimen\!dimenB               
\newdimen\!dimenC               
\newdimen\!dimenD               
\newdimen\!dimenE               
\newdimen\!dimenF               
\newdimen\!dimenG               
\newdimen\!dimenH               
\newdimen\!dimenI               
\newdimen\!distacross           
\newdimen\!downlength           
\newdimen\!dp                   
\newdimen\!dshade               
\newdimen\!dxpos                
\newdimen\!dxprime              
\newdimen\!dypos                
\newdimen\!dyprime              
\newdimen\!ht                   
\newdimen\!leaderlength         
\newdimen\!lshrinkage           
\newdimen\!midarclength         
\newdimen\!offset               
\newdimen\!plotheadingoffset    
\newdimen\!plotsymbolxshift     
\newdimen\!plotsymbolyshift     
\newdimen\!plotxorigin          
\newdimen\!plotyorigin          
\newdimen\!rootten              
\newdimen\!rshrinkage           
\newdimen\!shadesymbolxshift    
\newdimen\!shadesymbolyshift    
\newdimen\!tenAa                
\newdimen\!tenAc                
\newdimen\!tenAe                
\newdimen\!tshrinkage           
\newdimen\!uplength             
\newdimen\!wd                   
\newdimen\!wmax                 
\newdimen\!wmin                 
\newdimen\!xB                   
\newdimen\!xC                   
\newdimen\!xE                   
\newdimen\!xM                   
\newdimen\!xS                   
\newdimen\!xaxislength          
\newdimen\!xdiff                
\newdimen\!xleft                
\newdimen\!xloc                 
\newdimen\!xorigin              
\newdimen\!xpivot               
\newdimen\!xpos                 
\newdimen\!xprime               
\newdimen\!xright               
\newdimen\!xshade               
\newdimen\!xshift               
\newdimen\!xtemp                
\newdimen\!xunit                
\newdimen\!xxE                  
\newdimen\!xxM                  
\newdimen\!xxS                  
\newdimen\!xxloc                
\newdimen\!yB                   
\newdimen\!yC                   
\newdimen\!yE                   
\newdimen\!yM                   
\newdimen\!yS                   
\newdimen\!yaxislength          
\newdimen\!ybot                 
\newdimen\!ydiff                
\newdimen\!yloc                 
\newdimen\!yorigin              
\newdimen\!ypivot               
\newdimen\!ypos                 
\newdimen\!yprime               
\newdimen\!yshade               
\newdimen\!yshift               
\newdimen\!ytemp                
\newdimen\!ytop                 
\newdimen\!yunit                
\newdimen\!yyE                  
\newdimen\!yyM                  
\newdimen\!yyS                  
\newdimen\!yyloc                
\newdimen\!zpt                  
\newif\if!axisvisible           
\newif\if!gridlinestoo          
\newif\if!keepPO                
\newif\if!placeaxislabel        
\newif\if!switch                
\newif\if!xswitch               
\newtoks\!axisLaBeL             
\newtoks\!keywordtoks           
\newwrite\!replotfile           
\def\!cosrotationangle{1}      
\def\!sinrotationangle{0}      
\def\!xpivotcoord{0}           
\def\!xref{0}                  
\def\!xshadesave{0}            
\def\!ypivotcoord{0}           
\def\!yref{0}                  
\def\!yshadesave{0}            
\def\!zero{0}                  
\let\wlog=\!!!wlog
\def\normalgraphs{%
  \longticklength=.4\baselineskip
  \shortticklength=.25\baselineskip
  \tickstovaluesleading=.25\baselineskip
  \valuestolabelleading=.8\baselineskip
  \linethickness=.4pt
  \stackleading=.17\baselineskip
  \headingtoplotskip=1.5\baselineskip
  \visibleaxes
  \ticksout
  \nogridlines
  \unloggedticks}
\def\setplotarea x from #1 to #2, y from #3 to #4 {%
  \!arealloc=\!M{#1}\!xunit \advance \!arealloc -\!xorigin
  \!areabloc=\!M{#3}\!yunit \advance \!areabloc -\!yorigin
  \!arearloc=\!M{#2}\!xunit \advance \!arearloc -\!xorigin
  \!areatloc=\!M{#4}\!yunit \advance \!areatloc -\!yorigin
  \!initinboundscheck
  \!xaxislength=\!arearloc  \advance\!xaxislength -\!arealloc
  \!yaxislength=\!areatloc  \advance\!yaxislength -\!areabloc
  \!plotheadingoffset=\!zpt
  \!dimenput {{\setbox0=\hbox{}\wd0=\!xaxislength\ht0=\!yaxislength\box0}}
     [bl] (\!arealloc,\!areabloc)}
\def\visibleaxes{%
  \def\!axisvisibility{\!axisvisibletrue}}
\def\!fixkeyword#1{%
  \errhelp=\!keywordhelp
  \errmessage{Unrecognized keyword `#1': \the\!keywordtoks{NEW KEYWORD}'}}
\def\fixkeyword#1{%
  \!nextkeyword#1 }
\def\axis {%
  \def\!nextkeyword##1 {%
    \expandafter\ifx\csname !axis##1\endcsname \relax
      \def\!next{\!fixkeyword{##1}}%
    \else
      \def\!next{\csname !axis##1\endcsname}%
    \fi
    \!next}%
  \!offset=\!zpt
  \!axisvisibility
  \!placeaxislabelfalse
  \!nextkeyword}
\def\!axisbottom{%
  \!axisylevel=\!areabloc
  \def\!tickxsign{0}%
  \def\!tickysign{-}%
  \def\!axissetup{\!axisxsetup}%
  \def\!axislabeltbrl{t}%
  \!nextkeyword}
\def\!axistop{%
  \!axisylevel=\!areatloc
  \def\!tickxsign{0}%
  \def\!tickysign{+}%
  \def\!axissetup{\!axisxsetup}%
  \def\!axislabeltbrl{b}%
  \!nextkeyword}
\def\!axisleft{%
  \!axisxlevel=\!arealloc
  \def\!tickxsign{-}%
  \def\!tickysign{0}%
  \def\!axissetup{\!axisysetup}%
  \def\!axislabeltbrl{r}%
  \!nextkeyword}
\def\!axisright{%
  \!axisxlevel=\!arearloc
  \def\!tickxsign{+}%
  \def\!tickysign{0}%
  \def\!axissetup{\!axisysetup}%
  \def\!axislabeltbrl{l}%
  \!nextkeyword}
\def\!axisshiftedto#1=#2 {%
  \if 0\!tickxsign
    \!axisylevel=\!M{#2}\!yunit
    \advance\!axisylevel -\!yorigin
  \else
    \!axisxlevel=\!M{#2}\!xunit
    \advance\!axisxlevel -\!xorigin
  \fi
  \!nextkeyword}
\def\!axisvisible{%
  \!axisvisibletrue  
  \!nextkeyword}
\def\!axisinvisible{%
  \!axisvisiblefalse
  \!nextkeyword}
\def\!axislabel#1 {%
  \!axisLaBeL={#1}%
  \!placeaxislabeltrue
  \!nextkeyword}
\def\csname !axis/\endcsname{%
  \!axissetup 
  \if!placeaxislabel
    \!placeaxislabel
  \fi
  \if +\!tickysign 
    \!dimenA=\!axisylevel
    \advance\!dimenA \!offset 
    \advance\!dimenA -\!areatloc 
    \ifdim \!dimenA>\!plotheadingoffset
      \!plotheadingoffset=\!dimenA 
    \fi
  \fi}
\def\grid #1 #2 {%
  \!countA=#1\advance\!countA 1
  \axis bottom invisible ticks length <\!zpt> andacross quantity {\!countA} /
  \!countA=#2\advance\!countA 1
  \axis left   invisible ticks length <\!zpt> andacross quantity {\!countA} / }
\def\plotheading#1 {%
  \advance\!plotheadingoffset \headingtoplotskip
  \!dimenput {#1} [B] <.5\!xaxislength,\!plotheadingoffset>
    (\!arealloc,\!areatloc)}
\def\!axisxsetup{%
  \!axisxlevel=\!arealloc
  \!axisstart=\!arealloc
  \!axisend=\!arearloc
  \!axisLength=\!xaxislength
  \!!origin=\!xorigin
  \!!unit=\!xunit
  \!xswitchtrue
  \if!axisvisible 
    \!makeaxis
  \fi}
\def\!axisysetup{%
  \!axisylevel=\!areabloc
  \!axisstart=\!areabloc
  \!axisend=\!areatloc
  \!axisLength=\!yaxislength
  \!!origin=\!yorigin
  \!!unit=\!yunit
  \!xswitchfalse
  \if!axisvisible
    \!makeaxis
  \fi}
\def\!makeaxis{%
  \setbox\!boxA=\hbox{
    \beginpicture
      \!setdimenmode
      \setcoordinatesystem point at {\!zpt} {\!zpt}   
      \putrule from {\!zpt} {\!zpt} to
        {\!tickysign\!tickysign\!axisLength} 
        {\!tickxsign\!tickxsign\!axisLength}
    \endpicturesave <\!Xsave,\!Ysave>}%
    \wd\!boxA=\!zpt
    \!placetick\!axisstart}
\def\!placeaxislabel{%
  \advance\!offset \valuestolabelleading
  \if!xswitch
    \!dimenput {\the\!axisLaBeL} [\!axislabeltbrl]
      <.5\!axisLength,\!tickysign\!offset> (\!axisxlevel,\!axisylevel)
    \advance\!offset \!dp  
    \advance\!offset \!ht  
  \else
    \!dimenput {\the\!axisLaBeL} [\!axislabeltbrl]
      <\!tickxsign\!offset,.5\!axisLength> (\!axisxlevel,\!axisylevel)
  \fi
  \!axisLaBeL={}}
\def\arrow <#1> [#2,#3]{%
  \!ifnextchar<{\!arrow{#1}{#2}{#3}}{\!arrow{#1}{#2}{#3}<\!zpt,\!zpt> }}
\def\!arrow#1#2#3<#4,#5> from #6 #7 to #8 #9 {%
%
  \!xloc=\!M{#8}\!xunit   
  \!yloc=\!M{#9}\!yunit
  \!dxpos=\!xloc  \!dimenA=\!M{#6}\!xunit  \advance \!dxpos -\!dimenA
  \!dypos=\!yloc  \!dimenA=\!M{#7}\!yunit  \advance \!dypos -\!dimenA
  \let\!MAH=\!M
  \!setdimenmode
  \!xshift=#4\relax  \!yshift=#5\relax
  \!reverserotateonly\!xshift\!yshift
  \advance\!xshift\!xloc  \advance\!yshift\!yloc
%
  \!xS=-\!dxpos  \advance\!xS\!xshift
  \!yS=-\!dypos  \advance\!yS\!yshift
  \!start (\!xS,\!yS)
  \!ljoin (\!xshift,\!yshift)
%
  \!Pythag\!dxpos\!dypos\!arclength
  \!divide\!dxpos\!arclength\!dxpos  
  \!dxpos=32\!dxpos  \!removept\!dxpos\!!cos
  \!divide\!dypos\!arclength\!dypos  
  \!dypos=32\!dypos  \!removept\!dypos\!!sin
%
  \!halfhead{#1}{#2}{#3}
  \!halfhead{#1}{-#2}{-#3}
  \let\!M=\!MAH
  \ignorespaces}
  \def\!halfhead#1#2#3{%
    \!dimenC=-#1%
    \divide \!dimenC 2 
    \!dimenD=#2\!dimenC
    \!rotate(\!dimenC,\!dimenD)by(\!!cos,\!!sin)to(\!xM,\!yM)
    \!dimenC=-#1
    \!dimenD=#3\!dimenC
    \!dimenD=.5\!dimenD
    \!rotate(\!dimenC,\!dimenD)by(\!!cos,\!!sin)to(\!xE,\!yE)
    \!start (\!xshift,\!yshift)
    \advance\!xM\!xshift  \advance\!yM\!yshift
    \advance\!xE\!xshift  \advance\!yE\!yshift
    \!qjoin (\!xM,\!yM) (\!xE,\!yE) 
    \ignorespaces}
\def\betweenarrows #1#2 from #3 #4 to #5 #6 {%
  \!xloc=\!M{#3}\!xunit  \!xxloc=\!M{#5}\!xunit%
  \!yloc=\!M{#4}\!yunit  \!yyloc=\!M{#6}\!yunit%
  \!dxpos=\!xxloc  \advance\!dxpos by -\!xloc
  \!dypos=\!yyloc  \advance\!dypos by -\!yloc
  \advance\!xloc .5\!dxpos
  \advance\!yloc .5\!dypos
  \let\!MBA=\!M
  \!setdimenmode
  \ifdim\!dypos=\!zpt
    \ifdim\!dxpos<\!zpt \!dxpos=-\!dxpos \fi
    \put {\!lrarrows{\!dxpos}{#1}}#2{} at {\!xloc} {\!yloc}
  \else
    \ifdim\!dxpos=\!zpt
      \ifdim\!dypos<\!zpt \!dypos=-\!zpt \fi
      \put {\!udarrows{\!dypos}{#1}}#2{} at {\!xloc} {\!yloc}
    \fi
  \fi
  \let\!M=\!MBA
  \ignorespaces}
\def\!lrarrows#1#2{
  {\setbox\!boxA=\hbox{$\mkern-2mu\mathord-\mkern-2mu$}%
   \setbox\!boxB=\hbox{$\leftarrow$}\!dimenE=\ht\!boxB
   \setbox\!boxB=\hbox{}\ht\!boxB=2\!dimenE
   \hbox to #1{$\mathord\leftarrow\mkern-6mu
     \cleaders\copy\!boxA\hfil
     \mkern-6mu\mathord-$%
     \kern.4em $\vcenter{\box\!boxB}$$\vcenter{\hbox{#2}}$\kern.4em
     $\mathord-\mkern-6mu
     \cleaders\copy\!boxA\hfil
     \mkern-6mu\mathord\rightarrow$}}}
\def\!udarrows#1#2{
  {\setbox\!boxB=\hbox{#2}%
   \setbox\!boxA=\hbox to \wd\!boxB{\hss$\vert$\hss}%
   \!dimenE=\ht\!boxA \advance\!dimenE \dp\!boxA \divide\!dimenE 2
   \vbox to #1{\offinterlineskip
      \vskip .05556\!dimenE
      \hbox to \wd\!boxB{\hss$\mkern.4mu\uparrow$\hss}\vskip-\!dimenE
      \cleaders\copy\!boxA\vfil
      \vskip-\!dimenE\copy\!boxA
      \vskip\!dimenE\copy\!boxB\vskip.4em
      \copy\!boxA\vskip-\!dimenE
      \cleaders\copy\!boxA\vfil
      \vskip-\!dimenE \hbox to \wd\!boxB{\hss$\mkern.4mu\downarrow$\hss}
      \vskip .05556\!dimenE}}}
\def\putbar#1breadth <#2> from #3 #4 to #5 #6 {%
  \!xloc=\!M{#3}\!xunit  \!xxloc=\!M{#5}\!xunit%
  \!yloc=\!M{#4}\!yunit  \!yyloc=\!M{#6}\!yunit%
  \!dypos=\!yyloc  \advance\!dypos by -\!yloc
  \!dimenI=#2  
  \ifdim \!dimenI=\!zpt 
    \putrule#1from {#3} {#4} to {#5} {#6} 
  \else 
    \let\!MBar=\!M
    \!setdimenmode 
    \divide\!dimenI 2
    \ifdim \!dypos=\!zpt             
      \advance \!yloc -\!dimenI 
      \advance \!yyloc \!dimenI
    \else
      \advance \!xloc -\!dimenI 
      \advance \!xxloc \!dimenI
    \fi
    \putrectangle#1corners at {\!xloc} {\!yloc} and {\!xxloc} {\!yyloc}
    \let\!M=\!MBar 
  \fi
  \ignorespaces}
\def\setbars#1breadth <#2> baseline at #3 = #4 {%
  \edef\!barshift{#1}%
  \edef\!barbreadth{#2}%
  \edef\!barorientation{#3}%
  \edef\!barbaseline{#4}%
  \def\!bardobaselabel{\!bardoendlabel}%
  \def\!bardoendlabel{\!barfinish}%
  \let\!drawcurve=\!barcurve
  \!setbars}
\def\!setbars{%
  \futurelet\!nextchar\!!setbars}
\def\!!setbars{%
  \if b\!nextchar
    \def\!!!setbars{\!setbarsbget}%
  \else 
    \if e\!nextchar
      \def\!!!setbars{\!setbarseget}%
    \else
      \def\!!!setbars{\relax}%
    \fi
  \fi
  \!!!setbars}
\def\!setbarsbget baselabels (#1) {%
  \def\!barbaselabelorientation{#1}%
  \def\!bardobaselabel{\!!bardobaselabel}%
  \!setbars}
\def\!setbarseget endlabels (#1) {%
  \edef\!barendlabelorientation{#1}%
  \def\!bardoendlabel{\!!bardoendlabel}%
  \!setbars}
\def\!barcurve #1 #2 {%
  \if y\!barorientation
    \def\!basexarg{#1}%
    \def\!baseyarg{\!barbaseline}%
  \else
    \def\!basexarg{\!barbaseline}%
    \def\!baseyarg{#2}%
  \fi
  \expandafter\putbar\!barshift breadth <\!barbreadth> from {\!basexarg}
    {\!baseyarg} to {#1} {#2}
  \def\!endxarg{#1}%
  \def\!endyarg{#2}%
  \!bardobaselabel}
\def\!!bardobaselabel "#1" {%
  \put {#1}\!barbaselabelorientation{} at {\!basexarg} {\!baseyarg}
  \!bardoendlabel}
\def\!!bardoendlabel "#1" {%
  \put {#1}\!barendlabelorientation{} at {\!endxarg} {\!endyarg}
  \!barfinish}
\def\!barfinish{%
  \!ifnextchar/{\!finish}{\!barcurve}}
\def\putrectangle{%
  \!ifnextchar<{\!putrectangle}{\!putrectangle<\!zpt,\!zpt> }}
\def\!putrectangle<#1,#2> corners at #3 #4 and #5 #6 {%
%
  \!xone=\!M{#3}\!xunit  \!xtwo=\!M{#5}\!xunit%
  \!yone=\!M{#4}\!yunit  \!ytwo=\!M{#6}\!yunit%
  \ifdim \!xtwo<\!xone
    \!dimenI=\!xone  \!xone=\!xtwo  \!xtwo=\!dimenI
  \fi
  \ifdim \!ytwo<\!yone
    \!dimenI=\!yone  \!yone=\!ytwo  \!ytwo=\!dimenI
  \fi
  \!dimenI=#1\relax  \advance\!xone\!dimenI  \advance\!xtwo\!dimenI
  \!dimenI=#2\relax  \advance\!yone\!dimenI  \advance\!ytwo\!dimenI
  \let\!MRect=\!M
  \!setdimenmode
%
  \!shaderectangle
%
  \!dimenI=.5\linethickness
  \advance \!xone  -\!dimenI
  \advance \!xtwo   \!dimenI
  \putrule from {\!xone} {\!yone} to {\!xtwo} {\!yone} 
  \putrule from {\!xone} {\!ytwo} to {\!xtwo} {\!ytwo} 
%
  \advance \!xone   \!dimenI
  \advance \!xtwo  -\!dimenI%
  \advance \!yone  -\!dimenI
  \advance \!ytwo   \!dimenI
  \putrule from {\!xone} {\!yone} to {\!xone} {\!ytwo} 
  \putrule from {\!xtwo} {\!yone} to {\!xtwo} {\!ytwo} 
  \let\!M=\!MRect
  \ignorespaces}
\def\shaderectanglesoff{%
  \def\!shaderectangle{}%
  \ignorespaces}
\def\!!shaderectangle{%
  \!dimenA=\!xtwo  \advance \!dimenA -\!xone
  \!dimenB=\!ytwo  \advance \!dimenB -\!yone
  \ifdim \!dimenA<\!dimenB
    \!startvshade (\!xone,\!yone,\!ytwo)
    \!lshade      (\!xtwo,\!yone,\!ytwo)
  \else
    \!starthshade (\!yone,\!xone,\!xtwo)
    \!lshade      (\!ytwo,\!xone,\!xtwo)
  \fi
  \ignorespaces}
\def\frame{%
  \!ifnextchar<{\!frame}{\!frame<\!zpt> }}
\long\def\!frame<#1> #2{%
  \beginpicture
    \setcoordinatesystem units <1pt,1pt> point at 0 0 
    \put {#2} [Bl] at 0 0 
    \!dimenA=#1\relax
    \!dimenB=\!wd \advance \!dimenB \!dimenA
    \!dimenC=\!ht \advance \!dimenC \!dimenA
    \!dimenD=\!dp \advance \!dimenD \!dimenA
    \let\!MFr=\!M
    \!setdimenmode
    \putrectangle corners at {-\!dimenA} {-\!dimenD} and {\!dimenB} {\!dimenC}
    \!setcoordmode
    \let\!M=\!MFr
  \endpicture
  \ignorespaces}
\def\rectangle <#1> <#2> {%
  \setbox0=\hbox{}\wd0=#1\ht0=#2\frame {\box0}}
\def\plot{%
  \!ifnextchar"{\!plotfromfile}{\!drawcurve}}
\def\!plotfromfile"#1"{%
  \expandafter\!drawcurve \input #1 /}
\def\setquadratic{%
  \let\!drawcurve=\!qcurve
  \let\!!Shade=\!!qShade
  \let\!!!Shade=\!!!qShade}
\def\setlinear{%
  \let\!drawcurve=\!lcurve
  \let\!!Shade=\!!lShade
  \let\!!!Shade=\!!!lShade}
\def\sethistograms{%
  \let\!drawcurve=\!hcurve}
\def\!qcurve #1 #2 {%
  \!start (#1,#2)
  \!Qjoin}
\def\!Qjoin#1 #2 #3 #4 {%
  \!qjoin (#1,#2) (#3,#4)             
  \!ifnextchar/{\!finish}{\!Qjoin}}
\def\!lcurve #1 #2 {%
  \!start (#1,#2)
  \!Ljoin}
\def\!Ljoin#1 #2 {%
  \!ljoin (#1,#2)                    
  \!ifnextchar/{\!finish}{\!Ljoin}}
\def\!finish/{\ignorespaces}
\def\!hcurve #1 #2 {%
  \edef\!hxS{#1}%
  \edef\!hyS{#2}%
  \!hjoin}
\def\!hjoin#1 #2 {%
  \putrectangle corners at {\!hxS} {\!hyS} and {#1} {#2}
  \edef\!hxS{#1}%
  \!ifnextchar/{\!finish}{\!hjoin}}
\def\vshade #1 #2 #3 {%
  \!startvshade (#1,#2,#3)
  \!Shadewhat}
\def\hshade #1 #2 #3 {%
  \!starthshade (#1,#2,#3)
  \!Shadewhat}
\def\!Shadewhat{%
  \futurelet\!nextchar\!Shade}
\def\!Shade{%
  \if <\!nextchar
    \def\!nextShade{\!!Shade}%
  \else
    \if /\!nextchar
      \def\!nextShade{\!finish}%
    \else
      \def\!nextShade{\!!!Shade}%
    \fi
  \fi
  \!nextShade}
\def\!!lShade<#1> #2 #3 #4 {%
  \!lshade <#1> (#2,#3,#4)                 
  \!Shadewhat}
\def\!!!lShade#1 #2 #3 {%
  \!lshade (#1,#2,#3)
  \!Shadewhat} 
\def\!!qShade<#1> #2 #3 #4 #5 #6 #7 {%
  \!qshade <#1> (#2,#3,#4) (#5,#6,#7)      
  \!Shadewhat}
\def\!!!qShade#1 #2 #3 #4 #5 #6 {%
  \!qshade (#1,#2,#3) (#4,#5,#6)
  \!Shadewhat} 
\def\setdashpattern <#1>{%
  \def\!Flist{}\def\!Blist{}\def\!UDlist{}%
  \!countA=0
  \!ecfor\!item:=#1\do{%
    \!dimenA=\!item\relax
    \expandafter\!rightappend\the\!dimenA\withCS{\\}\to\!UDlist%
    \advance\!countA  1
    \ifodd\!countA
      \expandafter\!rightappend\the\!dimenA\withCS{\!Rule}\to\!Flist%
      \expandafter\!leftappend\the\!dimenA\withCS{\!Rule}\to\!Blist%
    \else 
      \expandafter\!rightappend\the\!dimenA\withCS{\!Skip}\to\!Flist%
      \expandafter\!leftappend\the\!dimenA\withCS{\!Skip}\to\!Blist%
    \fi}%
  \!leaderlength=\!zpt
  \def\!Rule##1{\advance\!leaderlength  ##1}%
  \def\!Skip##1{\advance\!leaderlength  ##1}%
  \!Flist%
  \ifdim\!leaderlength>\!zpt 
  \else
    \def\!Flist{\!Skip{24in}}\def\!Blist{\!Skip{24in}}\ignorespaces
    \def\!UDlist{\\{\!zpt}\\{24in}}\ignorespaces
    \!leaderlength=24in
  \fi
  \!dashingon}
\def\!dashingon{%
  \def\!advancedashing{\!!advancedashing}%
  \def\!drawlinearsegment{\!lineardashed}%
  \def\!puthline{\!putdashedhline}%
  \def\!putvline{\!putdashedvline}%
  \ignorespaces}%
\def\!dashingoff{%
  \def\!advancedashing{\relax}%
  \def\!drawlinearsegment{\!linearsolid}%
  \def\!puthline{\!putsolidhline}%
  \def\!putvline{\!putsolidvline}%
  \ignorespaces}
\def\setdots{%
  \!ifnextchar<{\!setdots}{\!setdots<5pt>}}
\def\!setdots<#1>{%
  \!dimenB=#1\advance\!dimenB -\plotsymbolspacing
  \ifdim\!dimenB<\!zpt
    \!dimenB=\!zpt
  \fi
\setdashpattern <\plotsymbolspacing,\!dimenB>}
\def\setdotsnear <#1> for <#2>{%
  \!dimenB=#2\relax  \advance\!dimenB -.05pt  
  \!dimenC=#1\relax  \!countA=\!dimenC 
  \!dimenD=\!dimenB  \advance\!dimenD .5\!dimenC  \!countB=\!dimenD
  \divide \!countB  \!countA
  \ifnum 1>\!countB 
    \!countB=1
  \fi
  \divide\!dimenB  \!countB
  \setdots <\!dimenB>}
\def\setdashes{%
  \!ifnextchar<{\!setdashes}{\!setdashes<5pt>}}
\def\!setdashes<#1>{\setdashpattern <#1,#1>}
\def\setdashesnear <#1> for <#2>{%
  \!dimenB=#2\relax  
  \!dimenC=#1\relax  \!countA=\!dimenC 
  \!dimenD=\!dimenB  \advance\!dimenD .5\!dimenC  \!countB=\!dimenD
  \divide \!countB  \!countA
  \ifodd \!countB 
  \else 
    \advance \!countB  1
  \fi
  \divide\!dimenB  \!countB
  \setdashes <\!dimenB>}
\def\setsolid{%
  \def\!Flist{\!Rule{24in}}\def\!Blist{\!Rule{24in}}%
  \def\!UDlist{\\{24in}\\{\!zpt}}%
  \!dashingoff}  
\def\!divide#1#2#3{%
  \!dimenB=#1
  \!dimenC=#2
  \!dimenD=\!dimenB
  \divide \!dimenD \!dimenC
  \!dimenA=\!dimenD
  \multiply\!dimenD \!dimenC
  \advance\!dimenB -\!dimenD
  \!dimenD=\!dimenC
    \ifdim\!dimenD<\!zpt \!dimenD=-\!dimenD 
  \fi
  \ifdim\!dimenD<64pt
    \!divstep[\!tfs]\!divstep[\!tfs]%
  \else 
    \!!divide
  \fi
  #3=\!dimenA\ignorespaces}
\def\!!divide{%
  \ifdim\!dimenD<256pt
    \!divstep[64]\!divstep[32]\!divstep[32]%
  \else 
    \!divstep[8]\!divstep[8]\!divstep[8]\!divstep[8]\!divstep[8]%
    \!dimenA=2\!dimenA
  \fi}
\def\!divstep[#1]{
  \!dimenB=#1\!dimenB
  \!dimenD=\!dimenB
    \divide \!dimenD by \!dimenC
  \!dimenA=#1\!dimenA
    \advance\!dimenA by \!dimenD%
  \multiply\!dimenD by \!dimenC
    \advance\!dimenB by -\!dimenD}
\def\Divide <#1> by <#2> forming <#3> {%
  \!divide{#1}{#2}{#3}}
\def\ellipticalarc axes ratio #1:#2 #3 degrees from #4 #5 center at #6 #7 {%
  \!angle=#3pt\relax
  \ifdim\!angle>\!zpt 
    \def\!sign{}
  \else 
    \def\!sign{-}\!angle=-\!angle
  \fi
  \!xxloc=\!M{#6}\!xunit
  \!yyloc=\!M{#7}\!yunit     
  \!xxS=\!M{#4}\!xunit
  \!yyS=\!M{#5}\!yunit
  \advance\!xxS -\!xxloc
  \advance\!yyS -\!yyloc
  \!divide\!xxS{#1pt}\!xxS 
  \!divide\!yyS{#2pt}\!yyS 
  \let\!MC=\!M
  \!setdimenmode
  \!xS=#1\!xxS  \advance\!xS\!xxloc
  \!yS=#2\!yyS  \advance\!yS\!yyloc
  \!start (\!xS,\!yS)%
  \!loop\ifdim\!angle>14.9999pt
    \!rotate(\!xxS,\!yyS)by(\!cos,\!sign\!sin)to(\!xxM,\!yyM) 
    \!rotate(\!xxM,\!yyM)by(\!cos,\!sign\!sin)to(\!xxE,\!yyE)
    \!xM=#1\!xxM  \advance\!xM\!xxloc  \!yM=#2\!yyM  \advance\!yM\!yyloc
    \!xE=#1\!xxE  \advance\!xE\!xxloc  \!yE=#2\!yyE  \advance\!yE\!yyloc
    \!qjoin (\!xM,\!yM) (\!xE,\!yE)
    \!xxS=\!xxE  \!yyS=\!yyE 
    \advance \!angle -15pt
  \repeat
  \ifdim\!angle>\!zpt
    \!angle=100.53096\!angle
    \divide \!angle 360 
    \!sinandcos\!angle\!!sin\!!cos
    \!rotate(\!xxS,\!yyS)by(\!!cos,\!sign\!!sin)to(\!xxM,\!yyM) 
    \!rotate(\!xxM,\!yyM)by(\!!cos,\!sign\!!sin)to(\!xxE,\!yyE)
    \!xM=#1\!xxM  \advance\!xM\!xxloc  \!yM=#2\!yyM  \advance\!yM\!yyloc
    \!xE=#1\!xxE  \advance\!xE\!xxloc  \!yE=#2\!yyE  \advance\!yE\!yyloc
    \!qjoin (\!xM,\!yM) (\!xE,\!yE)
  \fi
  \let\!M=\!MC
  \ignorespaces}
\def\!rotate(#1,#2)by(#3,#4)to(#5,#6){%
  \!dimenA=#3#1\advance \!dimenA -#4#2
  \!dimenB=#3#2\advance \!dimenB  #4#1
  \divide \!dimenA 32  \divide \!dimenB 32 
  #5=\!dimenA  #6=\!dimenB
  \ignorespaces}
\def\!sin{4.17684}
\def\!cos{31.72624}
\def\!sinandcos#1#2#3{%
 \!dimenD=#1
 \!dimenA=\!dimenD
 \!dimenB=32pt
 \!removept\!dimenD\!value
 \!dimenC=\!dimenD
 \!dimenC=\!value\!dimenC \divide\!dimenC by 64 
 \advance\!dimenB by -\!dimenC
 \!dimenC=\!value\!dimenC \divide\!dimenC by 96 
 \advance\!dimenA by -\!dimenC
 \!dimenC=\!value\!dimenC \divide\!dimenC by 128 
 \advance\!dimenB by \!dimenC%
 \!removept\!dimenA#2
 \!removept\!dimenB#3
 \ignorespaces}
\def\putrule#1from #2 #3 to #4 #5 {%
  \!xloc=\!M{#2}\!xunit  \!xxloc=\!M{#4}\!xunit%
  \!yloc=\!M{#3}\!yunit  \!yyloc=\!M{#5}\!yunit%
  \!dxpos=\!xxloc  \advance\!dxpos by -\!xloc
  \!dypos=\!yyloc  \advance\!dypos by -\!yloc
  \ifdim\!dypos=\!zpt
    \def\!!Line{\!puthline{#1}}\ignorespaces
  \else
    \ifdim\!dxpos=\!zpt
      \def\!!Line{\!putvline{#1}}\ignorespaces
    \else 
       \def\!!Line{}
    \fi
  \fi
  \let\!ML=\!M
  \!setdimenmode
  \!!Line%
  \let\!M=\!ML
  \ignorespaces}
\def\!putsolidhline#1{%
  \ifdim\!dxpos>\!zpt 
    \put{\!hline\!dxpos}#1[l] at {\!xloc} {\!yloc}
  \else 
    \put{\!hline{-\!dxpos}}#1[l] at {\!xxloc} {\!yyloc}
  \fi
  \ignorespaces}
\def\!putsolidvline#1{%
  \ifdim\!dypos>\!zpt 
    \put{\!vline\!dypos}#1[b] at {\!xloc} {\!yloc}
  \else 
    \put{\!vline{-\!dypos}}#1[b] at {\!xxloc} {\!yyloc}
  \fi
  \ignorespaces}
\def\!hline#1{\hbox to #1{\leaders \hrule height\linethickness\hfill}}
\def\!vline#1{\vbox to #1{\leaders \vrule width\linethickness\vfill}}
\def\!putdashedhline#1{%
  \ifdim\!dxpos>\!zpt 
    \!DLsetup\!Flist\!dxpos
    \put{\hbox to \!totalleaderlength{\!hleaders}\!hpartialpattern\!Rtrunc}
      #1[l] at {\!xloc} {\!yloc} 
  \else 
    \!DLsetup\!Blist{-\!dxpos}
    \put{\!hpartialpattern\!Ltrunc\hbox to \!totalleaderlength{\!hleaders}}
      #1[r] at {\!xloc} {\!yloc} 
  \fi
  \ignorespaces}
\def\!putdashedvline#1{%
  \!dypos=-\!dypos
  \ifdim\!dypos>\!zpt 
    \!DLsetup\!Flist\!dypos 
    \put{\vbox{\vbox to \!totalleaderlength{\!vleaders}
      \!vpartialpattern\!Rtrunc}}#1[t] at {\!xloc} {\!yloc} 
  \else 
    \!DLsetup\!Blist{-\!dypos}
    \put{\vbox{\!vpartialpattern\!Ltrunc
      \vbox to \!totalleaderlength{\!vleaders}}}#1[b] at {\!xloc} {\!yloc} 
  \fi
  \ignorespaces}
\def\!DLsetup#1#2{
  \let\!RSlist=#1
  \!countB=#2
  \!countA=\!leaderlength
  \divide\!countB by \!countA
  \!totalleaderlength=\!countB\!leaderlength
  \!Rresiduallength=#2%
  \advance \!Rresiduallength by -\!totalleaderlength
  \!Lresiduallength=\!leaderlength
  \advance \!Lresiduallength by -\!Rresiduallength
  \ignorespaces}
\def\!hleaders{%
  \def\!Rule##1{\vrule height\linethickness width##1}%
  \def\!Skip##1{\hskip##1}%
  \leaders\hbox{\!RSlist}\hfill}
\def\!hpartialpattern#1{%
  \!dimenA=\!zpt \!dimenB=\!zpt 
  \def\!Rule##1{#1{##1}\vrule height\linethickness width\!dimenD}%
  \def\!Skip##1{#1{##1}\hskip\!dimenD}%
  \!RSlist}
\def\!vleaders{%
  \def\!Rule##1{\hrule width\linethickness height##1}%
  \def\!Skip##1{\vskip##1}%
  \leaders\vbox{\!RSlist}\vfill}
\def\!vpartialpattern#1{%
  \!dimenA=\!zpt \!dimenB=\!zpt 
  \def\!Rule##1{#1{##1}\hrule width\linethickness height\!dimenD}%
  \def\!Skip##1{#1{##1}\vskip\!dimenD}%
  \!RSlist}
\def\!Rtrunc#1{\!trunc{#1}>\!Rresiduallength}
\def\!Ltrunc#1{\!trunc{#1}<\!Lresiduallength}
\def\!trunc#1#2#3{%
  \!dimenA=\!dimenB         
  \advance\!dimenB by #1%
  \!dimenD=\!dimenB  \ifdim\!dimenD#2#3\!dimenD=#3\fi
  \!dimenC=\!dimenA  \ifdim\!dimenC#2#3\!dimenC=#3\fi
  \advance \!dimenD by -\!dimenC}
\def\!start (#1,#2){%
  \!plotxorigin=\!xorigin  \advance \!plotxorigin by \!plotsymbolxshift
  \!plotyorigin=\!yorigin  \advance \!plotyorigin by \!plotsymbolyshift
  \!xS=\!M{#1}\!xunit \!yS=\!M{#2}\!yunit
  \!rotateaboutpivot\!xS\!yS
  \!copylist\!UDlist\to\!!UDlist
  \!getnextvalueof\!downlength\from\!!UDlist
  \!distacross=\!zpt
  \!intervalno=0 
  \global\totalarclength=\!zpt
  \ignorespaces}
\def\!ljoin (#1,#2){%
  \advance\!intervalno by 1
  \!xE=\!M{#1}\!xunit \!yE=\!M{#2}\!yunit
  \!rotateaboutpivot\!xE\!yE
  \!xdiff=\!xE \advance \!xdiff by -\!xS
  \!ydiff=\!yE \advance \!ydiff by -\!yS
  \!Pythag\!xdiff\!ydiff\!arclength
  \global\advance \totalarclength by \!arclength%
  \!drawlinearsegment
  \!xS=\!xE \!yS=\!yE
  \ignorespaces}
\def\!linearsolid{%
  \!npoints=\!arclength
  \!countA=\plotsymbolspacing
  \divide\!npoints by \!countA
  \ifnum \!npoints<1 
    \!npoints=1 
  \fi
  \divide\!xdiff by \!npoints
  \divide\!ydiff by \!npoints
  \!xpos=\!xS \!ypos=\!yS
  \loop\ifnum\!npoints>-1
    \!plotifinbounds
    \advance \!xpos by \!xdiff
    \advance \!ypos by \!ydiff
    \advance \!npoints by -1
  \repeat
  \ignorespaces}
\def\!lineardashed{%
  \ifdim\!distacross>\!arclength
    \advance \!distacross by -\!arclength  
  \else
    \loop\ifdim\!distacross<\!arclength
      \!divide\!distacross\!arclength\!dimenA
      \!removept\!dimenA\!t
      \!xpos=\!t\!xdiff \advance \!xpos by \!xS
      \!ypos=\!t\!ydiff \advance \!ypos by \!yS
      \!plotifinbounds
      \advance\!distacross by \plotsymbolspacing
      \!advancedashing
    \repeat  
    \advance \!distacross by -\!arclength
  \fi
  \ignorespaces}
\def\!!advancedashing{%
  \advance\!downlength by -\plotsymbolspacing
  \ifdim \!downlength>\!zpt
  \else
    \advance\!distacross by \!downlength
    \!getnextvalueof\!uplength\from\!!UDlist
    \advance\!distacross by \!uplength
    \!getnextvalueof\!downlength\from\!!UDlist
  \fi}
\def\inboundscheckoff{%
  \def\!plotifinbounds{\!plot(\!xpos,\!ypos)}%
  \def\!initinboundscheck{\relax}\ignorespaces}
\def\!!plotifinbounds{%
  \ifdim \!xpos<\!checkleft
  \else
    \ifdim \!xpos>\!checkright
    \else
      \ifdim \!ypos<\!checkbot
      \else
         \ifdim \!ypos>\!checktop
         \else
           \!plot(\!xpos,\!ypos)
         \fi 
      \fi
    \fi
  \fi}
\def\!!initinboundscheck{%
  \!checkleft=\!arealloc     \advance\!checkleft by \!xorigin
  \!checkright=\!arearloc    \advance\!checkright by \!xorigin
  \!checkbot=\!areabloc      \advance\!checkbot by \!yorigin
  \!checktop=\!areatloc      \advance\!checktop by \!yorigin}
\def\!logten#1#2{%
  \expandafter\!!logten#1\!nil
  \!removept\!dimenF#2%
  \ignorespaces}
\def\!!logten#1#2\!nil{%
  \if -#1%
    \!dimenF=\!zpt
    \def\!next{\ignorespaces}%
  \else
    \if +#1%
      \def\!next{\!!logten#2\!nil}%
    \else
      \if .#1%
        \def\!next{\!!logten0.#2\!nil}%
      \else
        \def\!next{\!!!logten#1#2..\!nil}%
      \fi
    \fi
  \fi
  \!next}
\def\!!!logten#1#2.#3.#4\!nil{%
  \!dimenF=1pt 
  \if 0#1%
    \!!logshift#3pt 
  \else 
    \!logshift#2/
    \!dimenE=#1.#2#3pt 
  \fi 
  \ifdim \!dimenE<\!rootten
    \multiply \!dimenE 10 
    \advance  \!dimenF -1pt
  \fi
  \!dimenG=\!dimenE
    \advance\!dimenG 10pt
  \advance\!dimenE -10pt 
  \multiply\!dimenE 10 
  \!divide\!dimenE\!dimenG\!dimenE
  \!removept\!dimenE\!t
  \!dimenG=\!t\!dimenE
  \!removept\!dimenG\!tt
  \!dimenH=\!tt\!tenAe
    \divide\!dimenH 100
  \advance\!dimenH \!tenAc
  \!dimenH=\!tt\!dimenH
    \divide\!dimenH 100   
  \advance\!dimenH \!tenAa
  \!dimenH=\!t\!dimenH
    \divide\!dimenH 100 
  \advance\!dimenF \!dimenH}
\def\!logshift#1{%
  \if #1/%
    \def\!next{\ignorespaces}%
  \else
    \advance\!dimenF 1pt 
    \def\!next{\!logshift}%
  \fi 
  \!next}
 \def\!!logshift#1{%
   \advance\!dimenF -1pt
   \if 0#1%
     \def\!next{\!!logshift}%
   \else
     \if p#1%
       \!dimenF=1pt
       \def\!next{\!dimenE=1p}%
     \else
       \def\!next{\!dimenE=#1.}%
     \fi
   \fi
   \!next}
\def\beginpicture{%
  \setbox\!picbox=\hbox\bgroup%
  \!xleft=\maxdimen  
  \!xright=-\maxdimen
  \!ybot=\maxdimen
  \!ytop=-\maxdimen}
\def\endpicture{%
  \ifdim\!xleft=\maxdimen
    \!xleft=\!zpt \!xright=\!zpt \!ybot=\!zpt \!ytop=\!zpt 
  \fi
  \global\!Xleft=\!xleft \global\!Xright=\!xright
  \global\!Ybot=\!ybot \global\!Ytop=\!ytop
  \egroup%
  \ht\!picbox=\!Ytop  \dp\!picbox=-\!Ybot
  \ifdim\!Ybot>\!zpt
  \else 
    \ifdim\!Ytop<\!zpt
      \!Ybot=\!Ytop
    \else
      \!Ybot=\!zpt
    \fi
  \fi
  \hbox{\kern-\!Xleft\lower\!Ybot\box\!picbox\kern\!Xright}}
\def\endpicturesave <#1,#2>{%
  \endpicture \global #1=\!Xleft \global #2=\!Ybot \ignorespaces}
\def\setcoordinatesystem{%
  \!ifnextchar{u}{\!getlengths }
    {\!getlengths units <\!xunit,\!yunit>}}
\def\!getlengths units <#1,#2>{%
  \!xunit=#1\relax
  \!yunit=#2\relax
  \!ifcoordmode 
    \let\!SCnext=\!SCccheckforRP
  \else
    \let\!SCnext=\!SCdcheckforRP
  \fi
  \!SCnext}
\def\!SCccheckforRP{%
  \!ifnextchar{p}{\!cgetreference }
    {\!cgetreference point at {\!xref} {\!yref} }}
\def\!cgetreference point at #1 #2 {%
  \edef\!xref{#1}\edef\!yref{#2}%
  \!xorigin=\!xref\!xunit  \!yorigin=\!yref\!yunit  
  \!initinboundscheck 
  \ignorespaces}
\def\!SCdcheckforRP{%
  \!ifnextchar{p}{\!dgetreference}%
    {\ignorespaces}}
\def\!dgetreference point at #1 #2 {%
  \!xorigin=#1\relax  \!yorigin=#2\relax
  \ignorespaces}
\long\def\put#1#2 at #3 #4 {%
  \!setputobject{#1}{#2}%
  \!xpos=\!M{#3}\!xunit  \!ypos=\!M{#4}\!yunit  
  \!rotateaboutpivot\!xpos\!ypos%
  \advance\!xpos -\!xorigin  \advance\!xpos -\!xshift
  \advance\!ypos -\!yorigin  \advance\!ypos -\!yshift
  \kern\!xpos\raise\!ypos\box\!putobject\kern-\!xpos%
  \!doaccounting\ignorespaces}
\long\def\multiput #1#2 at {%
  \!setputobject{#1}{#2}%
  \!ifnextchar"{\!putfromfile}{\!multiput}}
\def\!putfromfile"#1"{%
  \expandafter\!multiput \input #1 /}
\def\!multiput{%
  \futurelet\!nextchar\!!multiput}
\def\!!multiput{%
  \if *\!nextchar
    \def\!nextput{\!alsoby}%
  \else
    \if /\!nextchar
      \def\!nextput{\!finishmultiput}%
    \else
      \def\!nextput{\!alsoat}%
    \fi
  \fi
  \!nextput}
\def\!finishmultiput/{%
  \setbox\!putobject=\hbox{}%
  \ignorespaces}
\def\!alsoat#1 #2 {%
  \!xpos=\!M{#1}\!xunit  \!ypos=\!M{#2}\!yunit  
  \!rotateaboutpivot\!xpos\!ypos%
  \advance\!xpos -\!xorigin  \advance\!xpos -\!xshift
  \advance\!ypos -\!yorigin  \advance\!ypos -\!yshift
  \kern\!xpos\raise\!ypos\copy\!putobject\kern-\!xpos%
  \!doaccounting
  \!multiput}
\def\!alsoby*#1 #2 #3 {%
  \!dxpos=\!M{#2}\!xunit \!dypos=\!M{#3}\!yunit 
  \!rotateonly\!dxpos\!dypos
  \!ntemp=#1%
  \!!loop\ifnum\!ntemp>0
    \advance\!xpos by \!dxpos  \advance\!ypos by \!dypos
    \kern\!xpos\raise\!ypos\copy\!putobject\kern-\!xpos%
    \advance\!ntemp by -1
  \repeat
  \!doaccounting 
  \!multiput}
\def\accountingon{\def\!doaccounting{\!!doaccounting}\ignorespaces}
\def\!!doaccounting{%
  \!xtemp=\!xpos  
  \!ytemp=\!ypos
  \ifdim\!xtemp<\!xleft 
     \!xleft=\!xtemp 
  \fi
  \advance\!xtemp by  \!wd 
  \ifdim\!xright<\!xtemp 
    \!xright=\!xtemp
  \fi
  \advance\!ytemp by -\!dp
  \ifdim\!ytemp<\!ybot  
    \!ybot=\!ytemp
  \fi
  \advance\!ytemp by  \!dp
  \advance\!ytemp by  \!ht 
  \ifdim\!ytemp>\!ytop  
    \!ytop=\!ytemp  
  \fi}
\long\def\!setputobject#1#2{%
  \setbox\!putobject=\hbox{#1}%
  \!ht=\ht\!putobject  \!dp=\dp\!putobject  \!wd=\wd\!putobject
  \wd\!putobject=\!zpt
  \!xshift=.5\!wd   \!yshift=.5\!ht   \advance\!yshift by -.5\!dp
  \edef\!putorientation{#2}%
  \expandafter\!SPOreadA\!putorientation[]\!nil%
  \expandafter\!SPOreadB\!putorientation<\!zpt,\!zpt>\!nil\ignorespaces}
\def\!SPOreadA#1[#2]#3\!nil{\!etfor\!orientation:=#2\do\!SPOreviseshift}
\def\!SPOreadB#1<#2,#3>#4\!nil{\advance\!xshift by -#2\advance\!yshift by -#3}
\def\!SPOreviseshift{%
  \if l\!orientation 
    \!xshift=\!zpt
  \else 
    \if r\!orientation 
      \!xshift=\!wd
    \else 
      \if b\!orientation
        \!yshift=-\!dp
      \else 
        \if B\!orientation 
          \!yshift=\!zpt
        \else 
          \if t\!orientation 
            \!yshift=\!ht
          \fi 
        \fi
      \fi
    \fi
  \fi}
\long\def\!dimenput#1#2(#3,#4){%
  \!setputobject{#1}{#2}%
  \!xpos=#3\advance\!xpos by -\!xshift
  \!ypos=#4\advance\!ypos by -\!yshift
  \kern\!xpos\raise\!ypos\box\!putobject\kern-\!xpos%
  \!doaccounting\ignorespaces}
\def\!setdimenmode{%
  \let\!M=\!M!!\ignorespaces}
\def\!setcoordmode{%
  \let\!M=\!M!\ignorespaces}
\def\!ifcoordmode{%
  \ifx \!M \!M!}
\def\!ifdimenmode{%
  \ifx \!M \!M!!}
\def\!M!#1#2{#1#2} 
\def\!M!!#1#2{#1}
\let\setdimensionmode=\!setdimenmode
\let\setcoordinatemode=\!setcoordmode
\def\!stack[#1]{%
  \let\!lglue=\hfill \let\!rglue=\hfill
  \expandafter\let\csname !#1glue\endcsname=\relax
  \!ifnextchar<{\!!stack}{\!!stack<\stackleading>}}
\def\!!stack<#1>#2{%
  \vbox{\def\!valueslist{}\!ecfor\!value:=#2\do{%
    \expandafter\!rightappend\!value\withCS{\\}\to\!valueslist}%
    \!lop\!valueslist\to\!value
    \let\\=\cr\lineskiplimit=\maxdimen\lineskip=#1%
    \baselineskip=-1000pt\halign{\!lglue##\!rglue\cr \!value\!valueslist\cr}}%
  \ignorespaces}
\def\!lines[#1]#2{%
  \let\!lglue=\hfill \let\!rglue=\hfill
  \expandafter\let\csname !#1glue\endcsname=\relax
  \vbox{\halign{\!lglue##\!rglue\cr #2\crcr}}%
  \ignorespaces}
\def\!Lines[#1]#2{%
  \let\!lglue=\hfill \let\!rglue=\hfill
  \expandafter\let\csname !#1glue\endcsname=\relax
  \vtop{\halign{\!lglue##\!rglue\cr #2\crcr}}%
  \ignorespaces}
\def\setplotsymbol(#1#2){%
  \!setputobject{#1}{#2}
  \setbox\!plotsymbol=\box\!putobject%
  \!plotsymbolxshift=\!xshift 
  \!plotsymbolyshift=\!yshift 
  \ignorespaces}
\def\!!plot(#1,#2){%
  \!dimenA=-\!plotxorigin \advance \!dimenA by #1
  \!dimenB=-\!plotyorigin \advance \!dimenB by #2
  \kern\!dimenA\raise\!dimenB\copy\!plotsymbol\kern-\!dimenA%
  \ignorespaces}
\def\!!!plot(#1,#2){%
  \!dimenA=-\!plotxorigin \advance \!dimenA by #1
  \!dimenB=-\!plotyorigin \advance \!dimenB by #2
  \kern\!dimenA\raise\!dimenB\copy\!plotsymbol\kern-\!dimenA%
  \!countE=\!dimenA
  \!countF=\!dimenB
  \immediate\write\!replotfile{\the\!countE,\the\!countF.}%
  \ignorespaces}
\def\savelinesandcurves on "#1" {%
  \immediate\closeout\!replotfile
  \immediate\openout\!replotfile=#1%
  \let\!plot=\!!!plot}
\def\dontsavelinesandcurves {%
  \let\!plot=\!!plot}
\xdef\!Commentsignal{
\def\writesavefile#1 {%
  \immediate\write\!replotfile{\!Commentsignal #1}%
  \ignorespaces}

\def\replot"#1" {%
  \expandafter\!replot\input #1 /}
\def\!replot#1,#2. {%
  \!dimenA=#1sp
  \kern\!dimenA\raise#2sp\copy\!plotsymbol\kern-\!dimenA
  \futurelet\!nextchar\!!replot}
\def\!!replot{%
  \if /\!nextchar 
    \def\!next{\!finish}%
  \else
    \def\!next{\!replot}%
  \fi
  \!next}


 
 
\def\!Pythag#1#2#3{%
  \!dimenE=#1\relax                                     
  \ifdim\!dimenE<\!zpt 
    \!dimenE=-\!dimenE 
  \fi
  \!dimenF=#2\relax
  \ifdim\!dimenF<\!zpt 
    \!dimenF=-\!dimenF 
  \fi
  \advance \!dimenF by \!dimenE
  \ifdim\!dimenF=\!zpt 
    \!dimenG=\!zpt
  \else 
    \!divide{8\!dimenE}\!dimenF\!dimenE
    \advance\!dimenE by -4pt
      \!dimenE=2\!dimenE
    \!removept\!dimenE\!!t
    \!dimenE=\!!t\!dimenE
    \advance\!dimenE by 64pt
    \divide \!dimenE by 2
    \!dimenH=7pt
    \!!Pythag\!!Pythag\!!Pythag
    \!removept\!dimenH\!!t
    \!dimenG=\!!t\!dimenF
    \divide\!dimenG by 8
  \fi
  #3=\!dimenG
  \ignorespaces}

\def\!!Pythag{
  \!divide\!dimenE\!dimenH\!dimenI
  \advance\!dimenH by \!dimenI
    \divide\!dimenH by 2}

\def\placehypotenuse for <#1> and <#2> in <#3> {%
  \!Pythag{#1}{#2}{#3}}

 
 
 
\def\!qjoin (#1,#2) (#3,#4){%
  \advance\!intervalno by 1
  \!ifcoordmode
    \edef\!xmidpt{#1}\edef\!ymidpt{#2}%
  \else
    \!dimenA=#1\relax \edef\!xmidpt{\the\!dimenA}%
    \!dimenA=#2\relax \edef\!ymidpt{\the\!dimenA}%
  \fi
  \!xM=\!M{#1}\!xunit  \!yM=\!M{#2}\!yunit   \!rotateaboutpivot\!xM\!yM
  \!xE=\!M{#3}\!xunit  \!yE=\!M{#4}\!yunit   \!rotateaboutpivot\!xE\!yE
%
  \!dimenA=\!xM  \advance \!dimenA by -\!xS
  \!dimenB=\!xE  \advance \!dimenB by -\!xM
  \!xB=3\!dimenA \advance \!xB by -\!dimenB
  \!xC=2\!dimenB \advance \!xC by -2\!dimenA
%
  \!dimenA=\!yM  \advance \!dimenA by -\!yS%
  \!dimenB=\!yE  \advance \!dimenB by -\!yM%
  \!yB=3\!dimenA \advance \!yB by -\!dimenB%
  \!yC=2\!dimenB \advance \!yC by -2\!dimenA%
%
  \!xprime=\!xB  \!yprime=\!yB
  \!dxprime=.5\!xC  \!dyprime=.5\!yC
  \!getf \!midarclength=\!dimenA
  \!getf \advance \!midarclength by 4\!dimenA
  \!getf \advance \!midarclength by \!dimenA
  \divide \!midarclength by 12
%
  \!arclength=\!dimenA
  \!getf \advance \!arclength by 4\!dimenA
  \!getf \advance \!arclength by \!dimenA
  \divide \!arclength by 12
  \advance \!arclength by \!midarclength
  \global\advance \totalarclength by \!arclength
%
%
  \ifdim\!distacross>\!arclength 
    \advance \!distacross by -\!arclength
  \else
    \!initinverseinterp
    \loop\ifdim\!distacross<\!arclength
      \!inverseinterp
      \!xpos=\!t\!xC \advance\!xpos by \!xB
        \!xpos=\!t\!xpos \advance \!xpos by \!xS
      \!ypos=\!t\!yC \advance\!ypos by \!yB
        \!ypos=\!t\!ypos \advance \!ypos by \!yS
      \!plotifinbounds
      \advance\!distacross \plotsymbolspacing
      \!advancedashing
    \repeat  
    \advance \!distacross by -\!arclength
  \fi
  \!xS=\!xE
  \!yS=\!yE
  \ignorespaces}

\def\!getf{\!Pythag\!xprime\!yprime\!dimenA%
  \advance\!xprime by \!dxprime
  \advance\!yprime by \!dyprime}

\def\!initinverseinterp{%
  \ifdim\!arclength>\!zpt
    \!divide{8\!midarclength}\!arclength\!dimenE
    \ifdim\!dimenE<\!wmin \!setinverselinear
    \else 
      \ifdim\!dimenE>\!wmax \!setinverselinear
      \else
        \def\!inverseinterp{\!inversequad}\ignorespaces
%
%
         \!removept\!dimenE\!Ew
         \!dimenF=-\!Ew\!dimenE
         \advance\!dimenF by 32pt
         \!dimenG=8pt 
         \advance\!dimenG by -\!dimenE
         \!dimenG=\!Ew\!dimenG
         \!divide\!dimenF\!dimenG\!beta
         \!gamma=1pt
         \advance \!gamma by -\!beta
      \fi
    \fi
  \fi
  \ignorespaces}

\def\!inversequad{%
  \!divide\!distacross\!arclength\!dimenG
  \!removept\!dimenG\!v
  \!dimenG=\!v\!gamma
  \advance\!dimenG by \!beta
  \!dimenG=\!v\!dimenG
  \!removept\!dimenG\!t}

\def\!setinverselinear{%
  \def\!inverseinterp{\!inverselinear}%
  \divide\!dimenE by 8 \!removept\!dimenE\!t
  \!countC=\!intervalno \multiply \!countC 2
  \!countB=\!countC     \advance \!countB -1
  \!countA=\!countB     \advance \!countA -1
  \wlog{\the\!countB th point (\!xmidpt,\!ymidpt) being plotted 
    doesn't lie in the}%
  \wlog{ middle third of the arc between the \the\!countA th 
    and \the\!countC th points:}%
  \wlog{ [arc length \the\!countA\space to \the\!countB]/[arc length 
    \the \!countA\space to \the\!countC]=\!t.}%
  \ignorespaces}
 
\def\!inverselinear{%
  \!divide\!distacross\!arclength\!dimenG
  \!removept\!dimenG\!t}

 

\def\startrotation{%
  \let\!rotateaboutpivot=\!!rotateaboutpivot
  \let\!rotateonly=\!!rotateonly
  \!ifnextchar{b}{\!getsincos }%
    {\!getsincos by {\!cosrotationangle} {\!sinrotationangle} }}
\def\!getsincos by #1 #2 {%
  \edef\!cosrotationangle{#1}%
  \edef\!sinrotationangle{#2}%
  \!ifcoordmode 
    \let\!ROnext=\!ccheckforpivot
  \else
    \let\!ROnext=\!dcheckforpivot
  \fi
  \!ROnext}
\def\!ccheckforpivot{%
  \!ifnextchar{a}{\!cgetpivot}%
    {\!cgetpivot about {\!xpivotcoord} {\!ypivotcoord} }}
\def\!cgetpivot about #1 #2 {%
  \edef\!xpivotcoord{#1}%
  \edef\!ypivotcoord{#2}%
  \!xpivot=#1\!xunit  \!ypivot=#2\!yunit
  \ignorespaces}
\def\!dcheckforpivot{%
  \!ifnextchar{a}{\!dgetpivot}{\ignorespaces}}
\def\!dgetpivot about #1 #2 {%
  \!xpivot=#1\relax  \!ypivot=#2\relax
  \ignorespaces}

\def\stoprotation{%
  \let\!rotateaboutpivot=\!!!rotateaboutpivot
  \let\!rotateonly=\!!!rotateonly
  \ignorespaces}
 
\def\!!rotateaboutpivot#1#2{%
  \!dimenA=#1\relax  \advance\!dimenA -\!xpivot
  \!dimenB=#2\relax  \advance\!dimenB -\!ypivot
  \!dimenC=\!cosrotationangle\!dimenA
    \advance \!dimenC -\!sinrotationangle\!dimenB
  \!dimenD=\!cosrotationangle\!dimenB
    \advance \!dimenD  \!sinrotationangle\!dimenA
  \advance\!dimenC \!xpivot  \advance\!dimenD \!ypivot
  #1=\!dimenC  #2=\!dimenD
  \ignorespaces}

\def\!!rotateonly#1#2{%
  \!dimenA=#1\relax  \!dimenB=#2\relax 
  \!dimenC=\!cosrotationangle\!dimenA
    \advance \!dimenC -\!rotsign\!sinrotationangle\!dimenB
  \!dimenD=\!cosrotationangle\!dimenB
    \advance \!dimenD  \!rotsign\!sinrotationangle\!dimenA
  #1=\!dimenC  #2=\!dimenD
  \ignorespaces}
\def\!rotsign{}
\def\!!!rotateaboutpivot#1#2{\relax}
\def\!!!rotateonly#1#2{\relax}
\stoprotation

\def\!reverserotateonly#1#2{%
  \def\!rotsign{-}%
  \!rotateonly{#1}{#2}%
  \def\!rotsign{}%
  \ignorespaces}

\def\!getspan span <#1>{%
  \!dshade=#1\relax
  \!ifcoordmode 
    \let\!GRnext=\!GRccheckforAP
  \else
    \let\!GRnext=\!GRdcheckforAP
  \fi
  \!GRnext}
\def\!GRccheckforAP{%
  \!ifnextchar{p}{\!cgetanchor }
    {\!cgetanchor point at {\!xshadesave} {\!yshadesave} }}
\def\!cgetanchor point at #1 #2 {%
  \edef\!xshadesave{#1}\edef\!yshadesave{#2}%
  \!xshade=\!xshadesave\!xunit  \!yshade=\!yshadesave\!yunit
  \ignorespaces}
\def\!GRdcheckforAP{%
  \!ifnextchar{p}{\!dgetanchor}%
    {\ignorespaces}}
\def\!dgetanchor point at #1 #2 {%
  \!xshade=#1\relax  \!yshade=#2\relax
  \ignorespaces}

\def\setshadesymbol{%
  \!ifnextchar<{\!setshadesymbol}{\!setshadesymbol<,,,> }}

\def\!setshadesymbol <#1,#2,#3,#4> (#5#6){%
  \!setputobject{#5}{#6}%
  \setbox\!shadesymbol=\box\!putobject%
  \!shadesymbolxshift=\!xshift \!shadesymbolyshift=\!yshift
%
  \!dimenA=\!xshift \advance\!dimenA \!smidge
  \!override\!dimenA{#1}\!lshrinkage%
  \!dimenA=\!wd \advance \!dimenA -\!xshift
    \advance\!dimenA \!smidge
    \!override\!dimenA{#2}\!rshrinkage
  \!dimenA=\!dp \advance \!dimenA \!yshift
    \advance\!dimenA \!smidge
    \!override\!dimenA{#3}\!bshrinkage
  \!dimenA=\!ht \advance \!dimenA -\!yshift
    \advance\!dimenA \!smidge
    \!override\!dimenA{#4}\!tshrinkage
  \ignorespaces}
\def\!smidge{-.2pt}%

\def\!override#1#2#3{%
  \edef\!!override{#2}%
  \ifx \!!override\empty
    #3=#1\relax
  \else
    \if z\!!override
      #3=\!zpt
    \else
      \ifx \!!override\!blankz
        #3=\!zpt
      \else
        #3=#2\relax
      \fi
    \fi
  \fi
  \ignorespaces}
\def\!blankz{ z}

\setshadesymbol ({\fiverm .})

\def\!startvshade#1(#2,#3,#4){%
  \let\!!xunit=\!xunit%
  \let\!!yunit=\!yunit%
  \let\!!xshade=\!xshade%
  \let\!!yshade=\!yshade%
  \def\!getshrinkages{\!vgetshrinkages}%
  \let\!setshadelocation=\!vsetshadelocation%
  \!xS=\!M{#2}\!!xunit
  \!ybS=\!M{#3}\!!yunit
  \!ytS=\!M{#4}\!!yunit
  \!shadexorigin=\!xorigin  \advance \!shadexorigin \!shadesymbolxshift
  \!shadeyorigin=\!yorigin  \advance \!shadeyorigin \!shadesymbolyshift
  \ignorespaces}
 
\def\!starthshade#1(#2,#3,#4){%
  \let\!!xunit=\!yunit%
  \let\!!yunit=\!xunit%
  \let\!!xshade=\!yshade%
  \let\!!yshade=\!xshade%
  \def\!getshrinkages{\!hgetshrinkages}%
  \let\!setshadelocation=\!hsetshadelocation%
  \!xS=\!M{#2}\!!xunit
  \!ybS=\!M{#3}\!!yunit
  \!ytS=\!M{#4}\!!yunit
  \!shadexorigin=\!xorigin  \advance \!shadexorigin \!shadesymbolxshift
  \!shadeyorigin=\!yorigin  \advance \!shadeyorigin \!shadesymbolyshift
  \ignorespaces}

\def\!lattice#1#2#3#4#5{%
  \!dimenA=#1
  \!dimenB=#2
  \!countB=\!dimenB
%
  \!dimenC=#3
  \advance\!dimenC -\!dimenA
  \!countA=\!dimenC
  \divide\!countA \!countB
  \ifdim\!dimenC>\!zpt
    \!dimenD=\!countA\!dimenB
    \ifdim\!dimenD<\!dimenC
      \advance\!countA 1 
    \fi
  \fi
  \!dimenC=\!countA\!dimenB
    \advance\!dimenC \!dimenA
  #4=\!countA
  #5=\!dimenC
  \ignorespaces}

\def\!qshade#1(#2,#3,#4)#5(#6,#7,#8){%
  \!xM=\!M{#2}\!!xunit
  \!ybM=\!M{#3}\!!yunit
  \!ytM=\!M{#4}\!!yunit
  \!xE=\!M{#6}\!!xunit
  \!ybE=\!M{#7}\!!yunit
  \!ytE=\!M{#8}\!!yunit
  \!getcoeffs\!xS\!ybS\!xM\!ybM\!xE\!ybE\!ybB\!ybC
  \!getcoeffs\!xS\!ytS\!xM\!ytM\!xE\!ytE\!ytB\!ytC
  \def\!getylimits{\!qgetylimits}%
  \!shade{#1}\ignorespaces}
 
\def\!lshade#1(#2,#3,#4){%
  \!xE=\!M{#2}\!!xunit
  \!ybE=\!M{#3}\!!yunit
  \!ytE=\!M{#4}\!!yunit
  \!dimenE=\!xE  \advance \!dimenE -\!xS
  \!dimenC=\!ytE \advance \!dimenC -\!ytS
  \!divide\!dimenC\!dimenE\!ytB
  \!dimenC=\!ybE \advance \!dimenC -\!ybS
  \!divide\!dimenC\!dimenE\!ybB
  \def\!getylimits{\!lgetylimits}%
  \!shade{#1}\ignorespaces}
 
\def\!getcoeffs#1#2#3#4#5#6#7#8{%
  \!dimenC=#4\advance \!dimenC -#2
  \!dimenE=#3\advance \!dimenE -#1
  \!divide\!dimenC\!dimenE\!dimenF
  \!dimenC=#6\advance \!dimenC -#4
  \!dimenH=#5\advance \!dimenH -#3
  \!divide\!dimenC\!dimenH\!dimenG
  \advance\!dimenG -\!dimenF
  \advance \!dimenH \!dimenE
  \!divide\!dimenG\!dimenH#8
  \!removept#8\!t
  #7=-\!t\!dimenE
  \advance #7\!dimenF
  \ignorespaces}

\def\!shade#1{%
  \!getshrinkages#1<,,,>\!nil
  \advance \!dimenE \!xS
  \!lattice\!!xshade\!dshade\!dimenE
    \!parity\!xpos
  \!dimenF=-\!dimenF
    \advance\!dimenF \!xE
  \!loop\!not{\ifdim\!xpos>\!dimenF}
    \!shadecolumn%
    \advance\!xpos \!dshade
    \advance\!parity 1
  \repeat
  \!xS=\!xE
  \!ybS=\!ybE
  \!ytS=\!ytE
  \ignorespaces}

\def\!vgetshrinkages#1<#2,#3,#4,#5>#6\!nil{%
  \!override\!lshrinkage{#2}\!dimenE
  \!override\!rshrinkage{#3}\!dimenF
  \!override\!bshrinkage{#4}\!dimenG
  \!override\!tshrinkage{#5}\!dimenH
  \ignorespaces}
\def\!hgetshrinkages#1<#2,#3,#4,#5>#6\!nil{%
  \!override\!lshrinkage{#2}\!dimenG
  \!override\!rshrinkage{#3}\!dimenH
  \!override\!bshrinkage{#4}\!dimenE
  \!override\!tshrinkage{#5}\!dimenF
  \ignorespaces}

\def\!shadecolumn{%
  \!dxpos=\!xpos
  \advance\!dxpos -\!xS
  \!removept\!dxpos\!dx
  \!getylimits
  \advance\!ytpos -\!dimenH
  \advance\!ybpos \!dimenG
  \!yloc=\!!yshade
  \ifodd\!parity 
     \advance\!yloc \!dshade
  \fi
  \!lattice\!yloc{2\!dshade}\!ybpos%
    \!countA\!ypos
  \!dimenA=-\!shadexorigin \advance \!dimenA \!xpos
  \loop\!not{\ifdim\!ypos>\!ytpos}
    \!setshadelocation
    \!rotateaboutpivot\!xloc\!yloc%
    \!dimenA=-\!shadexorigin \advance \!dimenA \!xloc
    \!dimenB=-\!shadeyorigin \advance \!dimenB \!yloc
    \kern\!dimenA \raise\!dimenB\copy\!shadesymbol \kern-\!dimenA
    \advance\!ypos 2\!dshade
  \repeat
  \ignorespaces}
 
\def\!qgetylimits{%
  \!dimenA=\!dx\!ytC              
  \advance\!dimenA \!ytB
  \!ytpos=\!dx\!dimenA
  \advance\!ytpos \!ytS
  \!dimenA=\!dx\!ybC              
  \advance\!dimenA \!ybB
  \!ybpos=\!dx\!dimenA
  \advance\!ybpos \!ybS}
 
\def\!lgetylimits{%
  \!ytpos=\!dx\!ytB
  \advance\!ytpos \!ytS
  \!ybpos=\!dx\!ybB
  \advance\!ybpos \!ybS}
 
\def\!vsetshadelocation{
  \!xloc=\!xpos
  \!yloc=\!ypos}
\def\!hsetshadelocation{
  \!xloc=\!ypos
  \!yloc=\!xpos}





\def\!axisticks {%
  \def\!nextkeyword##1 {%
    \expandafter\ifx\csname !ticks##1\endcsname \relax
      \def\!next{\!fixkeyword{##1}}%
    \else
      \def\!next{\csname !ticks##1\endcsname}%
    \fi
    \!next}%
  \!axissetup
    \def\!axissetup{\relax}%
  \edef\!ticksinoutsign{\!ticksinoutSign}%
  \!ticklength=\longticklength
  \!tickwidth=\linethickness
  \!gridlinestatus
  \!setticktransform
  \!maketick
  \!tickcase=0
  \def\!LTlist{}%
  \!nextkeyword}

\def\ticksout{%
  \def\!ticksinoutSign{+}}

\ticksout

\def\nogridlines{%
  \def\!gridlinestatus{\!gridlinestoofalse}}
\nogridlines

\def\loggedticks{%
  \def\!setticktransform{\let\!ticktransform=\!logten}}
\def\unloggedticks{%
  \def\!setticktransform{\let\!ticktransform=\!donothing}}
\def\!donothing#1#2{\def#2{#1}}
\unloggedticks

\expandafter\def\csname !ticks/\endcsname{%
  \!not {\ifx \!LTlist\empty}
    \!placetickvalues
  \fi
  \def\!tickvalueslist{}%
  \def\!LTlist{}%
  \expandafter\csname !axis/\endcsname}

\def\!maketick{%
  \setbox\!boxA=\hbox{%
    \beginpicture
      \!setdimenmode
      \setcoordinatesystem point at {\!zpt} {\!zpt}   
      \linethickness=\!tickwidth
      \ifdim\!ticklength>\!zpt
        \putrule from {\!zpt} {\!zpt} to
          {\!ticksinoutsign\!tickxsign\!ticklength}
          {\!ticksinoutsign\!tickysign\!ticklength}
      \fi
      \if!gridlinestoo
        \putrule from {\!zpt} {\!zpt} to
          {-\!tickxsign\!xaxislength} {-\!tickysign\!yaxislength}
      \fi
    \endpicturesave <\!Xsave,\!Ysave>}%
    \wd\!boxA=\!zpt}
  
\def\!ticksin{%
  \def\!ticksinoutsign{-}%
  \!maketick
  \!nextkeyword}

\def\!ticksout{%
  \def\!ticksinoutsign{+}%
  \!maketick
  \!nextkeyword}

\def\!tickslength<#1> {%
  \!ticklength=#1\relax
  \!maketick
  \!nextkeyword}

\def\!tickslong{%
  \!tickslength<\longticklength> }

\def\!ticksshort{%
  \!tickslength<\shortticklength> }

\def\!tickswidth<#1> {%
  \!tickwidth=#1\relax
  \!maketick
  \!nextkeyword}

\def\!ticksandacross{%
  \!gridlinestootrue
  \!maketick
  \!nextkeyword}

\def\!ticksbutnotacross{%
  \!gridlinestoofalse
  \!maketick
  \!nextkeyword}

\def\!tickslogged{%
  \let\!ticktransform=\!logten
  \!nextkeyword}

\def\!ticksunlogged{%
  \let\!ticktransform=\!donothing
  \!nextkeyword}

\def\!ticksunlabeled{%
  \!tickcase=0
  \!nextkeyword}

\def\!ticksnumbered{%
  \!tickcase=1
  \!nextkeyword}

\def\!tickswithvalues#1/ {%
  \edef\!tickvalueslist{#1! /}%
  \!tickcase=2
  \!nextkeyword}

\def\!ticksquantity#1 {%
  \ifnum #1>1
    \!updatetickoffset
    \!countA=#1\relax
    \advance \!countA -1
    \!ticklocationincr=\!axisLength
      \divide \!ticklocationincr \!countA
    \!ticklocation=\!axisstart
    \loop \!not{\ifdim \!ticklocation>\!axisend}
      \!placetick\!ticklocation
      \ifcase\!tickcase
          \relax 
        \or
          \relax 
        \or
          \expandafter\!gettickvaluefrom\!tickvalueslist
          \edef\!tickfield{{\the\!ticklocation}{\!value}}%
          \expandafter\!listaddon\expandafter{\!tickfield}\!LTlist%
      \fi
      \advance \!ticklocation \!ticklocationincr
    \repeat
  \fi
  \!nextkeyword}

\def\!ticksat#1 {%
  \!updatetickoffset
  \edef\!Loc{#1}%
  \if /\!Loc
    \def\next{\!nextkeyword}%
  \else
    \!ticksincommon
    \def\next{\!ticksat}%
  \fi
  \next}    
      
\def\!ticksfrom#1 to #2 by #3 {%
  \!updatetickoffset
  \edef\!arg{#3}%
  \expandafter\!separate\!arg\!nil
  \!scalefactor=1
  \expandafter\!countfigures\!arg/
  \edef\!arg{#1}%
  \!scaleup\!arg by\!scalefactor to\!countE
  \edef\!arg{#2}%
  \!scaleup\!arg by\!scalefactor to\!countF
  \edef\!arg{#3}%
  \!scaleup\!arg by\!scalefactor to\!countG
  \loop \!not{\ifnum\!countE>\!countF}
    \ifnum\!scalefactor=1
      \edef\!Loc{\the\!countE}%
    \else
      \!scaledown\!countE by\!scalefactor to\!Loc
    \fi
    \!ticksincommon
    \advance \!countE \!countG
  \repeat
  \!nextkeyword}

\def\!updatetickoffset{%
  \!dimenA=\!ticksinoutsign\!ticklength
  \ifdim \!dimenA>\!offset
    \!offset=\!dimenA
  \fi}

\def\!placetick#1{%
  \if!xswitch
    \!xpos=#1\relax
    \!ypos=\!axisylevel
  \else
    \!xpos=\!axisxlevel
    \!ypos=#1\relax
  \fi
  \advance\!xpos \!Xsave
  \advance\!ypos \!Ysave
  \kern\!xpos\raise\!ypos\copy\!boxA\kern-\!xpos
  \ignorespaces}

\def\!gettickvaluefrom#1 #2 /{%
  \edef\!value{#1}%
  \edef\!tickvalueslist{#2 /}%
  \ifx \!tickvalueslist\!endtickvaluelist
    \!tickcase=0
  \fi}
\def\!endtickvaluelist{! /}

\def\!ticksincommon{%
  \!ticktransform\!Loc\!t
  \!ticklocation=\!t\!!unit
  \advance\!ticklocation -\!!origin
  \!placetick\!ticklocation
  \ifcase\!tickcase
    \relax 
  \or 
    \ifdim\!ticklocation<-\!!origin
      \edef\!Loc{$\!Loc$}%
    \fi
    \edef\!tickfield{{\the\!ticklocation}{\!Loc}}%
    \expandafter\!listaddon\expandafter{\!tickfield}\!LTlist%
  \or 
    \expandafter\!gettickvaluefrom\!tickvalueslist
    \edef\!tickfield{{\the\!ticklocation}{\!value}}%
    \expandafter\!listaddon\expandafter{\!tickfield}\!LTlist%
  \fi}

\def\!separate#1\!nil{%
  \!ifnextchar{-}{\!!separate}{\!!!separate}#1\!nil}
\def\!!separate-#1\!nil{%
  \def\!sign{-}%
  \!!!!separate#1..\!nil}
\def\!!!separate#1\!nil{%
  \def\!sign{+}%
  \!!!!separate#1..\!nil}
\def\!!!!separate#1.#2.#3\!nil{%
  \def\!arg{#1}%
  \ifx\!arg\!empty
    \!countA=0
  \else
    \!countA=\!arg
  \fi
  \def\!arg{#2}%
  \ifx\!arg\!empty
    \!countB=0
  \else
    \!countB=\!arg
  \fi}
 
\def\!countfigures#1{%
  \if #1/%
    \def\!next{\ignorespaces}%
  \else
    \multiply\!scalefactor 10
    \def\!next{\!countfigures}%
  \fi
  \!next}

\def\!scaleup#1by#2to#3{%
  \expandafter\!separate#1\!nil
  \multiply\!countA #2\relax
  \advance\!countA \!countB
  \if -\!sign
    \!countA=-\!countA
  \fi
  #3=\!countA
  \ignorespaces}

\def\!scaledown#1by#2to#3{%
  \!countA=#1\relax
  \ifnum \!countA<0 
    \def\!sign{-}
    \!countA=-\!countA
  \else
    \def\!sign{}%
  \fi
  \!countB=\!countA
  \divide\!countB #2\relax
  \!countC=\!countB
    \multiply\!countC #2\relax
  \advance \!countA -\!countC
  \edef#3{\!sign\the\!countB.}
  \!countC=\!countA 
  \ifnum\!countC=0 
    \!countC=1
  \fi
  \multiply\!countC 10
  \!loop \ifnum #2>\!countC
    \edef#3{#3\!zero}%
    \multiply\!countC 10
  \repeat
  \edef#3{#3\the\!countA}
  \ignorespaces}

\def\!placetickvalues{%
  \advance\!offset \tickstovaluesleading
  \if!xswitch
    \setbox\!boxA=\hbox{%
      \def\\##1##2{%
        \!dimenput {##2} [B] (##1,\!axisylevel)}%
      \beginpicture 
        \!LTlist
      \endpicturesave <\!Xsave,\!Ysave>}%
    \!dimenA=\!axisylevel
      \advance\!dimenA -\!Ysave
      \advance\!dimenA \!tickysign\!offset
      \if -\!tickysign
        \advance\!dimenA -\ht\!boxA
      \else
        \advance\!dimenA  \dp\!boxA
      \fi
    \advance\!offset \ht\!boxA 
      \advance\!offset \dp\!boxA
    \!dimenput {\box\!boxA} [Bl] <\!Xsave,\!Ysave> (\!zpt,\!dimenA)
  \else
    \setbox\!boxA=\hbox{%
      \def\\##1##2{%
        \!dimenput {##2} [r] (\!axisxlevel,##1)}%
      \beginpicture 
        \!LTlist
      \endpicturesave <\!Xsave,\!Ysave>}%
    \!dimenA=\!axisxlevel
      \advance\!dimenA -\!Xsave
      \advance\!dimenA \!tickxsign\!offset
      \if -\!tickxsign
        \advance\!dimenA -\wd\!boxA
      \fi
    \advance\!offset \wd\!boxA
    \!dimenput {\box\!boxA} [Bl] <\!Xsave,\!Ysave> (\!dimenA,\!zpt)
  \fi}

\normalgraphs
\catcode`!=12 



\catcode`@=11 \catcode`!=11

\let\!pictexendpicture=\endpicture
\let\!pictexframe=\frame
\let\!pictexlinethickness=\linethickness
\let\!pictexmultiput=\multiput
\let\!pictexput=\put

\def\beginpicture{%
  \setbox\!picbox=\hbox\bgroup%
  \let\endpicture=\!pictexendpicture
  \let\frame=\!pictexframe
  \let\linethickness=\!pictexlinethickness
  \let\multiput=\!pictexmultiput
  \let\put=\!pictexput
  \let\input=\@@input   
  \!xleft=\maxdimen
  \!xright=-\maxdimen
  \!ybot=\maxdimen
  \!ytop=-\maxdimen}

\let\frame=\!latexframe

\let\pictexframe=\!pictexframe

\let\linethickness=\!latexlinethickness
\let\pictexlinethickness=\!pictexlinethickness

\let\\=\@normalcr
\catcode`@=12 \catcode`!=12

\def\RR{\mathbb{R}}

\title{\bf Curve network interpolation by $C^1$ quadratic B-spline surfaces}
\author{C. Dagnino, P. Lamberti and S. Remogna\thanks{Department of Mathematics, University of Torino, via C. Alberto 10, 10123 Torino, Italy ({\tt catterina.dagnino@unito.it, paola.lamberti@unito.it, sara.remogna@unito.it})}}

\date{}
\begin{document}

\maketitle

\begin{abstract} In this paper we investigate the problem of interpolating a B-spline curve network, in order to create a surface satisfying such a constraint and defined by blending functions spanning the space of bivariate $C^1$ quadratic splines on criss-cross triangulations. We prove the existence and uniqueness of the surface, providing a constructive algorithm for its generation. We also present numerical and graphical results and comparisons with other methods. 

\end{abstract}\vskip 10pt

{\sl Keywords}: Interpolation; B-spline surface; Curve network

{\sl Subject classification AMS (MOS)}: 65D05, 65D07, 65D17

\section{Introduction}
The interpolation of a curve network by a smooth parametric surface has been a well-studied problem in literature since the second half of last century. Indeed it is easier and more intuitive to describe a complicated 3D free-form surface by creating a curve network than to directly manage surface control points. 

Given a curve network, in the literature this interpolation problem is faced up by considering different approaches. For instance a classical one is based on blending-function methods \cite{Bar,Bos,del,gor1,gor2,gor3}. Some other ones consist in the interpolation of the curve mesh either by a smooth regularly parametrized surface with one polynomial piece per facet \cite{pet} or by subdivision schemes \cite{con}.

We recall that, in general, most surface fitting methods fall into one of two categories: global or local methods.
A global method represents a surface by a ready available expression on the whole parametric domain and it is generally obtained either by solving  a system of equations or by other schemes, for instance the ones based on the quasi-interpolation, that do not require to solve any system of equations.
Local methods are more geometric in nature, constructing the surface patch-wise, using only local data for each step and imposing the required smoothness patch by patch. 

Moreover it is well known that NURBS, based on rectangular patches, are a widely used computational geometry technology in CAGD, thanks to their many good properties.
Tensor product B-spline surfaces are a specific kind of NURBS \cite{hl,pie}.
However, sometimes the above surfaces can create unwanted oscillations and some inflection points on the surface, due to their high coordinate degree.
An example of tensor product B-spline surface interpolating a B-spline curve network is proposed in \cite[Chap. 10]{pie}.
 It is obtained from a blending-function method based on a special application of Gordon type  schemes \cite{gor1,gor2,gor3}, constructing three different surfaces expressed by means of B-spline functions belonging to different spaces, so that the bivariate resulting one is not defined on the rectangular grid associated to the knot vectors of the curve net, since a knot refinement is required. 

NURBS on triangulations can avoid the unwanted oscillations, since they have a  total degree lower than the one of NURBS based on rectangular patches. Therefore they can be very useful in CAGD \cite{LW2,LW3,LW4}.

In this paper we want to combine the advantageous features of the above cited techniques, proposing a new global  method, to generate a surface interpolating a given B-spline curve network and using as blending functions  bivariate B-splines  on a criss-cross triangulation $T_{mn}$ of the parametric domain. In particular, we consider  $C^1$ quadratic B-spline curve networks and we define a parametric surface that satisfies the above interpolation  constraint and it is based on the B-spline functions spanning the space $S_2^1(T_{mn})$ of all quadratic $C^1$ splines on  $T_{mn}$.

We remark that this space has been widely studied, with reference to its dimension, local basis, approximation power, etc. and it has been used in many applications  (see \cite{DL1,DL5,DL4,DS,L,S1,S3,W,LW2} and references therein). This paper wants also to be a further contribution to the researches on the surface construction based on above blending functions spanning $S_2^1(T_{mn})$.

The structure of this paper is the following.

In Section \ref{Sect2} we define the B-spline curve network with its compatibility conditions.

In Section \ref{sec_int} we give necessary and sufficient conditions for the existence of the B-spline interpolating surface and we prove its uniqueness. Moreover, we propose a constructive algorithm for its generation and some notes on the behaviour of the surface control points generation process with respect to the round-off error.

Finally, in Section \ref{res_num} we present some numerical and graphical applications. In particular we numerically validate the uniqueness of the surface interpolating the given B-spline curve network and we compare our method with two other ones, based on classical biquadratic tensor product B-spline functions. Moreover we underline that  the rectangular topology of the parametric domain is not a too tight constraint for the object shape.


\section{The B-spline curve network}\label{Sect2}
\noindent
Let $\{P_j^{(r)}\}_{j=0}^{n+1}$, $r=0,\dots,m$ and $\{Q_i^{(s)}\}_{i=0}^{m+1}$, $s=0,\dots,n$, $P_j^{(r)}$, $Q_i^{(s)} \in \RR^3$, be the control points of two given sets of $C^1$ quadratic B-spline curves
\begin{equation}\label{rete_curve}
\begin{array}{l} 
\phi_r(v)=\displaystyle\sum_{j=0}^{n+1}P_j^{(r)}B_j(v|V),\quad\quad r=0,\dots,m, \quad\quad v\in [c,d] \subset \RR,\\
\psi_s(u)=\displaystyle\sum_{i=0}^{m+1}Q_i^{(s)}B_i(u|U),\quad\quad s=0,\dots,n, \quad\quad u\in [a,b] \subset \RR,
\end{array}
\end{equation}
where we assume that the curves (\ref{rete_curve}) satisfy the following compatibility conditions:
\begin{itemize} 
\item[C1.] as independent sets, they are compatible in the B-spline sense, that is, all the $\psi_s(u)$ are defined on a common parametric knot vector
\begin{equation} 
U=\{a= u_{-2}\equiv u_{-1}\equiv u_0 < u_1 < \cdots <  u_{m}\equiv u_{m+1}\equiv u_{m+2}=b\} \label{vetU}
\end{equation}
and all the $\phi_r(v)$ are defined on a common parametric knot vector
\begin{equation} 
V=\{c=v_{-2}\equiv v_{-1}\equiv v_0  < v_1 < \cdots <   v_{n}\equiv v_{n+1}\equiv v_{n+2}=d\}.\label{vetV}
\end{equation}
In our notation the B-splines $B_i(u|U)$ and $B_j(v|V)$ have supports $[u_{i-2},u_{i+1}]$ and $[v_{j-2},v_{j+1}]$, respectively \cite{S1,S3}. 

Moreover, we assume $(\xi_i,v_s)$, with 
\begin{equation}\label{si}
	\xi_i=\frac{u_{i-1}+u_i}{2},\ i=0,\ldots,m+1
\end{equation}
and $(u_r,\eta_j)$, with 
\begin{equation}\label{tj}
\eta_j=\frac{v_{j-1}+v_j}{2},\ j=0,\ldots,n+1
\end{equation}
as the pre-image of $Q_i^{(s)}$ for all $s$ and $P_j^{(r)}$ for all $r$, respectively (Fig. \ref{par_domain}). We recall that $\xi_i$ and $\eta_j$ are known as Greville abscissae;
\item[]
\item[C2.] \begin{equation} \label{incroci} I_{rs}=\phi_r(v_s)=\psi_s(u_r), \ \ \ r=0,\ldots,m,\ \ s=0,\ldots,n. \end{equation}
\end{itemize}

\begin{figure}[ht!]
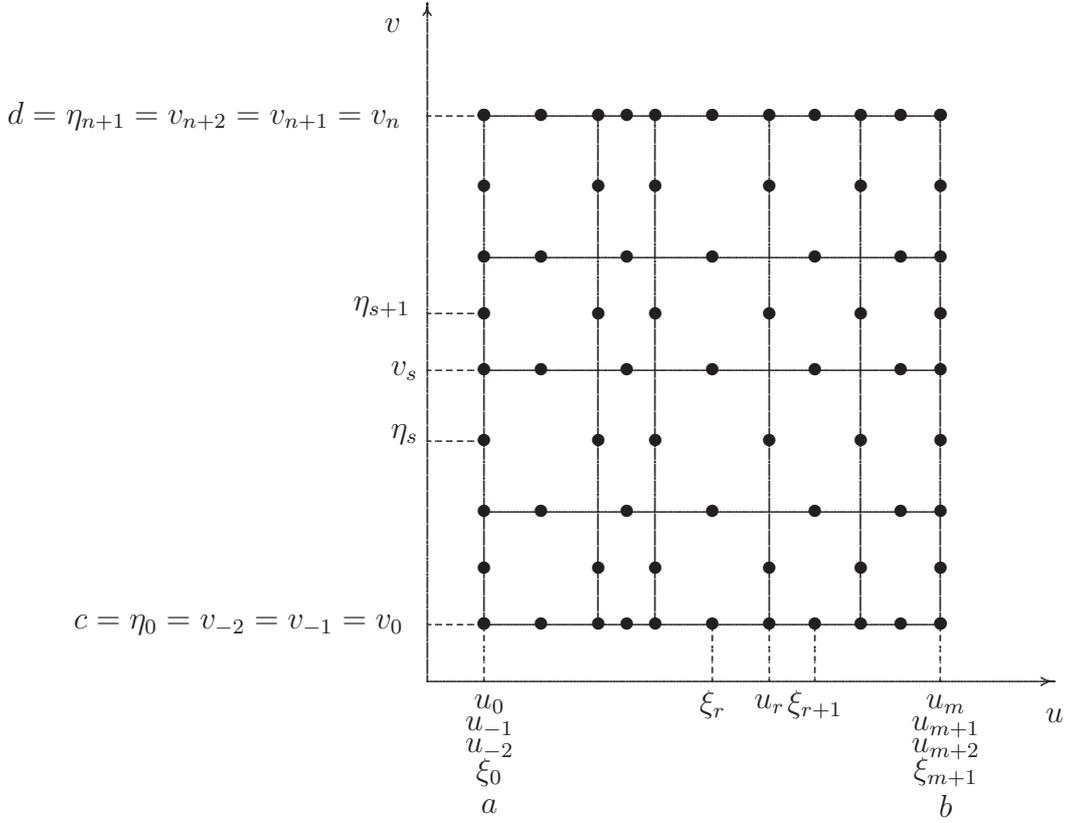

$$
\beginpicture
\setcoordinatesystem units <1.5cm,1.5cm>
\setplotarea x from -1 to 4, y from -1 to 5.5
\setlinear
\plot -0.5 -0.5  4 -0.5 /
\plot -0.5 -0.5  -0.5 4.5 /
\plot 0 0  4 0 /
\plot 0 0  0 4.5 /
\plot  4 4.5  0 4.5 /
\plot  4 4.5 4 0 /
\plot 1 0  1  4.5 /
\plot 1.5 0  1.5  4.5 /
\plot 2.5 0  2.5  4.5 /
\plot 3.3 0  3.3  4.5 /
\plot  0 1  4  1 /
\plot  0 2.25  4  2.25 /
\plot  0 3.25  4  3.25 /
\arrow <5pt> [.1,.6] from -.5 4.5 to -.5 5.5
\arrow <5pt> [.1,.6] from 4 -.5 to  5 -.5
\put {$v$} at -.8  5.3
\put {$u$} at 5  -.8
\setdashes
\setdashpattern <2pt,2pt>
\plot 0 0   0 -0.5 /
\plot 4 0   4 -0.5 /
\plot 0 0   -0.5 0 /
\plot 0 4.5  -0.5 4.5 /
\plot 2 0   2 -0.5 /
\plot 2.5 0  2.5  -0.5 /
\plot 2.9 0   2.9 -0.5 /
\plot 0 1.625  -0.5 1.625 /
\plot 0 2.25  -0.5 2.25 /
\plot 0 2.75  -0.5 2.75 /
\put {$u_{r}$} at 2.5 -.7
\put {$v_{s}$} at -.7 2.25
\put {$\xi_{r}$} at 2.0 -.7
\put {$\xi_{r+1}$} at 2.9 -.7
\put {$\eta_{s}$} at -.7 1.6875
\put {$\eta_{s+1}$} at -.9 2.8125
\put { $a$} at 0 -1.6
\put { $\xi_0$} at 0 -1.3
\put { $u_0$} at 0 -0.7
\put { $u_{-1}$} at 0 -.9
\put { $u_{-2}$} at 0 -1.1
\put { $u_{m}$} at 4 -.7
\put { $u_{m+1}$} at 4 -.9
\put { $u_{m+2}$} at 4 -1.1
\put { $\xi_{m+1}$} at 4 -1.3
\put { $b$} at 4 -1.6
\put { $c=\eta_0=v_{-2}=v_{-1}=v_0$} at -2.2 0
\put { $d=\eta_{n+1}=v_{n+2}=v_{n+1}=v_{n}$} at -2.5 4.5
\put {$\bullet$} at 0 0
\put {$\bullet$} at 0.5 0
\put {$\bullet$} at 1.25 0
\put {$\bullet$} at  2 0
\put {$\bullet$} at  2.9 0
\put {$\bullet$} at  3.65 0
\put {$\bullet$} at  4 0
\put {$\bullet$} at 0 1
\put {$\bullet$} at 0.5 1
\put {$\bullet$} at 1.25 1
\put {$\bullet$} at  2 1
\put {$\bullet$} at  2.9 1
\put {$\bullet$} at  3.65 1
\put {$\bullet$} at  4 1
\put {$\bullet$} at 0 2.25
\put {$\bullet$} at 0.5 2.25
\put {$\bullet$} at 1.25 2.25
\put {$\bullet$} at  2 2.25
\put {$\bullet$} at  2.9 2.25
\put {$\bullet$} at  3.65 2.25
\put {$\bullet$} at  4 2.25
\put {$\bullet$} at 0 3.25
\put {$\bullet$} at 0.5 3.25
\put {$\bullet$} at 1.25 3.25
\put {$\bullet$} at  2 3.25
\put {$\bullet$} at  2.9 3.25
\put {$\bullet$} at  3.65 3.25
\put {$\bullet$} at  4 3.25
\put {$\bullet$} at 0 4.5
\put {$\bullet$} at 0.5 4.5
\put {$\bullet$} at 1.25 4.5
\put {$\bullet$} at  2 4.5
\put {$\bullet$} at  2.9 4.5
\put {$\bullet$} at  3.65 4.5
\put {$\bullet$} at  4 4.5
\put {$\bullet$} at  0 0
\put {$\bullet$} at  0 0.5
\put {$\bullet$} at  0 1.625
\put {$\bullet$} at  0 2.75
\put {$\bullet$} at  0 3.875
\put {$\bullet$} at  0 4.5
\put {$\bullet$} at  1 0
\put {$\bullet$} at  1 0.5
\put {$\bullet$} at  1 1.625
\put {$\bullet$} at  1 2.75
\put {$\bullet$} at  1 3.875
\put {$\bullet$} at  1 4.5
\put {$\bullet$} at  1.5 0
\put {$\bullet$} at  1.5 0.5
\put {$\bullet$} at  1.5 1.625
\put {$\bullet$} at  1.5 2.75
\put {$\bullet$} at  1.5 3.875
\put {$\bullet$} at  1.5 4.5
\put {$\bullet$} at  2.5 0
\put {$\bullet$} at  2.5 0.5
\put {$\bullet$} at  2.5 1.625
\put {$\bullet$} at  2.5 2.75
\put {$\bullet$} at  2.5 3.875
\put {$\bullet$} at  2.5 4.5
\put {$\bullet$} at  3.3 0
\put {$\bullet$} at  3.3 0.5
\put {$\bullet$} at  3.3 1.625
\put {$\bullet$} at  3.3 2.75
\put {$\bullet$} at  3.3 3.875
\put {$\bullet$} at  3.3 4.5
\put {$\bullet$} at  4 0
\put {$\bullet$} at  4 0.5
\put {$\bullet$} at  4 1.625
\put {$\bullet$} at  4 2.75
\put {$\bullet$} at  4 3.875
\put {$\bullet$} at  4 4.5
\endpicture
$$
\caption{Parametric domain $\Omega=[a,b]\times[c,d]$, pre-images both of the network curves and of their control points ($\bullet$).}
\label{par_domain}
\end{figure}

Although we suppose the curve network is given, however we remark that a curve network satisfying the condition C2. always exists. Indeed, from the property of B-spline local support, we notice that for $r=1,\ldots,m-1$ and $s=1,\ldots,n-1$, the condition C2. is equivalent to
\begin{equation} 
P_{s}^{(r)}B_{s}(v_{s}|V)+P_{s+1}^{(r)}B_{s+1}(v_{s}|V)=Q_{r}^{(s)}B_{r}(u_{r}|U)+Q_{r+1}^{(s)}B_{r+1}(u_{r}|U).\label{inner}
\end{equation}
Since we have (\cite[Chap. X]{dB}; \cite[Sect. 1.5]{ch})
$$
\begin{array}{ll}
B_{r}(u_{r}|U)=\sigma_{r+1},   & B_{r+1}(u_{r}|U)=\sigma_{r+1}',  \\
B_{s}(v_{s}|V)=\tau_{s+1},   & B_{s+1}(v_{s}|V)=\tau_{s+1}',
\end{array}
$$
where, for $0 \leq r \leq m+1$ and $0 \leq s \leq n+1$,
\begin{equation}\label{s-t}
\begin{array}{c}
\sigma_{r+1}=\frac{h_{r+1}}{h_{r}+h_{r+1}} ,\ \ \sigma_r'=\frac{h_{r-1}}{h_{r-1}+h_r} ,\\
\tau_{s+1}=\frac{k_{s+1}}{k_{s}+k_{s+1}} ,\ \ \tau_s'=\frac{k_{s-1}}{k_{s-1}+k_s}, 
\end{array}
\end{equation}
with $h_r=u_r-u_{r-1}$, $k_s=v_s-v_{s-1}$ and $h_{-1}=h_{m+2}=k_{-1}=k_{n+2}=0$ (when in (\ref{s-t}) we have $\frac00$, we set the corresponding value equal to zero), therefore, the condition (\ref{inner}) can be written as follows
\begin{equation}
\tau_{s+1}P_{s}^{(r)}+\tau_{s+1}'P_{s+1}^{(r)}=\sigma_{r+1}Q_{r}^{(s)}+\sigma_{r+1}'Q_{r+1}^{(s)}. \label{cond111}
\end{equation}

By a similar argument, we can prove that, for $s=0,n$ and $r=1,\ldots,m-1$, the condition C2. is equivalent to
\begin{equation}
\sigma_{r+1}Q_{r}^{(0)}+\sigma_{r+1}'Q_{r+1}^{(0)}=P_0^{(r)},\ \ \ \sigma_{r+1}Q_{r}^{(n)}+\sigma_{r+1}'Q_{r+1}^{(n)}=P_{n+1}^{(r)} \label{cond222}
\end{equation}
and, for $r=0,m$ and $s=1,\ldots,n-1$, it is equivalent to
\begin{equation}
\tau_{s+1}P_{s}^{(0)}+\tau_{s+1}'P_{s+1}^{(0)}=Q_0^{(s)},\ \ \ \tau_{s+1}P_{s}^{(m)}+\tau_{s+1}'P_{s+1}^{(m)}=Q_{m+1}^{(s)}. \label{cond333}
\end{equation}
Moreover, it results
\begin{equation}
Q_0^{(0)}=P_0^{(0)},\ Q_{m+1}^{(0)}=P_0^{(m)},\ Q_{0}^{(n)}=P_{n+1}^{(0)},\ Q_{m+1}^{(n)}=P_{n+1}^{(m)}. \label{cond444}
\end{equation}

Then, since from (\ref{rete_curve}), the number of curve network control points is $(m+1)(n+2)+(m+2)(n+1)$ and, from (\ref{cond111})-(\ref{cond444}), the number of constraints is $(m+1)(n+1)$, a curve network satisfying the condition C2. always exists and  several control points can be arbitrarily chosen to define it.


\section{Construction of the $C^1$ quadratic B-spline surface interpolating the curve network}\label{sec_int}

Let $T_{mn}$ be the criss-cross triangulation of the parameter domain $\Omega=[a,b]\times[c,d]$, based on the knots $U\times V$, given in (\ref{vetU}) and (\ref{vetV}) (Fig. \ref{par_fdom}).

Let ${\cal B}_{mn}=\{B_{ij}(u,v), (i,j)\in K_{mn}\}$, with $K_{mn}=\{(i,j): 0\leq i\leq m+1,\ 0\leq j\leq n+1\}$ be the collection of $(m+2)(n+2)$ bivariate B-splines \cite{cw,DL1,S1,S3,W}, spanning the space $S_2^1(T_{mn})$, i.e. the space of all $C^1$ quadratic splines whose restriction to each triangle of $T_{mn}$ is a bivariate polynomial of total degree two. It is well known that dim $S_2^1(T_{mn})=(m+2)(n+2)-1$ \cite{cw,W}.

By means of ${\cal B}_{mn}$, we can define a $C^1$ quadratic B-spline parametric surface of the form
\begin{equation}
{\cal S}(u,v)=\left({\cal S}_x(u,v),{\cal S}_y(u,v),{\cal S}_z(u,v)\right)^{\rm T}=\sum_{i=0}^{m+1}\sum_{j=0}^{n+1}C_{ij}B_{ij}(u,v), \quad (u,v) \in \Omega, \label{superf}
\end{equation}
interpolating the curve network (\ref{rete_curve}). In order to do it, we have to get the control points $C_{ij} \in \RR^3$, $i=0,\ldots,m+1$, $j=0,\ldots,n+1$, whose pre-images in $\Omega$ are the points $(\xi_i,\eta_j)$, defined in (\ref{si}) and (\ref{tj}), respectively (Fig. \ref{par_fdom}).

\begin{figure}[ht]
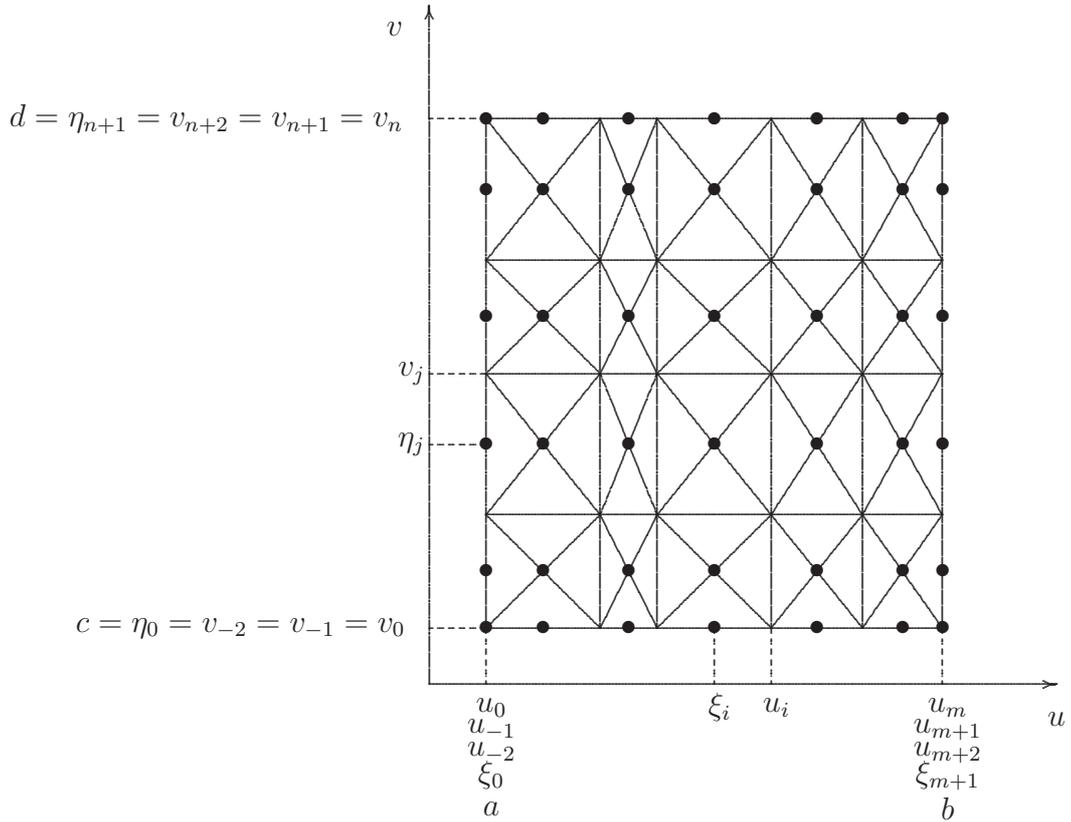

$$
\beginpicture
\setcoordinatesystem units <1.5cm,1.5cm>
\setplotarea x from -1 to 4, y from -1 to 5.5
\setlinear
\plot -0.5 -0.5  4 -0.5 /
\plot -0.5 -0.5  -0.5 4.5 /
\plot 0 0  4 0 /
\plot 0 0  0 4.5 /
\plot  4 4.5  0 4.5 /
\plot  4 4.5 4 0 /
\plot 1 0  1  4.5 /
\plot 1.5 0  1.5  4.5 /
\plot 2.5 0  2.5  4.5 /
\plot 3.3 0  3.3  4.5 /
\plot  0 1  4  1 /
\plot  0 2.25  4  2.25 /
\plot  0 3.25  4  3.25 /
\plot 0 0 1 1 /
\plot 0 1 1 2.25 /
\plot 0 2.25 1 3.25 /
\plot 0 3.25 1 4.5 /
\plot 1 0 1.5 1 /
\plot 1 1 1.5 2.25 /
\plot 1 2.25 1.5 3.25 /
\plot 1 3.25 1.5 4.5 /
\plot 1.5 0 2.5 1 /
\plot 1.5 1 2.5 2.25 /
\plot 1.5 2.25 2.5 3.25 /
\plot 1.5 3.25 2.5 4.5 /
\plot 2.5 0 3.3 1 /
\plot 2.5 1 3.3 2.25 /
\plot 2.5 2.25 3.3 3.25 /
\plot 2.5 3.25 3.3 4.5 /
\plot 3.3 0 4 1 /
\plot 3.3 1 4 2.25 /
\plot 3.3 2.25 4 3.25 /
\plot 3.3 3.25 4 4.5 /
\plot 0 1 1 0 /
\plot 0 2.25 1 1 /
\plot 0 3.25 1 2.25 /
\plot 0 4.5 1 3.25 /
\plot 1 1 1.5 0 /
\plot 1 2.25 1.5 1 /
\plot 1 3.25 1.5 2.25 /
\plot 1 4.5 1.5 3.25 /
\plot 1.5 1 2.5 0 /
\plot 1.5 2.25 2.5 1 /
\plot 1.5 3.25 2.5 2.25 /
\plot 1.5 4.5 2.5 3.25 /
\plot 2.5 1 3.3 0 /
\plot 2.5 2.25 3.3 1 /
\plot 2.5 3.25 3.3 2.25 /
\plot 2.5 4.5 3.3 3.25 /
\plot 3.3 1 4 0 /
\plot 3.3 2.25 4 1 /
\plot 3.3 3.25 4 2.25 /
\plot 3.3 4.5 4 3.25 /
\arrow <5pt> [.1,.6] from -.5 4.5 to -.5 5.5
\arrow <5pt> [.1,.6] from 4 -.5 to  5 -.5
\put {$v$} at -.8  5.3
\put {$u$} at 5  -.8
\setdashes
\setdashpattern <2pt,2pt>
\plot 0 0   0 -0.5 /
\plot 2 0   2 -0.5 /
\plot 2.5 0  2.5  -0.5 /
\plot 4 0   4 -0.5 /
\plot 0 0   -0.5 0 /
\plot 0 1.625  -0.5 1.625 /
\plot 0 2.25  -0.5 2.25 /
\plot 0 4.5  -0.5 4.5 /

\put { $u_i$} at 2.5 -0.7
\put { $v_j$} at -0.7 2.25
\put { $\xi_{i}$} at 2.0 -.7
\put { $\eta_{j}$} at -.7 1.625
\put { $a$} at 0 -1.6
\put { $\xi_0$} at 0 -1.3
\put { $u_0$} at 0 -0.7
\put { $u_{-1}$} at 0 -.9
\put { $u_{-2}$} at 0 -1.1
\put { $u_{m}$} at 4 -.7
\put { $u_{m+1}$} at 4 -.9
\put { $u_{m+2}$} at 4 -1.1
\put { $\xi_{m+1}$} at 4 -1.3
\put { $b$} at 4 -1.6
\put { $c=\eta_0=v_{-2}=v_{-1}=v_0$} at -2.2 0
\put { $d=\eta_{n+1}=v_{n+2}=v_{n+1}=v_{n}$} at -2.5 4.5
\put {$\bullet$} at 0 0
\put {$\bullet$} at 0.5 0
\put {$\bullet$} at 1.25 0
\put {$\bullet$} at  2 0
\put {$\bullet$} at  2.9 0
\put {$\bullet$} at  3.65 0
\put {$\bullet$} at  4 0
\put {$\bullet$} at 0 0.5
\put {$\bullet$} at 0.5 0.5
\put {$\bullet$} at 1.25 0.5
\put {$\bullet$} at  2 0.5
\put {$\bullet$} at  2.9 0.5
\put {$\bullet$} at  3.65 0.5
\put {$\bullet$} at  4 0.5
\put {$\bullet$} at 0 1.625
\put {$\bullet$} at 0.5 1.625
\put {$\bullet$} at 1.25 1.625
\put {$\bullet$} at  2 1.625
\put {$\bullet$} at  2.9 1.625
\put {$\bullet$} at  3.65 1.625
\put {$\bullet$} at  4 1.625
\put {$\bullet$} at 0 2.75
\put {$\bullet$} at 0.5 2.75
\put {$\bullet$} at 1.25 2.75
\put {$\bullet$} at  2 2.75
\put {$\bullet$} at  2.9 2.75
\put {$\bullet$} at  3.65 2.75
\put {$\bullet$} at  4 2.75
\put {$\bullet$} at 0 3.875
\put {$\bullet$} at 0.5 3.875
\put {$\bullet$} at 1.25 3.875
\put {$\bullet$} at  2 3.875
\put {$\bullet$} at  2.9 3.875
\put {$\bullet$} at  3.65 3.875
\put {$\bullet$} at  4 3.875
\put {$\bullet$} at 0 4.5
\put {$\bullet$} at 0.5 4.5
\put {$\bullet$} at 1.25 4.5
\put {$\bullet$} at  2 4.5
\put {$\bullet$} at  2.9 4.5
\put {$\bullet$} at  3.65 4.5
\put {$\bullet$} at  4 4.5
\endpicture
$$

\caption{The criss-cross triangulation $T_{mn}$ and pre-images of surface control points ($\bullet$).}
\label{par_fdom}
\end{figure}

Thus, we interpret the curves (\ref{rete_curve}) as isoparametric curves of ${\cal S}$, i.e.
\begin{eqnarray}
&&{\cal S}(u_r,v)=\phi_r(v),\ r=0,\ldots,m, \label{iso1} \\
&&{\cal S}(u,v_s)=\psi_s(u),\ s=0,\ldots,n \label{iso2}
\end{eqnarray}
and in Theorem \ref{teo1} we deduce the constraints that have to be satisfied by the surface control points $\{C_{ij}\}$, in order to satisfy (\ref{iso1}) and (\ref{iso2}).

\begin{theorem} \label{teo1}
The curves (\ref{rete_curve}) are isoparametric curves of ${\cal S}$ if and only if the control points $\{C_{ij}\}$ in (\ref{superf}) satisfy the following conditions:
\begin{equation}
\sigma_{r+1}C_{r,j}+\sigma_{r+1}'C_{r+1,j}=P_j^{(r)},\ r=1,\ldots,m-1, \ j=1,\ldots,n,\label{c2P}
\end{equation}
\begin{equation}
\tau_{s+1}C_{i,s}+\tau_{s+1}'C_{i,s+1}=Q_i^{(s)},\ s=1,\ldots,n-1, \ i=1,\ldots,m, \label{c2Q}
\end{equation}
\begin{equation}
C_{i0}=Q_i^{(0)},  \ i=0,\ldots,m+1, \label{edge1}
\end{equation}
\begin{equation}
C_{i,n+1}=Q_i^{(n)}, \label{edge2}
\end{equation}
\begin{equation}
C_{0j}=P_j^{(0)},  \ j=0,\ldots,n+1, \label{edge3}
\end{equation}
\begin{equation}
C_{m+1,j}=P_j^{(m)}. \label{edge4}
\end{equation}
\end{theorem}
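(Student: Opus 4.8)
The plan is to fix one isoparametric direction at a time and convert each identity between parametric curves into identities between the B-spline control points $\{C_{ij}\}$, exploiting the fact that the restriction of ${\cal S}$ to a grid line is itself a univariate quadratic $C^1$ spline.

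First I would observe that, for a fixed index $r$, the map $v\mapsto{\cal S}(u_r,v)$ is piecewise polynomial of degree two in $v$ and globally $C^1$; moreover, since the vertical line $u=u_r$ is an edge of ${\cal T}_{mn}$ and meets the cell diagonals only at the grid points $(u_r,v_j)$, its breakpoints are exactly the $v_j$. Hence ${\cal S}(u_r,\cdot)$ lies in the univariate space spanned by $\{B_j(v)\}_{j=0}^{n+2}$, and symmetrically $u\mapsto{\cal S}(u,v_s)\in\mathrm{span}\{B_i(u)\}_{i=0}^{m+2}$. Because $\{B_j\}$ is a basis, the identity ${\cal S}(u_r,v)=\phi_r(v)=\sum_j P_j^{(r)}B_j(v)$ holds for all $v$ if and only if the two families of B-spline coefficients coincide; this linear independence is what makes both implications of the statement fall out simultaneously, so there is no need to argue the two directions separately.

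The key step, and the one I expect to be the main obstacle, is to compute those coefficients, i.e. to establish the restriction identity
\[
B_{ij}(u_r,v)=B_i(u_r)\,B_j(v),\qquad B_{ij}(u,v_s)=B_i(u)\,B_j(v_s),
\]
which expresses the trace of each bivariate basis function on a grid line as the product of the corresponding univariate B-spline values; this is where the specific geometry of the criss-cross B-splines enters, and I would deduce it from their Bernstein--B\'ezier representation (or invoke the restriction property recorded in \cite{DL1,S1,S3}). Combining it with the univariate values for equally spaced interior knots, $B_i(u_r)=\tfrac12$ for $i\in\{r,r+1\}$ and $B_i(u_r)=0$ otherwise, and with the clamped-endpoint values $B_i(a)=\delta_{i,0}$ and $B_i(b)=\delta_{i,m+2}$ coming from the triple knots of $U$ (and symmetrically in $V$), gives
\[
{\cal S}(u_r,v)=\sum_{j=0}^{n+2}\Big(\sum_{i=0}^{m+2}C_{ij}\,B_i(u_r)\Big)B_j(v).
\]

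Matching coefficients against $\phi_r$ then yields, on the boundary lines $u=a$ and $u=b$, the interpolation relations $C_{0j}=P_j^{(0)}$ and $C_{m+2,j}=P_j^{(m+1)}$, that is (\ref{edge3})--(\ref{edge4}), while on an interior line $u=u_r$ it gives $\tfrac12(C_{r,j}+C_{r+1,j})=P_j^{(r)}$; the symmetric argument in $v$ produces (\ref{edge1})--(\ref{edge2}) together with $\tfrac12(C_{i,s}+C_{i,s+1})=Q_i^{(s)}$. It then remains only to reconcile the index ranges with those stated in (\ref{c2P})--(\ref{c2Q}). The interior averaging relations hold a priori for every $j=0,\dots,n+2$ (resp. $i=0,\dots,m+2$), but on the extreme rows and columns $j\in\{0,n+2\}$ (resp. $i\in\{0,m+2\}$) the involved coefficients are already fixed by the edge conditions (\ref{edge1})--(\ref{edge4}); substituting them turns these particular relations into the compatibility conditions (\ref{cond222})--(\ref{cond333}), which hold by hypothesis C2 and hence impose nothing new, while the same substitution shows the four corners (for instance $C_{00}$, constrained by both (\ref{edge1}) and (\ref{edge3})) are consistent precisely because of (\ref{cond444}). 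Discarding these redundant cases leaves exactly the ranges $r=1,\dots,m$, $j=1,\dots,n+1$ of (\ref{c2P}) and $s=1,\dots,n$, $i=1,\dots,m+1$ of (\ref{c2Q}), which completes the argument.
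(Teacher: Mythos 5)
Your proposal is correct and takes essentially the same route as the paper's proof: both restrict ${\cal S}$ to the knot lines $u=u_r$, $v=v_s$, use the B-form (Bernstein--B\'ezier representation) of the criss-cross B-splines to get $B_{rj}(u_r,v)=B_{r+1,j}(u_r,v)=\tfrac12 B_j(v)$ together with the locality and boundary cases (your product formula $B_{ij}(u_r,v)=B_i(u_r)B_j(v)$ at grid lines is exactly this data repackaged), and then match univariate B-spline coefficients against $\phi_r$ and $\psi_s$. Your two refinements --- obtaining both implications at once from linear independence of the $B_j$, and explicitly showing that the discarded index cases $j\in\{0,n+2\}$ (and the corners) reduce via (\ref{edge1})--(\ref{edge4}) to the compatibility conditions (\ref{cond222})--(\ref{cond444}) assumed in C2 --- merely spell out what the paper compresses into ``the sufficient condition can be shown in a similar way.''
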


\begin{proof} From the locality of the bivariate B-splines of ${\cal B}_{mn}$, $B_{ij}(u_r,v)\equiv 0$ for $i<r,\ i>r+1$ and any $j$. Then, from (\ref{superf}), we can write
\begin{equation}\label{cPjr}
{\cal S}(u_r,v)=\sum_{i=0}^{m+1}\sum_{j=0}^{n+1}C_{i,j}B_{i,j}(u_r,v)=\sum_{j=0}^{n+1}\left({C_{r,j}}B_{r,j}(u_r,v)+{C_{r+1,j}}B_{r+1,j}(u_r,v)\right).
\end{equation}
Considering the Bernstein-B\'ezier (BB-) coefficients of $B_{rj}(u_r,v)$, $B_{r+1,j}(u_r,v)$ and  $B_j(v|V)$, for $r=1,\ldots,m-1$ and $j=0,\ldots,n+1$ \cite{dls1,DL4}, we get 
\begin{equation}\label{equalB2B}
B_{r,j}(u_r,v)=\sigma_{r+1} B_j(v|V), \qquad     B_{r+1,j}(u_r,v)=\sigma_{r+1}' B_j(v|V). 
\end{equation}
Then, from (\ref{cPjr}) and (\ref{equalB2B}), we obtain
\begin{equation}\label{cPjr1}
{\cal S}(u_r,v)=\sum_{j=0}^{n+1}\left(\sigma_{r+1}C_{r,j}+\sigma_{r+1}'C_{r+1,j}\right) B_j(v|V). 
\end{equation}
Therefore, from (\ref{iso1}), (\ref{rete_curve}) and (\ref{cPjr1}), we get (\ref{c2P}).

Now, by the same argument, since
\begin{equation}\label{cPjr0}
{\cal S}(u_0,v)=\sum_{i=0}^{m+1}\sum_{j=0}^{n+1}C_{i,j}B_{i,j}(u_0,v)=\sum_{j=0}^{n+1}{C_{0,j}}B_{0,j}(u_0,v)
\end{equation}
and $B_{0,j}(u_0,v)=B_j(v|V)$ \cite{dls1,DL4}, from (\ref{iso1}), (\ref{rete_curve}) and (\ref{cPjr0}), we get (\ref{edge3}).

Similarly, by setting $u=u_{m}$ in (\ref{superf}), (\ref{edge4}) holds.

By using the same logical scheme, from (\ref{iso2}), we can show (\ref{c2Q}), (\ref{edge1}) and (\ref{edge2}). 
\end{proof}

\bigskip

From Theorem \ref{teo1} we can immediately get that
$$ 
\begin{array}{l} 
C_{00}=Q_0^{(0)}=P_0^{(0)}, \qquad C_{m+1,0}=Q_{m+1}^{(0)}=P_0^{(m)},\\
C_{0,n+1}=Q_0^{(n)}=P_{n+1}^{(0)}, \qquad C_{m+1,n+1}=Q_{m+1}^{(n)}=P_{n+1}^{(m)}.
\end{array}
$$

\bigskip

Now we want to show that the surface ${\cal S}$ is unique and it can be expressed as a linear combination of the $B_{ij}$'s, with coefficients depending only on the curve network control points $\{P_j^{(r)}\}_{j=0}^{n+1}$ and $\{Q_i^{(s)}\}_{i=0}^{m+1}$. 

Firstly we need the following lemma.

\begin{lemma} \label{lem1}
If curves (\ref{rete_curve}) are isoparametric curves for the surface (\ref{superf}), then for any $C_{11}\in\RR^3$
\begin{equation} 
C_{ij}=\Gamma_{ij}+(-1)^{i+j}C_{11} \frac{h_i}{h_{1}} \frac{k_j}{k_{1}} , \label{cij}
\end{equation}
$i=1,\ldots,m$, $j=1,\ldots,n$, with
\begin{equation}\label{gammaij}
\begin{array}{ll}
\Gamma_{ij}=& \displaystyle\sum_{r=1}^{i-1}(-1)^{r+1}\frac{(h_{i-r}+h_{i-r+1})h_i}{h_{i-r}h_{i-r+1}} P_j^{(i-r)}\\
& \\
           & + (-1)^i  \displaystyle\frac{h_i}{h_{1}}  \displaystyle\sum_{s=1}^{j-1}(-1)^s \frac{(k_{j-s}+k_{j-s+1})k_j}{k_{j-s}k_{j-s+1}}Q_1^{(j-s)}
\end{array}
\end{equation}
and $\sum_{\ell=1}^0\cdot=0$.
\end{lemma}

\begin{proof}
Applying repeatedly (\ref{c2Q}), for $i=1$, we can write (Fig. \ref{fig:recurs})
\begin{equation}\label{c1jrecurr}
C_{1,j}=\sum_{s=1}^{j-1}(-1)^{s+1} \frac{Q_1^{(j-s)}}{\tau_{j-s+1}'} \prod_{k=1}^{s-1} \frac{\tau_{j-k+1}}{\tau_{j-k+1}'} + (-1)^{j+1}C_{11} \prod_{s=1}^{j-1} \frac{\tau_{j-s+1}}{\tau_{j-s+1}'},
\end{equation}
where $\prod_{\ell=1}^0\cdot=1$ and $j=1,\ldots,n$.

Then, similarly, from (\ref{c2P}), it results (Fig. \ref{fig:recurs})
\begin{equation}\label{cijrecurr}
C_{i,j}=\sum_{r=1}^{i-1}(-1)^{r+1}\frac{P_j^{(i-r)}}{\sigma_{i-r+1}'} \prod_{k=1}^{r-1} \frac{\sigma_{i-k+1}}{\sigma_{i-k+1}'}+(-1)^{i+1}C_{1j} \prod_{r=1}^{i-1} \frac{\sigma_{i-r+1}}{\sigma_{i-r+1}'},
\end{equation}
where $i=1,\ldots,m$. Since 
$$
\frac{\sigma_\ell}{\sigma_\ell'}=\frac{h_\ell}{h_{\ell-1}}, \qquad  \frac{\tau_\ell}{\tau_\ell'}=\frac{k_\ell}{k_{\ell-1}},
$$
then
\begin{equation}\label{prod4}
\begin{array}{ll}
\displaystyle\prod_{r=1}^{i-1} \frac{\sigma_{i-r+1}}{\sigma_{i-r+1}'}=\frac{h_i}{h_{1}}, & \displaystyle\prod_{k=1}^{r-1} \frac{\sigma_{i-k+1}}{\sigma_{i-k+1}'}=\frac{h_i}{h_{i-r+1}},\\
& \\
\displaystyle\prod_{s=1}^{j-1} \frac{\tau_{j-s+1}}{\tau_{j-s+1}'}=\frac{k_j}{k_{1}}, & \displaystyle\prod_{k=1}^{s-1} \frac{\tau_{j-k+1}}{\tau_{j-k+1}'}=\frac{k_j}{k_{j-s+1}}.
\end{array}
\end{equation}
Taking into account (\ref{prod4}) and substituting (\ref{c1jrecurr}) into (\ref{cijrecurr}), we get (\ref{cij}) and (\ref{gammaij}).
\end{proof}

\begin{figure}[ht!]
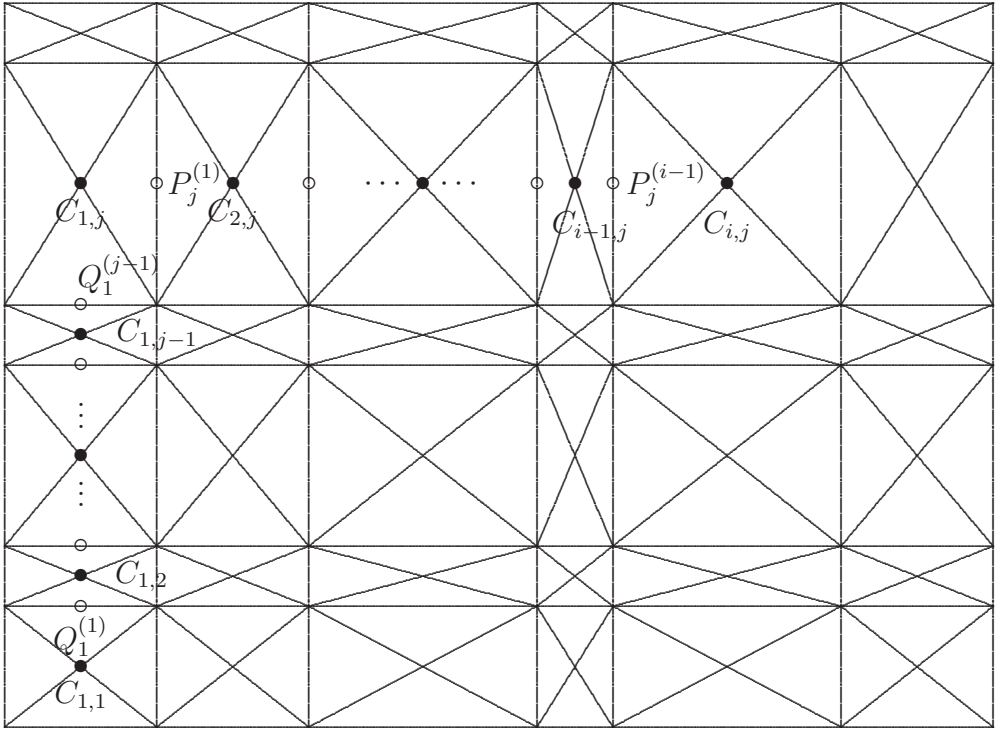

$$
\beginpicture
\setcoordinatesystem units <1cm,0.8cm>
\setplotarea x from -1 to 12, y from -2 to 10
\setlinear
\plot -1 -2  12 -2 /
\plot -1 -2  -1 10 /
\plot -1 10  12 10 /
\plot 12 10  12 -2 /
\plot 1 -2  1 10 /
\plot 3 -2  3 10 /
\plot 6 -2  6 10 /
\plot 7 -2  7 10 /
\plot 10 -2  10 10 /
\plot -1 0  12 0 /
\plot -1 1  12 1 /
\plot -1 4  12 4 /
\plot -1 5  12 5 /
\plot -1 9  12 9 /
\plot -1 -2  1 0 /
\plot -1 0  1 1 /
\plot -1 1  1 4 /
\plot -1 4  1 5 /
\plot -1 5  1 9 /
\plot -1 9  1 10 /
\plot 1 -2 3 0 /
\plot 1 0 3 1 /
\plot 1 1 3 4 /
\plot 1 4 3 5 /
\plot 1 5 3 9 /
\plot 1 9 3 10 /
\plot 3 -2 6 0 /
\plot 3 0 6 1 /
\plot 3 1 6 4 /
\plot 3 4 6 5 /
\plot 3 5 6 9 /
\plot 3 9 6 10 /
\plot 6 -2 7 0 /
\plot 6 0 7 1 /
\plot 6 1 7 4 /
\plot 6 4 7 5 /
\plot 6 5 7 9 /
\plot 6 9 7 10 /
\plot 7 -2 10 0 /
\plot 7 0 10 1 /
\plot 7 1 10 4 /
\plot 7 4 10 5 /
\plot 7 5 10 9 /
\plot 7 9 10 10 /
\plot 10 -2 12 0 /
\plot 10 0 12 1 /
\plot 10 1 12 4 /
\plot 10 4 12 5 /
\plot 10 5 12 9 /
\plot 10 9 12 10 /
\plot -1 0  1 -2 /
\plot -1 1  1 0 /
\plot -1 4  1 1 /
\plot -1 5  1 4 /
\plot -1 9  1 5 /
\plot -1 10 1 9 /
\plot 1 0  3 -2 /
\plot 1 1  3 0 /
\plot 1 4  3 1 /
\plot 1 5  3 4 /
\plot 1 9  3 5 /
\plot 1 10 3 9 /
\plot 3 0  6 -2 /
\plot 3 1  6 0 /
\plot 3 4  6 1 /
\plot 3 5  6 4 /
\plot 3 9  6 5 /
\plot 3 10 6 9 /
\plot 6 0  7 -2 /
\plot 6 1  7 0 /
\plot 6 4  7 1 /
\plot 6 5  7 4 /
\plot 6 9  7 5 /
\plot 6 10 7 9 /
\plot 7 0  10 -2 /
\plot 7 1  10 0 /
\plot 7 4  10 1 /
\plot 7 5  10 4 /
\plot 7 9  10 5 /
\plot 7 10 10 9 /
\plot 10 0  12 -2 /
\plot 10 1  12 0 /
\plot 10 4  12 1 /
\plot 10 5  12 4 /
\plot 10 9  12 5 /
\plot 10 10 12 9 /
\put {$\circ$} at 1 7
\put {$\circ$} at 3 7
\put {$\circ$} at 6 7
\put {$\circ$} at 7 7
\put {$\circ$} at 0 0
\put {$\circ$} at 0 1
\put {$\circ$} at 0 4
\put {$\circ$} at 0 5
\put {$\bullet$} at 0 7
\put {$\bullet$} at 2 7
\put {$\bullet$} at 4.5 7
\put {$\bullet$} at 6.5 7
\put {$\bullet$} at 8.5 7
\put {$\bullet$} at 0 -1
\put {$\bullet$} at 0 0.5
\put {$\bullet$} at 0 2.5
\put {$\bullet$} at 0 4.5
\put {$\cdots$} at 4 7
\put {$\cdots$} at 5 7
\put {$\vdots$} at 0 2
\put {$\vdots$} at 0 3.3
\put {{$C_{1,1}$}} at 0 -1.5
\put {{$C_{1,2}$}} at 0.8 0.5
\put {{$C_{1,j-1}$}} at 1 4.5
\put {{$C_{1,j}$}} at 0 6.5
\put {{$C_{2,j}$}} at 2 6.5
\put {{$C_{i-1,j}$}} at 6.7 6.3
\put {{$C_{i,j}$}} at 8.5 6.3
\put {{$P_{j}^{(i-1)}$}} at 7.7 7
\put {{$Q_{1}^{(j-1)}$}} at 0.5 5.5
\put {{$P_{j}^{(1)}$}} at 1.5 7
\put {{$Q_{1}^{(1)}$}} at 0 -0.5
\endpicture
$$
\caption{Path reconstruction to get $C_{ij}$ by (\ref{cij}), starting from $C_{11}$.}
\label{fig:recurs}
\end{figure}

\begin{remark}\label{rru} In case of uniform partitions, i.e. $h_i=h$ and $k_j=k$, for any $i$, $j$, the control points $\{C_{ij}\}$ in Lemma \ref{lem1} have the following simpler expression:
$$
C_{ij}=\Gamma_{ij}+(-1)^{i+j}C_{11},
$$
$i=1,\ldots,m$, $j=1,\ldots,n$, with
$$
\Gamma_{ij}= 2\left[ \sum_{r=1}^{i-1}(-1)^{r+1}P_j^{(i-r)}+ (-1)^i \sum_{s=1}^{j-1}(-1)^s Q_1^{(j-s)}\right].
$$
\end{remark}

Now we are able to give an expression of the surface ${\cal S}$, depending only on the curve network control points $\{P_j^{(r)}\}_{j=0}^{n+1}$ and $\{Q_i^{(s)}\}_{i=0}^{m+1}$.

\begin{theorem} \label{teo2}
Given the B-spline curve network (\ref{rete_curve}), satisfying the compatibility conditions C1. and C2., there is a unique surface (\ref{superf}), having (\ref{rete_curve}) as isoparametric curves. It is independent of $C_{11}$ choice and it can be written as follows:
\begin{equation}\label{Suvsurf} 
{\cal S}(u,v)= {\cal S}_b(u,v) + {\cal S}_{\Gamma}(u,v),
\end{equation}
where
$$
	\begin{array}{ll} 
{\cal S}_b(u,v)=&\displaystyle\sum_{j=0}^{n+1}\left( P_j^{(0)}B_{0j}(u,v)+P_j^{(m)}B_{m+1,j}(u,v)\right)\\
& +\displaystyle\sum_{i=1}^{m} \left(Q_i^{(0)}B_{i0}(u,v)+Q_i^{(n)}B_{i,n+1}(u,v)\right),\\
{\cal S}_{\Gamma}(u,v)=& \displaystyle\sum_{i=1}^{m}\displaystyle\sum_{j=1}^{n}\Gamma_{ij}B_{ij}(u,v),\\
\end{array}
$$
with $\Gamma_{ij}$  defined in (\ref{gammaij}).
\end{theorem}

\begin{proof}
From Theorem \ref{teo1} and Lemma \ref{lem1}, the surface (\ref{superf}) can be written as 
\begin{eqnarray} 
{\cal S}(u,v)&=&\sum_{j=0}^{n+1} \left(P_j^{(0)}B_{0j}(u,v)+P_j^{(m)}B_{m+1,j}(u,v)\right)\nonumber\\
&+&\sum_{i=1}^{m} \left(Q_i^{(0)}B_{i0}(u,v)+Q_i^{(n)}B_{i,n+1}(u,v)\right) \label{Suv1} \\
&+&\sum_{i=1}^{m}\sum_{j=1}^{n}\Gamma_{ij}B_{ij}(u,v)+\frac{C_{11}}{h_1k_1}\sum_{i=1}^{m}\sum_{j=1}^{n}(-1)^{i+j}h_ik_jB_{ij}(u,v).\nonumber
\end{eqnarray}
We recall that, in $S_2^1(T_{mn})$, the $B_{ij}$'s are linearly dependent, the dependence relationship being \cite{W,HW}:
$$
\sum_{i=1}^{m}\sum_{j=1}^{n}(-1)^{i+j}h_ik_jB_{ij}(u,v)=0.
$$
Therefore, from (\ref{Suv1}), we obtain (\ref{Suvsurf}), i.e. the interpolating surface ${\cal S}$ is unique and it is independent of $C_{11}$ choice.
\end{proof}

Now, we want to study the behaviour of the $C_{ij}$ generation process by (\ref{cij}), with respect to the round-off error growth. Linear growth of error is usually unavoidable, while exponential growth should be avoided, since this leads to unacceptable inaccuracies \cite[Chap. 1, p.32]{Bur}. In the following theorem, we show that such a growth is linear.

\begin{theorem}\label{rr2} If the sequence of partitions $\{U \times V\}$ of $\Omega$ is $A$-quasi uniform, i.e. there exists a constant $A\geq1$ such that $0 < \max_{i,j}\{h_i, k_j \} /  \min_{i,j}\{h_i, k_j \} \leq A$, then the round-off error growth is linear. \end{theorem}

\begin{proof} Suppose the $P_j^{(r)}$'s and $Q_i^{(s)}$'s are affected by some perturbations (random noise), say $\epsilon_{P_j}^{(r)}$ and $\epsilon_{Q_i}^{(s)}$, with $\left\|\epsilon_{P_j}^{(r)}\right\|_\infty$, $\left\|\epsilon_{Q_i}^{(s)}\right\|_\infty \leq \epsilon$, $\forall i,j,r,s$. 

Then, from (\ref{cij}) and (\ref{gammaij}), instead of $\{C_{ij}\}$, the following sequence $\{\overline C_{ij}\}$ is generated:
\begin{equation} 
\overline C_{ij}=\overline \Gamma_{ij}+(-1)^{i+j}C_{11} \frac{h_i}{h_{1}} \frac{k_j}{k_{1}} , \label{cijb}
\end{equation}
$i=1,\ldots,m$, $j=1,\ldots,n$, with
\begin{equation}\label{gammaijb}
\begin{array}{ll}
\overline \Gamma_{ij}=& \displaystyle\sum_{r=1}^{i-1}(-1)^{r+1}\frac{(h_{i-r}+h_{i-r+1})h_i}{h_{i-r}h_{i-r+1}} \left(P_j^{(i-r)}+\epsilon_{P_j}^{(i-r)}\right)\\
& \\
           & + (-1)^i  \displaystyle\frac{h_i}{h_{1}}  \displaystyle\sum_{s=1}^{j-1}(-1)^s \frac{(k_{j-s}+k_{j-s+1})k_j}{k_{j-s}k_{j-s+1}}\left(Q_1^{(j-s)}+\epsilon_{Q_1}^{(j-s)}\right).
\end{array}
\end{equation}
Whence, from (\ref{edge1})-(\ref{edge4}), (\ref{cijb}) and (\ref{gammaijb}), we get
$$
\begin{array}{ll}
\left\| C_{ij}- \overline C_{ij}\right\|_\infty & = \left\|\Gamma_{ij}- \overline \Gamma_{ij} \right\|_\infty  =  \left\|  \displaystyle\sum_{r=1}^{i-1}(-1)^{r+1}\frac{(h_{i-r}+h_{i-r+1})h_i}{h_{i-r}h_{i-r+1}}\epsilon_{P_j}^{(i-r)}\right. \\
  & \\
  & \left.+ (-1)^i  \displaystyle\frac{h_i}{h_{1}}  \displaystyle\sum_{s=1}^{j-1}(-1)^s \frac{(k_{j-s}+k_{j-s+1})k_j}{k_{j-s}k_{j-s+1}} \epsilon_{Q_1}^{(j-s)}\right\|_\infty.
\end{array} 
$$
Since $\{U \times V\}$ is $A$-quasi uniform, we get
$$
\begin{array}{ll}
\left\| C_{ij}- \overline C_{ij}\right\|_\infty & \leq \epsilon \left[\displaystyle\sum_{r=1}^{i-1}\frac{2(\max_{i}\{h_i\})}{(\min_{i}\{h_i\})} +  \displaystyle\frac{\max_{i}\{h_i\}}{\min_{i}\{h_i\}}  \displaystyle\sum_{s=1}^{j-1}\frac{2(\max_{j}\{k_j\})}{(\min_{j}\{k_j\})}\right]\\
  & \\
  &  \leq 2\epsilon (A(i-1)+A^2(j-1)) \leq\bar{A} (i+j-2)\epsilon,
\end{array}
$$
with $\bar{A}= 2 A^2$. Therefore, the round-off error growth is linear.
\end{proof}

We can remark that if the sequence of partitions  $\{U \times V\}$ is uniform, i.e. $A=1$, then  Theorem \ref{rr2} holds.

\section{Applications}\label{res_num}

In this section we present several applications. 

In the first one we validate the theoretical results of Theorem \ref{teo2}, i.e. we verify that the surface (\ref{superf}) is unique and it is independent of $C_{11}$ choice.

In the second one we propose a comparison with two other methods based on biquadratic tensor product B-splines. 

Finally, with the third one we want to underline that the rectangular topology does not limit the shape of the curve network. Indeed we will consider a curve net where three-sided and four-sided facet are present.

It is easy to verify numerically that the given curve networks in all above applications are isoparametric curves of the generated surfaces.

\subsection{Validation of theoretical results of Theorem \ref{teo2}}

It is well known that the control points of a spline surface usually should give an idea of its shape and its possible symmetries. In (\ref{cij})-(\ref{gammaij}) we obtain all control points of the spline surface interpolating the network (\ref{rete_curve}) depending on the curve control points and on $C_{11}$. Then different choices of $C_{11}$ lead to different surface control points, some of which do not respect such shape property. However in this application we are not interested in the surface control point shape, because here we just want to underline the uniqueness of the surface (\ref{superf}), interpolating (\ref{rete_curve}). For this reason in the following example we take two different $C_{11}$ that provide two quite different control point sets, but the same surface interpolating the curve network, as proved in Theorem \ref{teo2}.

We consider $m=18$, $n=4$, the two knot vectors $U=\{u_i\}_{i=-2}^{20}$, $V=\{v_j\}_{j=-2}^{6}$, as in (\ref{vetU}) and (\ref{vetV}), with $u_i=i$, $i=0,\ldots,18$ and $v_j=j$, $j=0,\ldots,4$ and the B-spline curve network of type (\ref{rete_curve}) shown in Fig. \ref{figg4} with given control points $\{P_j^{(r)}\}_{j=0}^5$, $r=0,\dots,18$ and $\{Q_i^{(s)}\}_{i=0}^{19}$, $s=0,\dots,4$, $P_j^{(r)}$, $Q_i^{(s)} \in \RR^3$.

The same quadratic spline surface (\ref{superf}), interpolating the above curve net and  defined by the bivariate B-splines spanning $S_{2}^{1}(T_{18,4})$, is obtained both assuming $C_{11}=(1.5,1.5,32)$ (Fig. \ref{figg5}$(a)$) and assuming $C_{11}=(1,0,30)$  (Fig. \ref{figg5}$(b)$), as we have also numerically verified.
Moreover, we can note how very different are the surface control points corresponding to the different choices of $C_{11}$. In particular, in order to obtain a control net that gives an idea of the surface shape, we choose $C_{11}$ by minimizing the area of the bilinear surface representing the control net, as done for obtaining the net shown in Fig. \ref{figg5}$(a)$.

\begin{figure}[!ht] 
\centering\includegraphics[height=7cm]{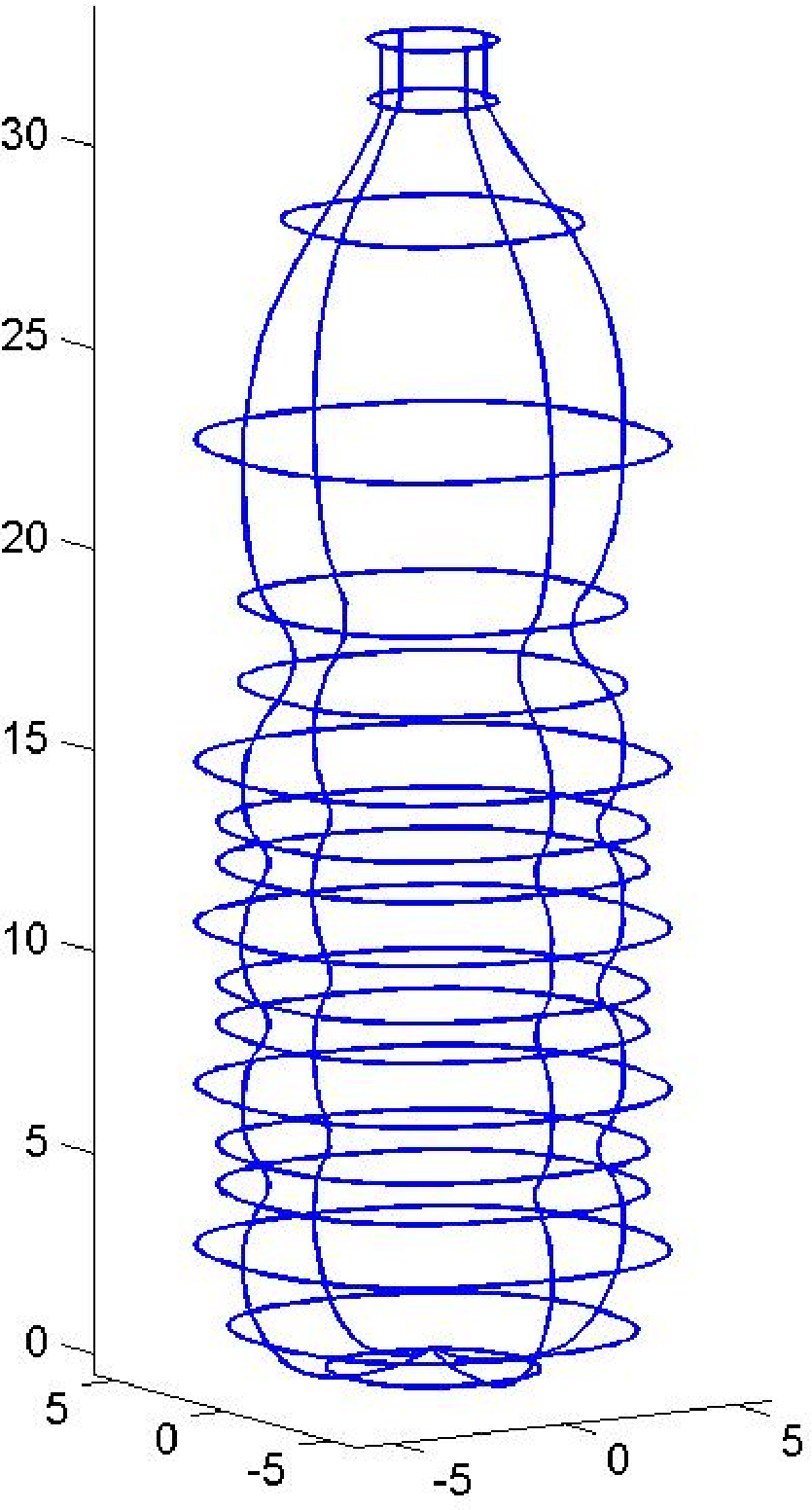}
\caption{The curve network.}
\label{figg4}
\end{figure}

\begin{figure}[ht]
\begin{minipage}{60mm} 
\centering\includegraphics[height=7cm]{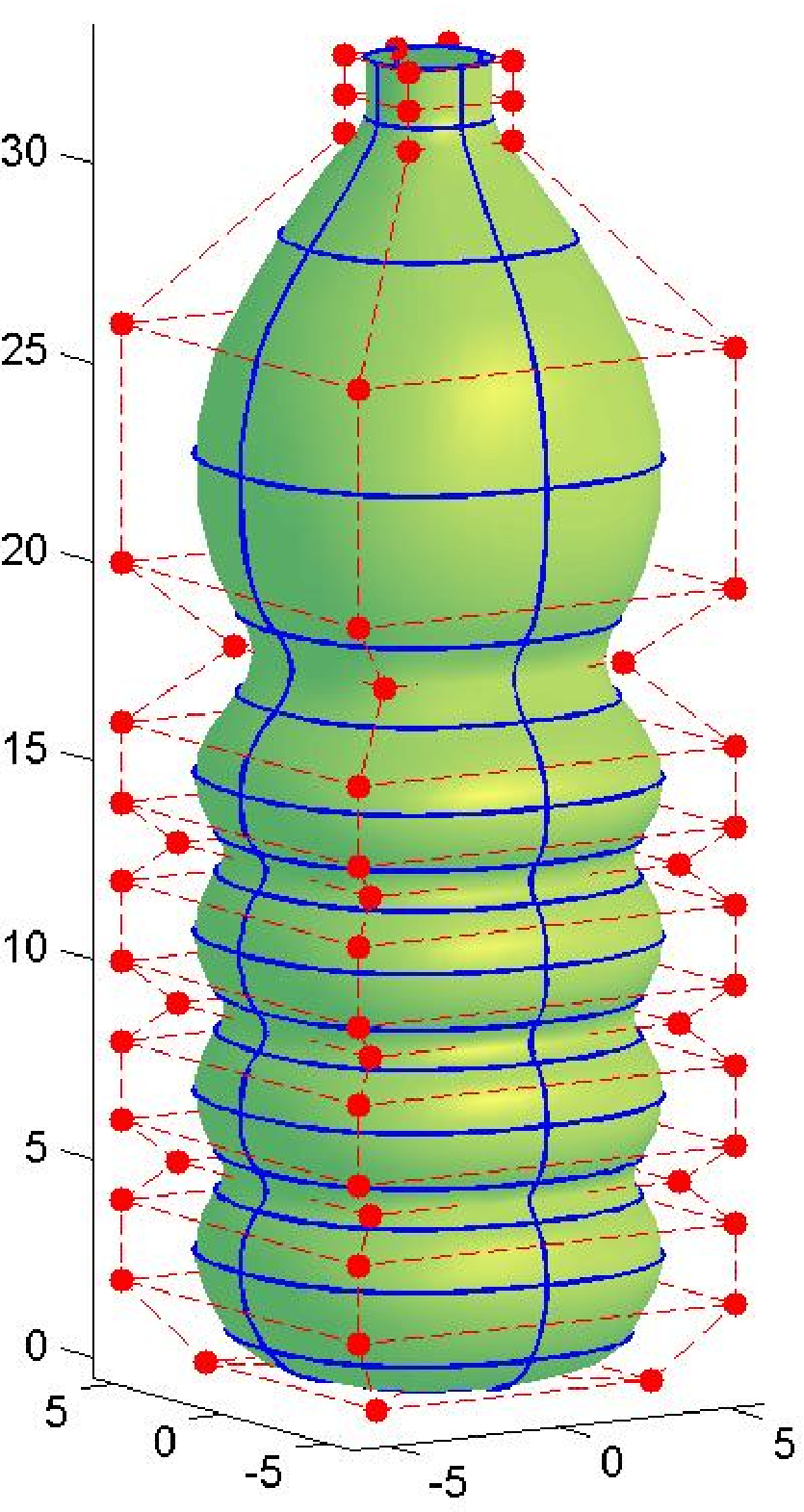}
\centerline{$(a)$}
\end{minipage}
\hfil
\begin{minipage}{60mm}
\centering\includegraphics[height=7cm]{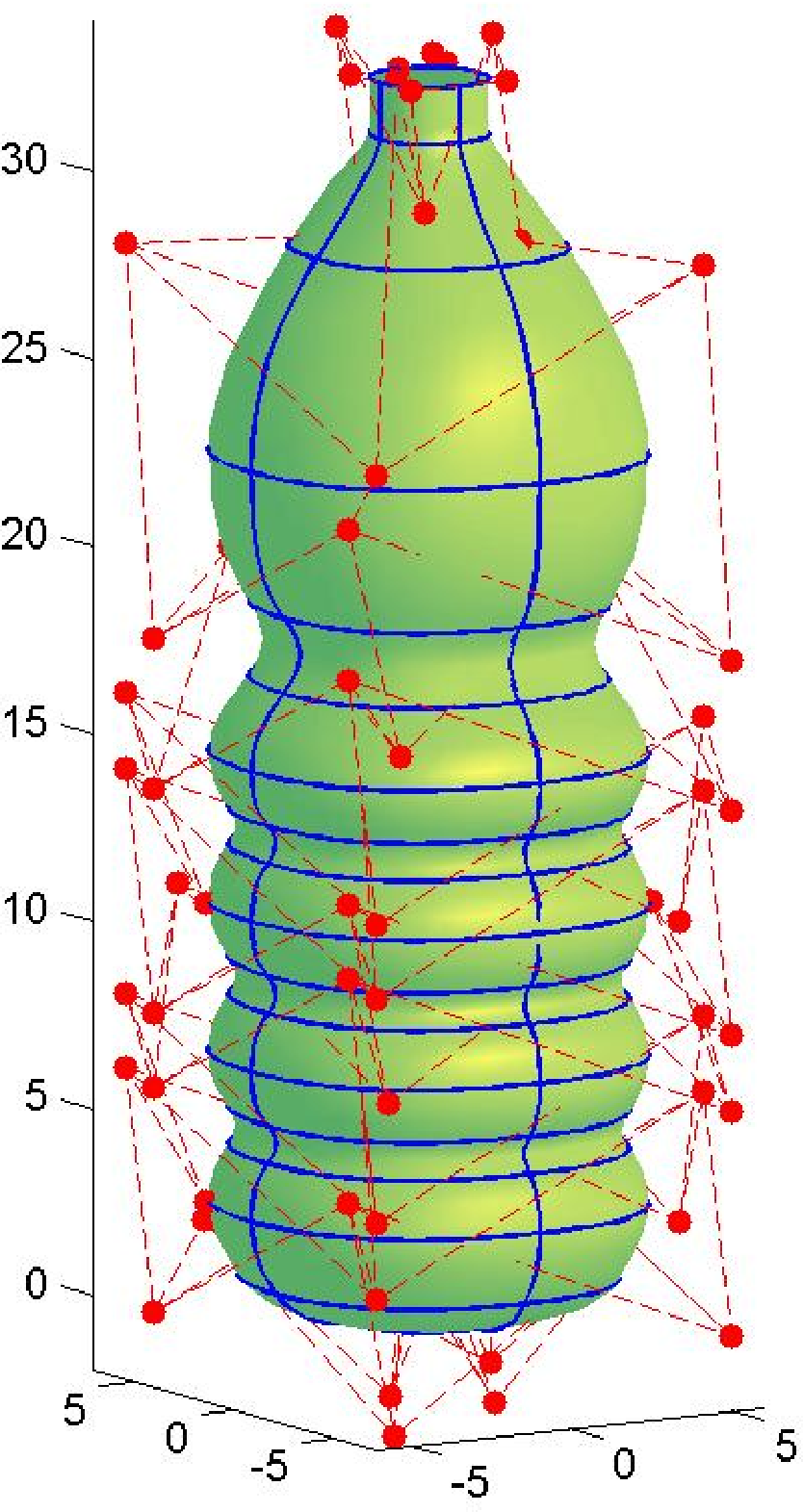}
\centerline{$(b)$}
\end{minipage}
\caption{The surfaces ${\cal S}$ interpolating the curve network with $(a)$ $C_{11}=(1.5,1.5,32)$ and $(b)$ $C_{11}=(1,0,30)$.}
\label{figg5}
\end{figure}

\subsection{Comparisons with other spline methods}

In this section we compare our method with two other ones, based on classical biquadratic tensor product B-splines: i) the first one provides a tensor product B-spline surface after a minimization process; ii) the second one consists in the construction of a Gordon-type $C^1$ biquadratic B-spline surface, \cite[Chap. 10]{pie}.

\bigskip

i) We assume the classical biquadratic tensor product B-splines on $\{U\times V\}$ as blending functions for the surface, interpolating the curve network (\ref{rete_curve}).

Let $R_{mn}$ be the rectangular partition of $\Omega=[a,b]\times[c,d]$, based on the knots $\{U\times V\}$, defined in (\ref{vetU}), (\ref{vetV}), and let $S_{2,2}^{1,1}(R_{mn})$ be the space of all biquadratic $C^1$ tensor product splines, whose restriction to each subrectangle of $R_{mn}$ is a bivariate polynomial of coordinate degree two. It is well known that dim $S_{2,2}^{1,1}(R_{mn})=(m+2)(n+2)$ and its B-spline basis is the set $\{B^*_{ij}(u,v)=B_i(u|U) \cdot B_j(v|V),\ i=0, \ldots, m+1, \ j=0, \ldots, n+1\}$, with $B_i(u|U)$ and $B_j(v|V)$ defined in Section \ref{Sect2}.

In this case, we want to construct a $C^1$ biquadratic spline surface 
\begin{equation}
{\cal T}(u,v)=\left({\cal T}_x(u,v),{\cal T}_y(u,v),{\cal T}_z(u,v)\right)^{\rm T}=\sum_{i=0}^{m+1}\sum_{j=0}^{n+1}C^*_{ij}B^*_{ij}(u,v), \label{superf_tp}
\end{equation}
interpolating the curve network (\ref{rete_curve}).

We can easily verify that results analogous to those of Theorem \ref{teo1} and Lemma \ref{lem1} still hold for ${\cal T}$, by substituting $C_{ij}$ with $C^*_{ij}$ and $B_{ij}(u,v)$ with $B^*_{ij}(u,v)$, while the corresponding results of Theorem \ref{teo2} do not hold any more. Indeed, since for tensor product quadratic B-splines 
\begin{equation}\label{ss1}
{\cal T}_1(u,v)=\sum_{i=1}^{m}\sum_{j=1}^{n}(-1)^{i+j}h_ik_jB^*_{ij}(u,v)
\end{equation}
is different from zero, because the $B^*_{ij}$'s are linearly independent, then we can write
\begin{equation}\label{Suv1tp}
{\cal T}(u,v)= {\cal T}_b(u,v) + {\cal T}_{\Gamma}(u,v) + \frac{C^*_{11}}{h_1k_1} {\cal T}_{1}(u,v),
\end{equation}
with ${\cal T}_1$ defined in (\ref{ss1}) and
$$
\begin{array}{ll} 
{\cal T}_b(u,v)=&\displaystyle\sum_{j=0}^{n+1}\left(P_j^{(0)}B^*_{0j}(u,v)+P_j^{(m)}B^*_{m+1,j}(u,v)\right)\\
& +\displaystyle\sum_{i=1}^{m} \left(Q_i^{(0)}B^*_{i0}(u,v)+Q_i^{(n)}B^*_{i,n+1}(u,v)\right),\\
{\cal T}_{\Gamma}(u,v)=& \displaystyle\sum_{i=1}^{m}\displaystyle\sum_{j=1}^{n}\Gamma_{ij}B^*_{ij}(u,v),\\
\end{array}
$$
where $\Gamma_{ij}$ is defined as in (\ref{gammaij}).

Therefore, the surface ${\cal T}$ is not unique, because it depends on the choice of $C^*_{11} \in \RR^3$, that can be performed imposing several kinds of constraints.

Here, we choose $C^*_{11}$ such that the surface is  ``smooth'', i.e. the energy of a thin plate described by the surface is minimal, as reported in literature, for instance in \cite{had}. Therefore, we can consider an approximation $\Pi$ of the thin plate energy given by
$$
\Pi=\int_a^b\int_c^d [({\cal T}^{uu}(u,v))^2+2({\cal T}^{uv}(u,v))^2+({\cal T}^{vv}(u,v))^2]{\rm d}u{\rm d}v, 
$$
with
$$
{\cal T}^{uu}(u,v)=\frac{\partial^2{\cal T}(u,v)}{\partial u^2}, \, {\cal T}^{uv}(u,v)=\frac{\partial^2{\cal T}(u,v)}{\partial u\partial v}, \, {\cal T}^{vv}(u,v)=\frac{\partial^2{\cal T}(u,v)}{\partial v^2}
$$ 
and choose $C^*_{11}$ by minimizing $\Pi$ with respect to $C^*_{11}$.  By setting $\frac{{\rm d} \Pi}{{\rm d} C^*_{11}}=0$, we obtain
\begin{equation}\label{cc11}
C^*_{11}=-h_1k_1\frac{\int_a^b\int_c^d [({\cal T}_b^{uu}+{\cal T}_\Gamma^{uu}){\cal T}_1^{uu}+2({\cal T}_b^{uv}+{\cal T}_\Gamma^{uv}){\cal T}_1^{uv}+({\cal T}_b^{vv}+{\cal T}_\Gamma^{vv}){\cal T}_1^{vv}]}{\int_a^b\int_c^d [({\cal T}_1^{uu})^2+2({\cal T}_1^{uv})^2+({\cal T}_1^{vv})^2]}.
\end{equation}
It is easy to verify that such a point is the one minimizing $\Pi$. 

Since in each subrectangle $[u_i,u_{i+1}]\times [v_j,v_{j+1}]$, ${\cal T}$ is a biquadratic polynomial surface, we can exactly evaluate the integrals appearing in (\ref{cc11}), for instance by using a composite tensor-product Gauss-Legendre quadrature formula with $3\times3$ nodes.

We notice that results similar to the ones of Remark \ref{rru} and Theorem \ref{rr2} also hold for the surface ${\cal T}$, given in (\ref{Suv1tp}). 

\bigskip

ii) The second considered approach consists in the construction of a Gordon-type $C^1$ biquadratic B-spline surface, written as a boolean sum \cite[Chap. 10]{pie}
\begin{equation}\label{Sgor}
{\cal G}(u,v)={\cal L}_1(u,v)+{\cal L}_2(u,v)-{\cal T}_{\cal G}(u,v),
\end{equation}
where ${\cal L}_1$, ${\cal L}_2$ are two skinned surfaces and ${\cal T}_{\cal G}$ is a tensor product surface interpolating a given set of points. More precisely, in our case, given the curve network (\ref{rete_curve}), satisfying the compatibility conditions C1. and C2., such surfaces are defined in the following way:
\begin{itemize}
\item the skinned surface \cite[Chap. 10]{pie}
$$
{\cal L}_1(u,v)=\sum_{i=0}^m\sum_{j=0}^{n+1}C^{{\cal L}_1}_{ij}\bar{B}_i(u|\bar{U})\cdot B_j(v|V),
$$
defined on $\bar{U}\times V$, with  $\bar{U}$ the knot vector obtained from $U$, given in (\ref{vetU}), by the averaging technique \cite[Chap. 9]{pie}, interpolates the curves $\phi_r(v)$, given in (\ref{rete_curve}), at the nodes $u_r$, $r=0,\ldots,m$, i.e. ${\cal L}_1(u_r,v)=\phi_r(v)$;

\item similarly, the skinned surface
$$
{\cal L}_2(u,v)=\sum_{i=0}^{m+1}\sum_{j=0}^n C^{{\cal L}_2}_{ij}B_i(u|U)\cdot\bar{B}_j(v|\bar{V}),
$$
defined on $U\times\bar{V}$, with  $\bar{V}$ the knot vector obtained from $V$, given in (\ref{vetV}), by the averaging technique, interpolates the curves $\psi_s(u)$, given in (\ref{rete_curve}), at the nodes $v_s$, $s=0,\ldots,n$, i.e. ${\cal L}_2(u,v_s)=\psi_s(u)$;

\item the tensor product surface
$$
{\cal T}_{\cal G}(u,v)=\sum_{i=0}^{m}\sum_{j=0}^n C^{{\cal T}_{\cal G}}_{ij}\bar{B}_i(u|\bar{U})\cdot\bar{B}_j(v|\bar{V}),
$$
defined on $\bar{U}\times\bar{V}$, interpolates the points $I_{rs}$, given in (\ref{incroci}), at the nodes $(u_r,v_s)$, $r=0,\ldots,m$, $s=0,\ldots,n$, i.e. ${\cal T}_{\cal G}(u_r,v_s)=I_{rs}$.
\end{itemize}

Following \cite[Chap. 10]{pie}, we give a standard B-spline representation of the surface ${\cal G}(u,v)$, given in (\ref{Sgor}). In order to do it, since the three surfaces ${\cal L}_1$, ${\cal L}_2$ and ${\cal T}_{\cal G}$ are defined by B-spline functions belonging to different spline spaces, we need the three surfaces to be compatible in the B-spline sense, i.e. defined on the same knot vectors. Then, by merging $U,\bar{U}$ and $V,\bar{V}$, we obtain the two knot vectors $\tilde{U}$ and $\tilde{V}$ of length $2m+3$ and $2n+3$, respectively, and we apply the knot refinement algorithm to ${\cal L}_1$, ${\cal L}_2$ and ${\cal T}_{\cal G}$. 

Therefore, we get the new control points $\{\tilde{C}^{{\cal L}_1}_{ij}\}$, $\{\tilde{C}^{{\cal L}_2}_{ij}\}$ and $\{\tilde{C}^{{\cal T}_{\cal G}}_{ij}\}$ of the three surfaces and we can write
$$
{\cal G}(u,v)=\sum_{i=0}^{2m-1}\sum_{j=0}^{2n-1}\tilde{C}_{ij}\tilde{B}_{ij}(u,v),
$$
where $\tilde{B}_{ij}(u,v)=\tilde{B}_i(u|\tilde{U})\cdot\tilde{B}_j(v|\tilde{V})$, with $\tilde{B}_i(u|\tilde{U})$ and $\tilde{B}_j(v|\tilde{V})$ univariate quadratic B-splines defined on the knot partitions $\tilde{U}$ and $\tilde{V}$, respectively, and 
\begin{equation} \label{Ctilde}
\tilde{C}_{ij}=\tilde{C}^{{\cal L}_1}_{ij}+\tilde{C}^{{\cal L}_2}_{ij}-\tilde{C}^{{\cal T}_{\cal G}}_{ij}.
\end{equation}

\bigskip

Now, we propose an example, in order to compare our method with the above ones, from the graphical and numerical points of view.

Given a bidirectional curve network, we generate the three surfaces ${\cal S}$, defined in (\ref{superf}), ${\cal T}$, defined in (\ref{superf_tp}), and ${\cal G}$, defined in (\ref{Sgor}), with $m=n=3$, $\Omega=[0,1]\times[0,1]$, $U=V=\{0,0,0,\frac{1}{3},\frac{2}{3},1,1,1\}$. In this case
$$
\begin{array}{l} 
\bar{U}=\bar{V}=\{0,0,0,\frac{1}{2},1,1,1\}, \\
\tilde{U}=\tilde{V}=\{0,0,0,\frac{1}{3},\frac{1}{2},\frac{2}{3},1,1,1\}. \\
\end{array}
$$

In Fig. \ref{figgg6} we show the curve network and the B-spline surface ${\cal S}$, where $C_{11}=(1,0,3)$ has been arbitrarily chosen, while in Fig. \ref{figgg7} we display the biquadratic B-spline surface ${\cal T}$ with $C^*_{11}=(1.11,-0.15,3)$ obtained by (\ref{cc11}) and the Gordon-type biquadratic B-spline surface ${\cal G}$, where the control points are written as in (\ref{Ctilde}).

\begin{figure}[ht]
\begin{minipage}{60mm} 
\centering\includegraphics[width=6cm]{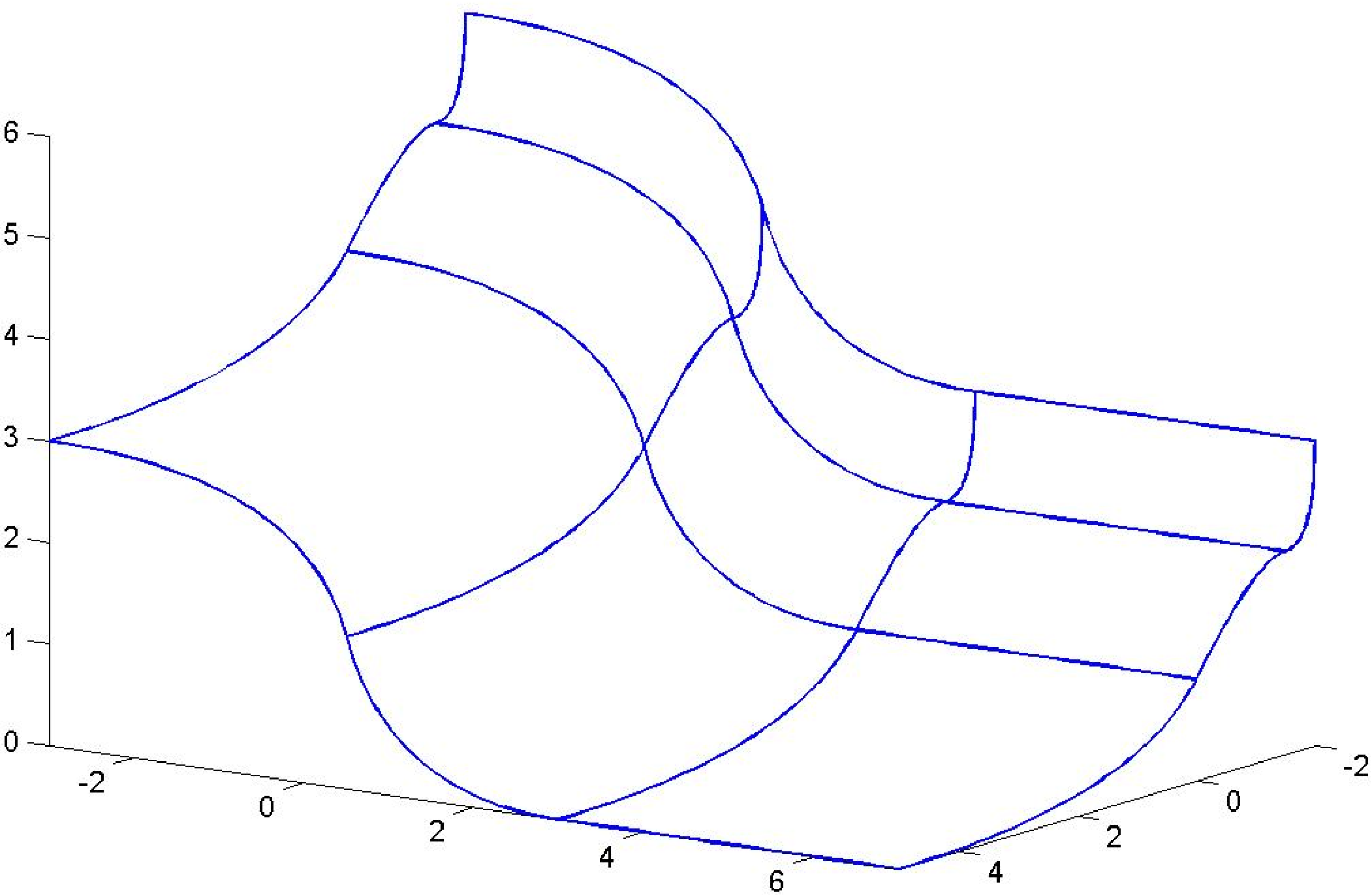}
\centerline{$(a)$}
\end{minipage}
\hfil
\begin{minipage}{60mm} 
\centering\includegraphics[width=6cm]{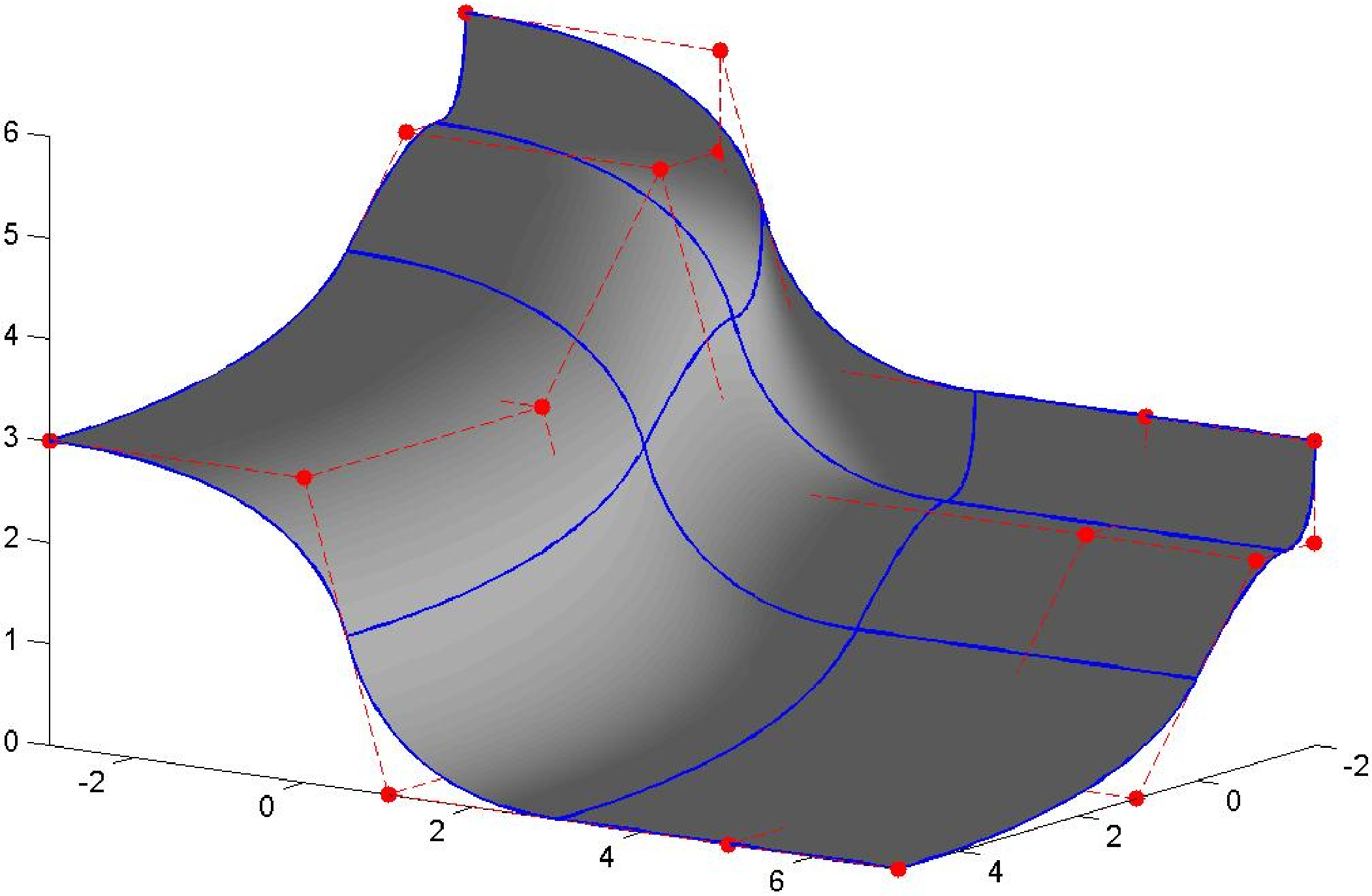}
\centerline{$(b)$}
\end{minipage}
\caption{$(a)$ The curve network and $(b)$ the B-spline surface ${\cal S}$, defined by B-spline functions of $S_{2}^{1}(T_{3,3})$ and interpolating the curve network $(a)$.}
\label{figgg6}
\end{figure}

\begin{figure}[ht]
\begin{minipage}{60mm} 
\centering\includegraphics[width=6cm]{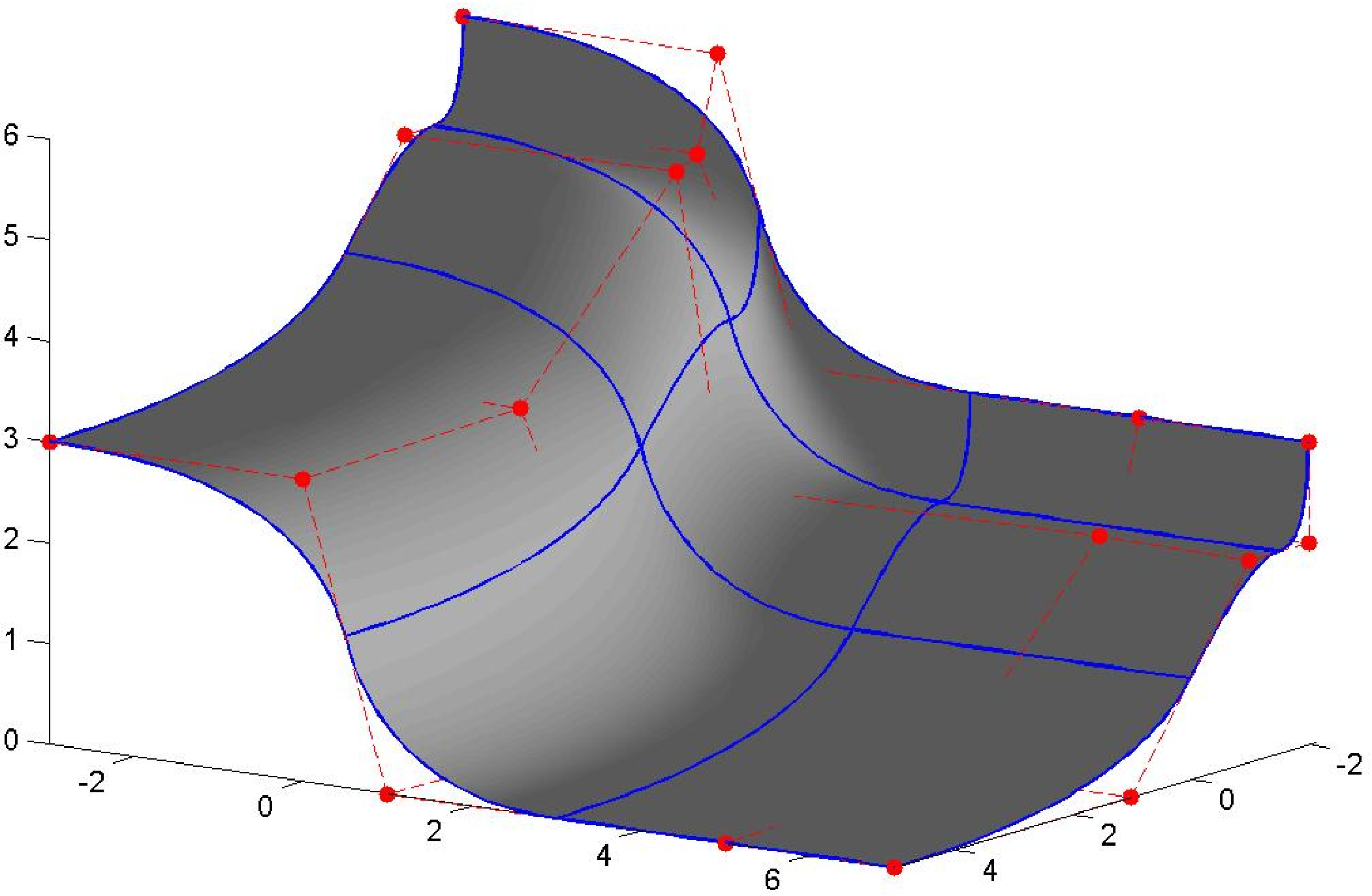}
\centerline{$(c)$}
\end{minipage}
\hfil
\begin{minipage}{60mm} 
\centering\includegraphics[width=6cm]{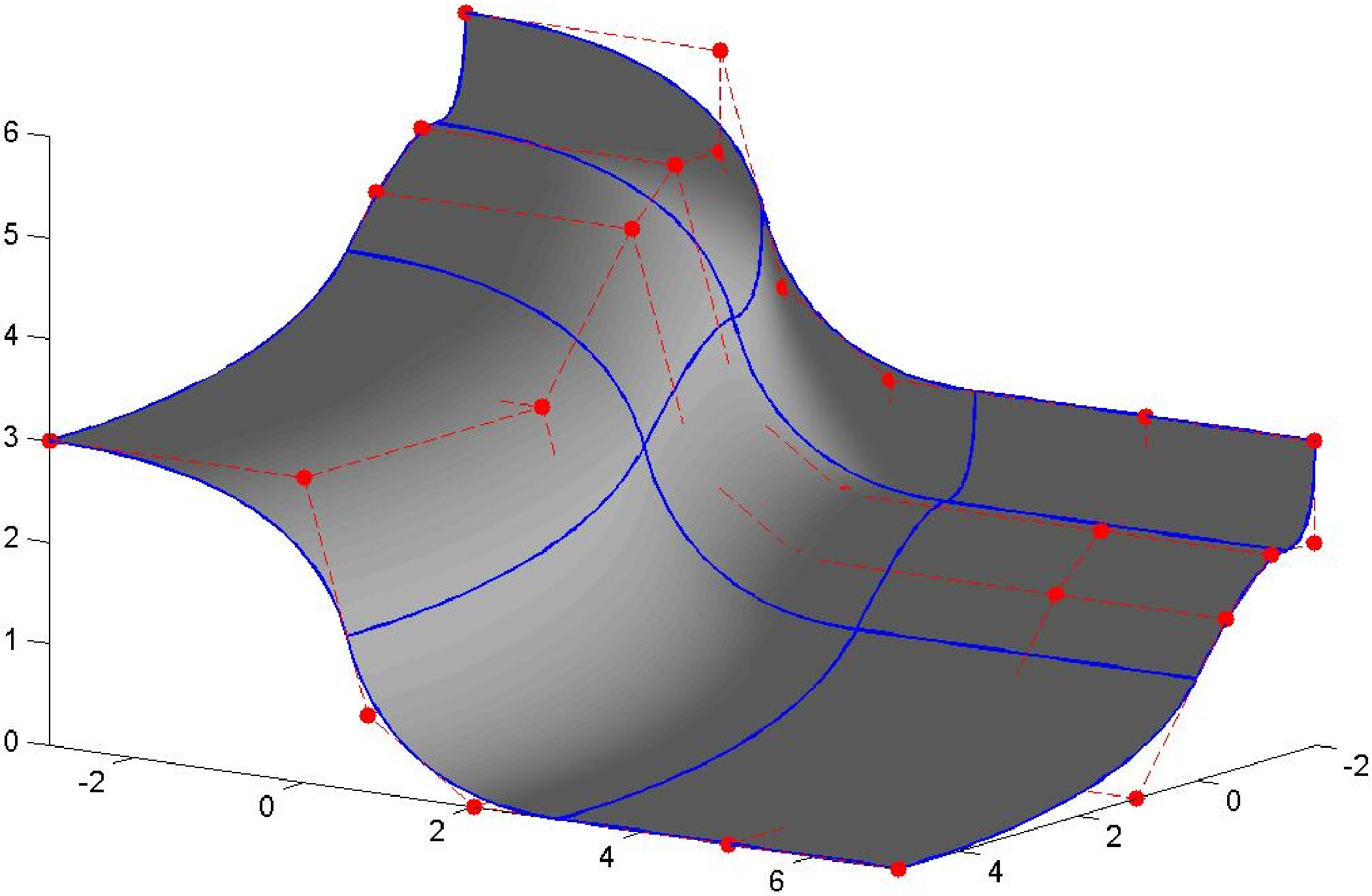}
\centerline{$(d)$}
\end{minipage}
\caption{$(c)$ The surface ${\cal T}$, defined by B-spline functions of $S_{2,2}^{1,1}(R_{3,3})$, and $(d)$ the surface ${\cal G}$, defined by B-spline functions of $S_{2,2}^{1,1}(R_{8,8})$, interpolating the curve network $(a)$ of Fig. \ref{figgg6}.}
\label{figgg7}
\end{figure}
 
We can remark that the shapes of the generated surfaces are comparable from  the graphical point of view. 

Moreover, we have also compared them from a numerical point of view.
Indeed, let G be a $57 \times 57$ uniform rectangular grid of evaluation points in $\Omega$. Then we get
$$
||{\cal S}-{\cal T}||_\infty=\max_{\nu=x,y,z}\{\max_{(t_1,t_2)\in G}|{\cal S}_\nu(t_1,t_2)-{\cal T}_\nu(t_1,t_2)|\}=3.7879(-2),
$$
$$
||{\cal S}-{\cal G}||_\infty=\max_{\nu=x,y,z}\{\max_{(t_1,t_2)\in G}|{\cal S}_\nu(t_1,t_2)-{\cal G}_\nu(t_1,t_2)|\}=4.4409(-15),
$$
$$
||{\cal T}-{\cal G}||_\infty=\max_{\nu=x,y,z}\{\max_{(t_1,t_2)\in G}|{\cal T}_\nu(t_1,t_2)-{\cal G}_\nu(t_1,t_2)|\}=3.7879(-2).
$$

We remark that the surfaces ${\cal S}$ and ${\cal G}$ seem to have the same behaviour.
However, we have to note that the constructions both of ${\cal T}$ and of ${\cal G}$ are computationally more expensive than the one of the surface ${\cal S}$, here proposed. Indeed,  the generation of ${\cal T}$ requires a minimization process to evaluate  $C^*_{11}$ in (\ref{cc11}), while for the generation of ${\cal G}$ we have to construct three different surfaces by using B-splines  belonging to different spline spaces, so that the bivariate resulting surface is not defined on the rectangular grid associated to the knot vectors of the curve network, since knot refinement is required.

\subsection{Curve network with 3 and 4-sided facet}

Here we want to underline that the rectangular topology of the parametric domain $\Omega$ does not constrain the curve network to have a 4-sided facet.

In the following test we interpolate the B-spline curve network shown in Fig. \ref{figg6}$(a)$, where three-sided and four-sided facet are present.

Considering $m=5$, $n=3$, $\Omega=[0,5]\times[0,3]$ and the two knot vectors $U=\{u_i\}_{i=-2}^{7}$, $V=\{v_j\}_{j=-2}^{5}$, as in (\ref{vetU}) and (\ref{vetV}), with $u_i=i$, $i=0,\ldots,5$ and $v_j=j$, $j=0,\ldots,3$, by means of the B-spline functions in $S_{2}^{1}(T_{5,3})$ we construct the interpolating B-spline surface (\ref{superf}) that is shown in Fig. \ref{figg6}$(b)$, with $C_{11}=(1.5,1,1)$ arbitrarily chosen. 

Indeed, if in (\ref{rete_curve}) we assume $Q_i^{(0)}=Q^{(0)}$ and $Q_i^{(3)}=Q^{(3)}$, $i=0,\ldots,6$, then we get
$$
\begin{array}{l} 
\psi_0(u)=\displaystyle\sum_{i=0}^{6}Q_i^{(0)}B_i(u|U)=Q^{(0)}\sum_{i=0}^{6}B_i(u|U)=Q^{(0)},\\
\psi_3(u)=\displaystyle\sum_{i=0}^{6}Q_i^{(3)}B_i(u|U)=Q^{(3)}\sum_{i=0}^{6}B_i(u|U)=Q^{(3)}
\end{array}
$$
with $u\in [0,5]$. This means that all curves $\phi_r(v),\ r=0,\ldots,5,\ v\in [0,3]$ begin at $Q^{(0)}$ and they end at $Q^{(3)}$, i.e. from (\ref{incroci})
$$
\phi_r(v_s)=\psi_s(u_r)=Q^{(s)}
$$
with $s=0,3$ and $r=0,\ldots,5$.

\begin{figure}[ht]
\begin{minipage}{60mm} 
\centering\includegraphics[width=6cm]{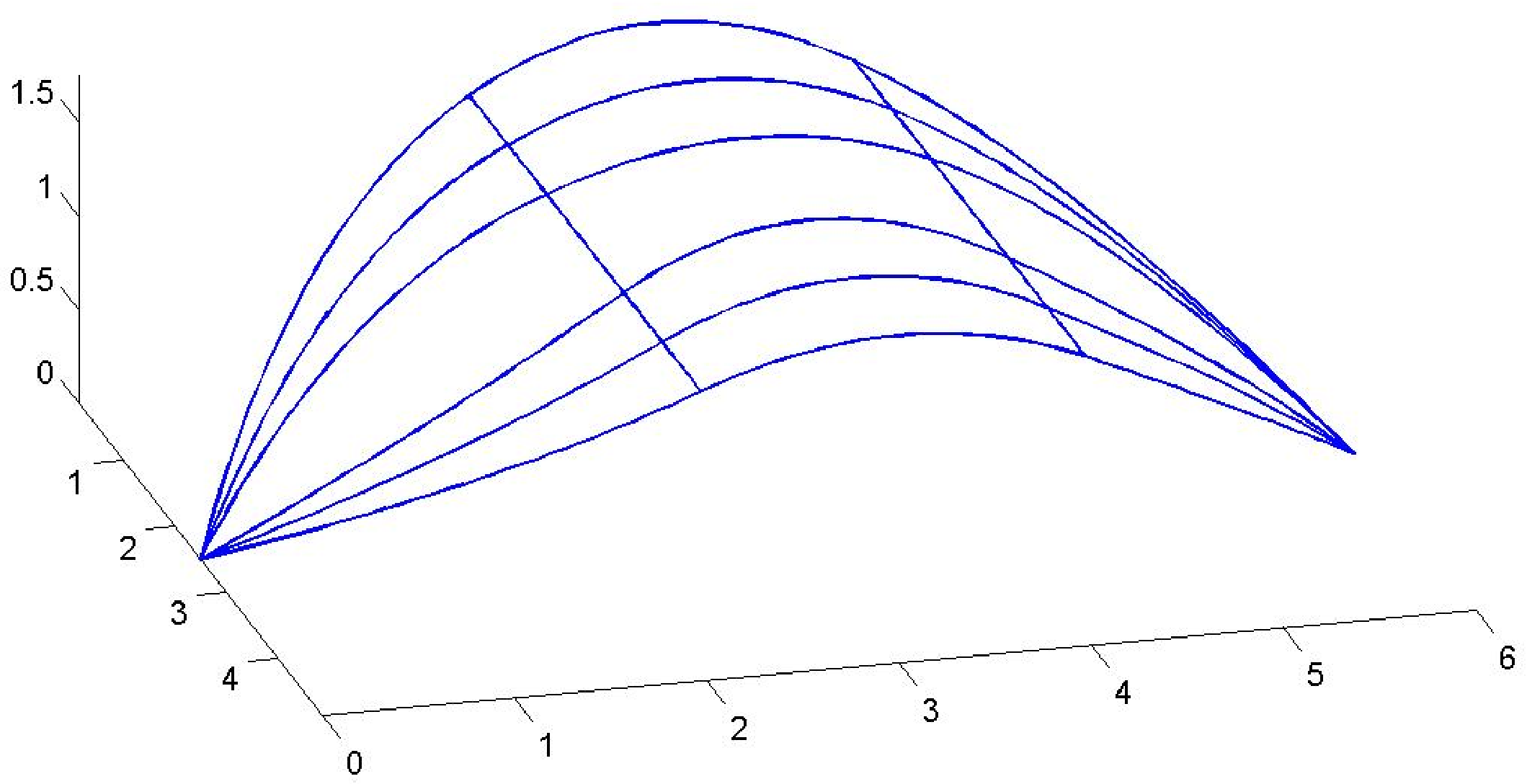}
\centerline{$(a)$}
\end{minipage}
\hfil
\begin{minipage}{60mm} 
\centering\includegraphics[width=6cm]{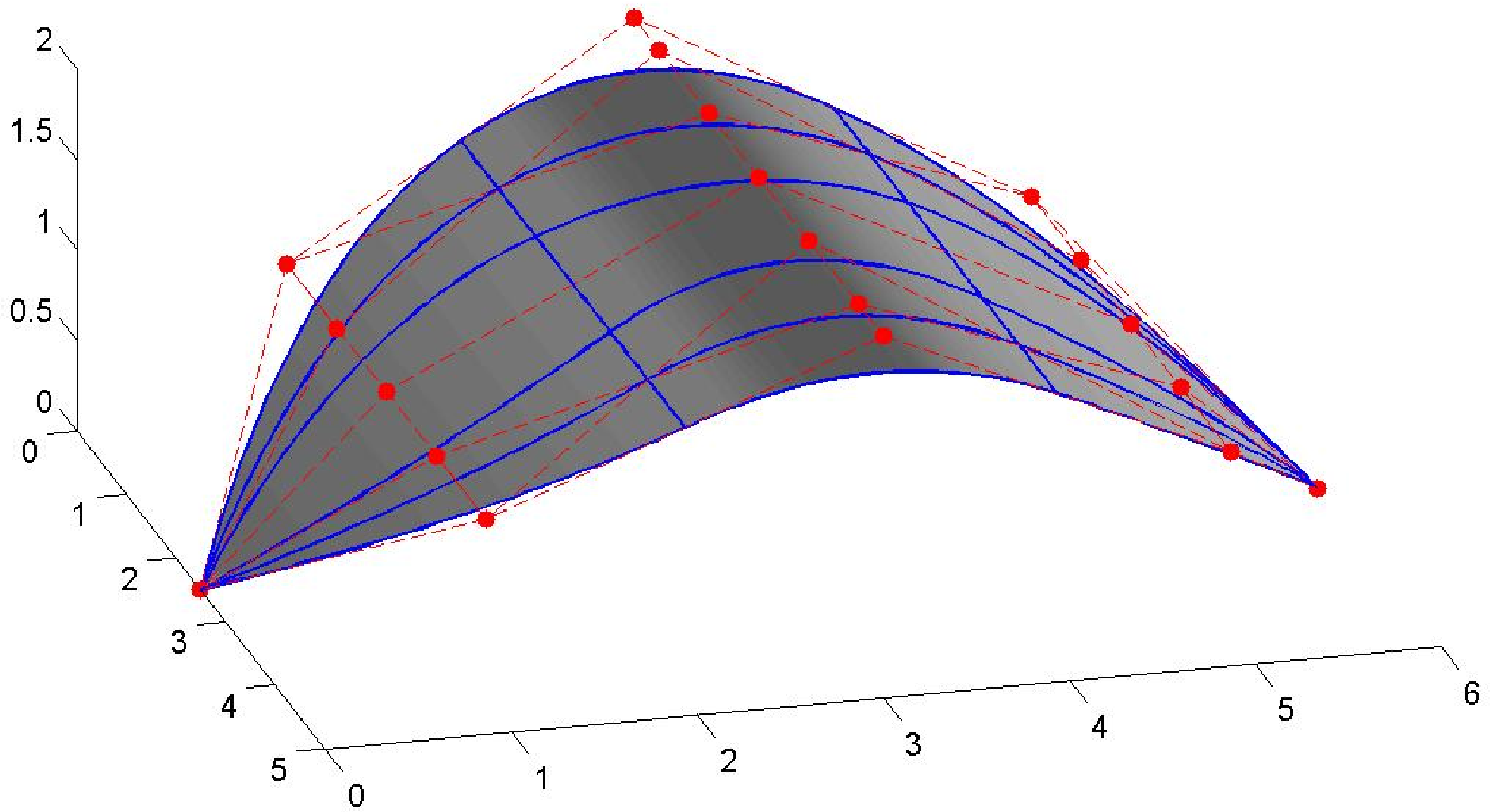}
\centerline{$(b)$}
\end{minipage}
\caption{$(a)$ The curve network and $(b)$ the surface ${\cal S}$ interpolating it.}
\label{figg6}
\end{figure}

In Fig. \ref{figg7} we show another example with  $m=5$, $n=4$, $\Omega=[0,5]\times[0,4]$ and the two knot vectors $U=\{u_i\}_{i=-2}^{7}$, $V=\{v_j\}_{j=-2}^{6}$, as in (\ref{vetU}) and (\ref{vetV}) with $u_i=i$, $i=0,\ldots,5$, $v_j=j$, $j=0,\ldots,4$, and $C_{11}=(0.5,0.5,3.5)$.

\begin{figure}[ht]
\begin{minipage}{60mm} 
\centering\includegraphics[width=4.5cm]{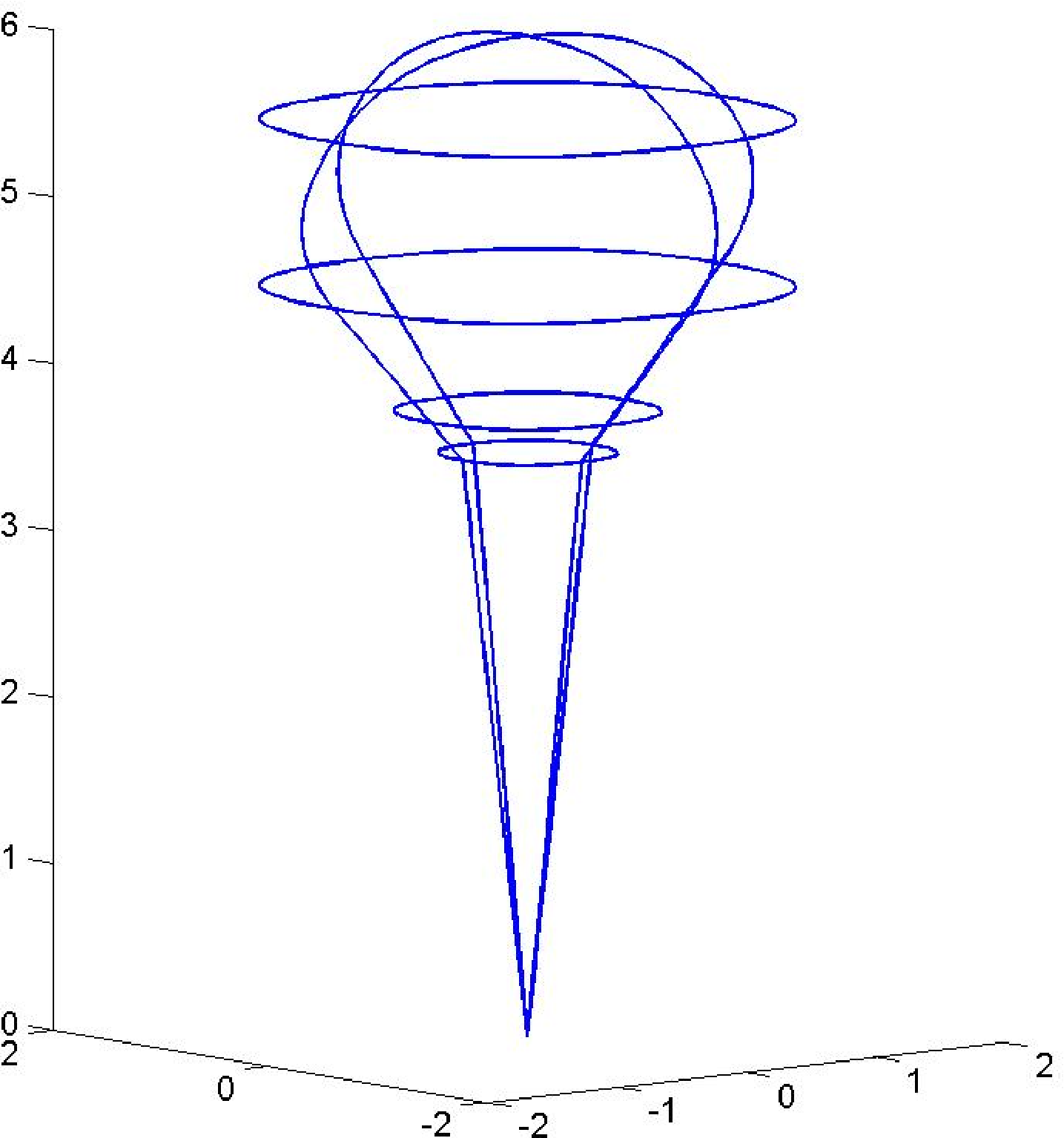}
\centerline{$(a)$}
\end{minipage}
\hfil
\begin{minipage}{60mm} 
\centering\includegraphics[width=8cm]{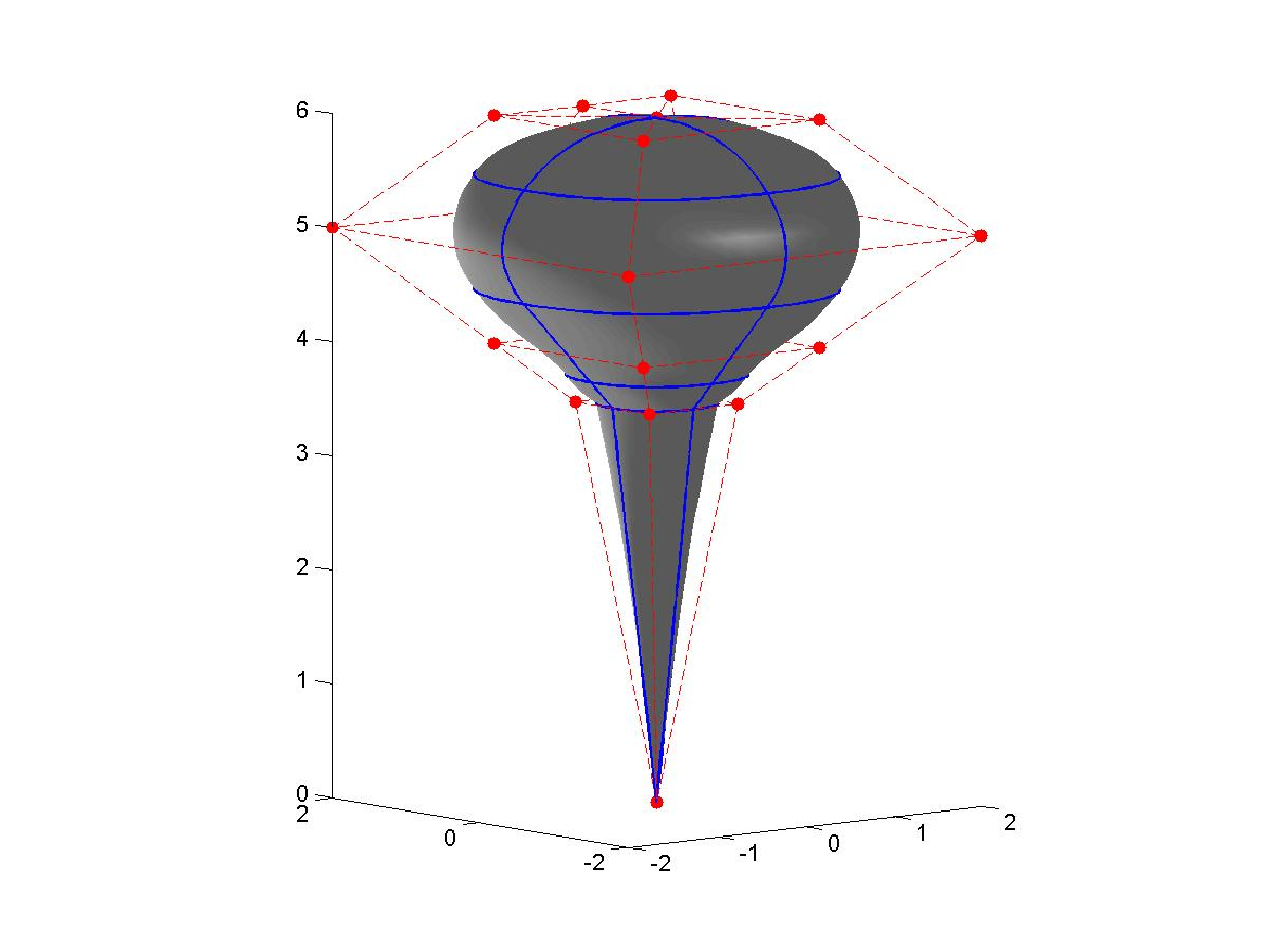}
\centerline{$(b)$}
\end{minipage}
\caption{$(a)$ The curve network and $(b)$ the surface ${\cal S}$ interpolating it.}
\label{figg7}
\end{figure}

\section{Final remarks}

In this paper we have investigated the problem of interpolating a B-spline curve network, in order to create a surface satisfying such a constraint and defined by blending functions spanning the space $S_2^1(T_{mn})$. We have proved the surface existence and uniqueness, providing a constructive algorithm for its generation.

We have also presented numerical and graphical results and comparisons with tensor product B-spline surfaces. 

According to \cite[Chap. 10]{pie}, we have restricted the curves of the network to be nonrational. Theoretically, the construction, discussed in this paper, can be carried out in homogeneous space for rational curves, but it is generally not practical to do so and the three-dimensional results can be unpredictable. If rational curves are involved, it is recommended using constrained approximation techniques to obtain nonrational approximations of the rational curves to within necessary tolerances.

Finally, we remark that an interesting open problem could be the construction of B-spline surfaces interpolating a curve network of the following form
$$
\begin{array}{l} 
\phi_r(v)=\displaystyle\sum_{j=0}^{n+1}P_j^{(r)}B_j(v|V)\quad\quad r=0,\dots,R,\\
\psi_s(u)=\displaystyle\sum_{i=0}^{m+1}Q_i^{(s)}B_i(u|U)\quad\quad s=0,\dots,S,
\end{array}
$$
with $m \geq R$ and $n \geq S$, i.e. not all surface isoparametric curves, corresponding to the knots in $U$ and $V$, are curves of the network. In this case we would have free parameters that could be managed, in order to model the interpolating surface, for example by optimization techniques.

\section*{Acknowledgements} 

The authors thank the University of Torino for its support to their research. Moreover, they are grateful to the anonymous referees for their suggestions and comments that improved the paper.





\end{document}